\documentclass[11pt,a4paper]{article}

\usepackage[utf8x]{inputenc}
\usepackage[T1]{fontenc}
\usepackage[pdftex]{graphicx}
\usepackage[pdftex,linkcolor=black,pdfborder={0 0 0}]{hyperref}
\usepackage{xcolor}
\usepackage{calc}
\usepackage{amsmath,amsfonts,amssymb,amsthm,mathtools}
\usepackage[a4paper, lmargin=0.125\paperwidth, rmargin=0.125\paperwidth, tmargin=0.1111\paperheight, bmargin=0.1111\paperheight]{geometry}
\usepackage{natbib}
\usepackage[all]{nowidow}
\usepackage[protrusion=true,expansion=true]{microtype}
\usepackage{subcaption}
\usepackage{tabularx}
\usepackage{float}
\usepackage{times}
\usepackage{setspace}
\usepackage{enumerate}
\usepackage{booktabs}
\usepackage{multirow}
\usepackage{multicol}
\usepackage{adjustbox}
\usepackage{algorithm}
\usepackage{algpseudocode}
\usepackage{placeins}
\title{Curve Band Depth: A Band-Based Data Depth for Unparameterized Planar Curves}
\author{Siyi Wang, Alexandre Leblanc, Paul D. McNicholas}
\date{}

\newtheorem{definition}{Definition}[section]
\newtheorem{theorem}{Theorem}[section]
\newtheorem{proposition}{Proposition}[section]
\newtheorem{lemma}{Lemma}[section]

\theoremstyle{remark}
\newtheorem{remark}{Remark}[section]

\newcommand{\bbS}{\mathbb{S}}

\newcommand{\R}{\mathbb{R}}
\newcommand{\Cset}{\mathcal{C}}

\newcommand{\Borel}{\mathcal{B}}
\newcommand{\Rband}{\mathcal{R}}
\newcommand{\one}{\mathbf{1}}
\newcommand{\dist}{\operatorname{dist}}

\newcommand{\cl}{\operatorname{cl}}
\newcommand{\arc}{\mathrm{arc}}

\begin{document}
\maketitle

\begin{abstract}
We introduce \emph{curve band depth} (CBD), a band-based data depth for samples of \emph{unparameterized} planar curves.
CBD is motivated by band depth and modified band depth for functional data, but targets trajectory data. Unlike the halfspace-based curve depth of \citet{de2021depth} and the curve stabbing depth of \citet{durocher2023csd}, CBD is defined through a geometric band region generated by two curves, and measures the arc-length proportion of a target curve lying inside such bands.
We develop a CBD family consisting of an integral version (int-CBD), an infimal version (inf-CBD), and a fast-walk variant (FW-CBD). The fast-walk band is a narrower band construction contained in the global convex-combination band.
We establish boundedness, vanishing at infinity, and similarity invariance for these constructions, together with a Borel-measurability result for the induced depth maps under a mild measurability assumption.
A length-penalized variant is proposed for samples with heterogeneous curve lengths.
We implement the methods via arc-length sampling and polygonal approximations, and evaluate them through classification of overlapping handwriting data and MNIST-derived digit curves, online-signature screening on \texttt{MOBISIG}, and an exploratory clustering task based on decomposed band contributions.
\end{abstract}

\section{Introduction}
Data depth provides a center-outward ordering of observations and a natural
framework for robust inference~\citep{mosler2013depth}. In multivariate
analysis, the literature begins with Tukey's halfspace depth~\citep{Tukey1975} and was soon
broadened by several geometric alternatives, including Oja depth~\citep{oja1983descriptive}, simplicial
depth~\citep{liu1990notion}, projection depth~\citep{donoho1992breakdown}, the $L_1$-depth~\citep{vardi2000multivariate}, and spatial depth~\citep{serfling2002depth}; general notions of statistical depth are formalized in \citet{zuo2000general}.

These ideas were subsequently extended beyond point clouds to more complex
objects. For functional data, a large literature now exists, including
integral-type constructions such as the integrated depth of
\citet{nagy2016integrated} and the modified band depth of
\citet{lopez2009concept}, as well as infimal-type constructions, such as $\Phi$-depth~\citet{mosler2013depth}; see also
\citet{cuevas2007robust,mosler2012general}.

In \emph{curve data} problems, observations are planar or spatial trajectories, and parameterization is typically an artifact: a trajectory can be traced at varying speeds, in either direction, or (if closed) from any starting point without changing its essential geometric character.
Recent work has therefore developed depth notions for unparameterized curves.
The \emph{curve depth} of \citet{de2021depth} and the \emph{curve stabbing depth} of \citet{durocher2023csd} extend halfspace depth to unparameterized curves, in both cases by integrating a local centrality score along arc length (we give precise definitions in Section~\ref{sec:related_work}).
In the functional-depth taxonomy both constructions belong to the integral family.

This paper takes a different route: a \emph{band-based} depth for unparameterized curves, inspired by band depth (BD) and modified band depth (MBD) for functional data.
Two structural reasons motivate the new construction.
First, functional BD and MBD compare scalar values at a common ordered domain index, so they require a shared parameterization across the sample; for unparameterized planar curves there is no such shared index because the curves are identified up to monotone reparameterization, and so functional BD does not transfer directly.
Second, the curve depths of \citet{de2021depth,durocher2023csd} extend halfspace and ray notions of centrality: they answer ``how separable is this curve from the rest by a halfspace?'' rather than ``how much of this curve lies inside the geometric region between two other curves?''.
The latter is a band-occupancy notion, and is what BD and MBD capture in the functional setting.
We therefore define a geometric ``band region'' between two curves as a parameterization-free subset of the plane, and measure how much of a target curve lies in that band along arc length.
The resulting family of depths admits both an integral form (averaging in-band proportions) and an infimal form (requiring full inclusion), parallel to the MBD/BD distinction.

\subsection{Contributions}
Our contributions are as follows.
\begin{enumerate}[(1)]
  \item We define \emph{integral CBD} (int-CBD) and \emph{infimal CBD} (inf-CBD), and formalize a \emph{CBD-family} template that covers both.
  \item We propose \emph{fast-walk CBD} (FW-CBD), a CBD-family member whose band is the planar region enclosed by a closed walk along the two generating curves. 
  \item We prove boundedness, vanishing at infinity, similarity invariance, and conditional Borel-measurability for CBD and FW-CBD.
  \item We describe an implementation of CBD and FW-CBD and report experiments on overlapping handwriting (\texttt{mrfDepth} ``a'' vs.\ ``i''), MNIST-derived digit curves (1 vs.\ 7), and the \texttt{MOBISIG} online-signature-screening task. We also briefly examine their potential use for clustering by reusing decomposed band contributions on character trajectories.
\end{enumerate}

The rest of the paper is organized as follows.
Section~\ref{sec:related_work} reviews functional and curve depth background and introduces the curve space and arc-length measure used in subsequent sections.
Section~\ref{sec:cbd_def} defines the proposed band constructions and the CBD family.
Section~\ref{sec:properties} establishes their main theoretical properties.
Section~\ref{sec:implementation} discusses implementation details of proposed depths.
Section~\ref{sec:experiments} reports numerical illustrations and empirical results.
Section~\ref{sec:discussion} concludes with a brief discussion of limitations and directions for future work.

\section{Background and related work}
\label{sec:related_work}
\subsection{Functional depths: integral vs infimal}
Functional depths fall into two groups. \emph{Integral} constructions integrate a pointwise depth along the domain, for example the integrated depth of \citet{nagy2016integrated} and the modified band depth (MBD) of \citet{lopez2009concept}. \emph{Infimal} constructions instead impose a global inclusion criterion.
Band depth (BD) counts the proportion of bands formed by subsets of sample functions that contain the entire function; MBD replaces full inclusion by the proportion of the domain where the function lies within the band.
This integral/infimal split motivates the two curve-band depths we introduce.
Related multivariate-functional constructions include the simplicial band depth of \citet{lopez2014simplicial}, the multivariate functional halfspace depth of \citet{claeskens2014multivariate}, the half-region depth of \citet{lopezpintado2011halfregion}, and the fast exact BD/MBD algorithms of \citet{sun2012exact}. General treatments are given by \citep{mosler2012general,nagy2016integrated},
with \citet{nagy2017depth} addressing depth-based detection of shape outlying
functions and \citet{nagy2024depth} examining the choice of depth for
constructing functional boxplots.
\subsection{Depth for curve data}
The curve depth of \citet{de2021depth} extends Tukey's halfspace depth~\citet{Tukey1975} to unparameterized curves by comparing (i) the probability that a random curve intersects a halfspace with (ii) the arc-length proportion of the target curve lying in that halfspace, and integrating the result along the curve.
Curve stabbing depth \citep{durocher2023csd} defines depth via stabbing counts of rays intersecting sample curves, and integrates along the target curve.
Both methods are parameterization-invariant, but they are halfspace-based and do not produce a band-region depth in the ambient plane.

Band-based notions for unparameterized curves are less developed. CBD defines bands directly as subsets of the ambient plane through the convex-combination construction of Section~\ref{sec:cbd_def}, and weights inclusion by arc length. This transfers the BD/MBD idea to unparameterized planar curves.
\subsection{Curve space and arc-length measure}
\label{sec:curve_space}
We work with parameterized rectifiable planar curves modulo monotone reparameterization.

\paragraph{Parameterized rectifiable curves.}
For an interval domain $I\in\{[0,1],\bbS^1\}$ where $\bbS^1=[0,1]/\{0\sim 1\}$, let $\mathcal F(I)$ denote the set of all continuous maps $f:I\to\R^2$ whose total variation
\[
L(f)=\sup_{0=\tau_0<\cdots<\tau_J=1}\sum_{j=1}^J \|f(\tau_j)-f(\tau_{j-1})\|_2
\]
is finite (rectifiability) and whose arc-length map $s_f(t):=L(f|_{[0,t]})$ is strictly increasing (no stationary intervals); for the closed-domain case $I=\bbS^1$ we identify $f$ with any continuous lift to $[0,1]$ satisfying $f(0)=f(1)$ and the same strict-increase condition.
The strict-increase assumption excludes parameterizations that are constant on a positive-length sub-interval and ensures every $f$ admits a continuous arc-length reparameterization \citep[Theorem 2.7.6]{burago2001course}.
We call any such $f$ a \emph{parameterized rectifiable curve}; its \emph{trace} is the image set $\Gamma_f=f(I)\subseteq\R^2$.
The choice of domain encodes whether the curve is open ($I=[0,1]$) or closed ($I=\bbS^1$).

\paragraph{Equivalence by monotone reparameterization.}
For $f_1,f_2\in\mathcal F(I)$, write $f_1\sim f_2$ if there exists an orientation-preserving or orientation-reversing homeomorphism $\phi:I\to I$ with $f_1=f_2\circ\phi$.
On $I=[0,1]$ the homeomorphism is required to satisfy $\phi(0)=0,\phi(1)=1$ (monotone increasing) or $\phi(0)=1,\phi(1)=0$ (monotone decreasing).
On $I=\bbS^1$ it is required to be either an orientation-preserving homeomorphism (which includes all rotations $s\mapsto s+\theta\bmod 1$) or an orientation-reversing one.
The relation $\sim$ is an equivalence on $\mathcal F(I)$, and the \emph{curve space} is the disjoint union of quotients
\[
\Cset \;:=\; \bigl(\mathcal F([0,1])/\!\sim\bigr) \;\sqcup\; \bigl(\mathcal F(\bbS^1)/\!\sim\bigr).
\]
We write $C\in\Cset$ and refer to $f\in C$ as a \emph{representative} of $C$.

\begin{remark}[Properties intrinsic to $C\in\Cset$]
\label{rmk:monotone_reparam}
The equivalence $\sim$ identifies parameterizations that differ only by monotone time-warping, orientation reversal, and (for closed curves) choice of starting point, but distinguishes parameterizations that retrace portions of the trace.
Three consequences hold for every $C\in\Cset$:
(i) the length $L(C):=L(f)$ is independent of the representative $f\in C$ and equals the total variation of any representative;
(ii) the open/closed property (the choice of domain $I$) is intrinsic to $C$;
(iii) the trace $\Gamma_C=\Gamma_f\subseteq\R^2$ is intrinsic, but $\Cset$ is strictly finer than the quotient by trace-equivalence, e.g., a Y-shaped trace admits multiple non-equivalent parameterized curves in $\Cset$, distinguished by which arm is the start and which is the end.
\end{remark}

\paragraph{Arc-length parameterization and arc-length measure.}
For each $C\in\Cset$ with $L(C)>0$, there is a unique \emph{arc-length representative}
\[
\beta_C^{\arc} : [0,L(C)] \to \R^2 \;\;\text{(open case)}, \qquad
\beta_C^{\arc} : [0,L(C)]/\{0\sim L(C)\} \to \R^2 \;\;\text{(closed case)},
\]
satisfying $|(\beta_C^{\arc})'|=1$ almost everywhere, unique up to orientation reversal in the open case, and up to both circular shift and orientation reversal in the closed case.
Existence and uniqueness on the strict-increase class introduced above are standard: the arc-length map $s_f$ is a homeomorphism $[0,1]\to[0,L(f)]$, and $\beta_C^{\arc}=f\circ s_f^{-1}$ is well-defined on $[0,L(C)]$ for any representative $f$ of $C$, with the listed indeterminacy.
The arc-length probability measure on $\R^2$ is defined by
\begin{equation}\label{eq: arc-length}
    \mu_C(B):=\frac{1}{L(C)}\int_0^{L(C)} \one_B(\beta_C^{\arc}(s))\,ds, \qquad B\in\Borel(\R^2),
\end{equation}
where $\mathcal B(\R^2)$ denotes the Borel $\sigma$-algebra on $\R^2$ and the resulting measure depends only on $C$.
Indeed, suppose $\widetilde\beta_C^{\arc}$ is a second arc-length representative of $C$.
By uniqueness of arc-length parameterization (up to the listed indeterminacy), one of four cases holds:
(0)~$\widetilde\beta_C^{\arc}=\beta_C^{\arc}$ (identity; equality of integrals is immediate),
(i)~$\widetilde\beta_C^{\arc}(s)=\beta_C^{\arc}(L(C)-s)$ (orientation reversal, open or closed),
(ii)~$\widetilde\beta_C^{\arc}(s)=\beta_C^{\arc}(s+s_0\bmod L(C))$ for some $s_0\in[0,L(C))$ (circular shift, closed case),
(iii)~$\widetilde\beta_C^{\arc}(s)=\beta_C^{\arc}(s_0-s\bmod L(C))$ for some $s_0\in[0,L(C))$ (shifted reversal, closed case).
In case~(i), the substitution $u=L(C)-s$ gives $\int_0^{L(C)}\one_B(\widetilde\beta_C^{\arc}(s))\,ds = \int_0^{L(C)}\one_B(\beta_C^{\arc}(u))\,du$.
In case~(ii), periodicity of $\beta_C^{\arc}$ and invariance of Lebesgue measure under $s\mapsto s+s_0$ give the same equality.
In case~(iii), the substitution $u=s_0-s\bmod L(C)$ gives a measure-preserving map (Lebesgue measure on $[0,L(C))$ is invariant under $s\mapsto s_0-s\bmod L(C)$), so again $\int_0^{L(C)}\one_B(\widetilde\beta_C^{\arc}(s))\,ds = \int_0^{L(C)}\one_B(\beta_C^{\arc}(u))\,du$.
Hence $\mu_C$ is independent of which arc-length representative is used.

\paragraph{Identification convention for set operations.}
Whenever an expression involves set-theoretic operations on a curve $C\in\Cset$, such as $C\cap B$, $\dist(C,K)$, or $C\subseteq R$ for $R\subseteq\R^2$, we identify $C$ with its trace $\Gamma_C\subseteq\R^2$.
This is unambiguous because $\Gamma_C$ is intrinsic to $C$ (Remark~\ref{rmk:monotone_reparam}).

\paragraph{Borel structure on $\Cset$.}
\label{par:borel_C}
For measurability purposes we represent each $C\in\Cset$ by its length together with its normalized arc-length representative, understood modulo the intrinsic indeterminacies of arc-length parameterization, i.e., orientation reversal in the open case, and circular shift and reversal in the closed case.
Formally, let $\Phi$ send $C$ to its equivalence class $[L(C),\,\beta_C^{\arc}(L(C)\,\cdot)]$ in $(0,\infty)\times C(I,\R^2)/\!\sim_{\mathrm{arc}}$, where $\sim_{\mathrm{arc}}$ identifies representatives that differ by the above indeterminacies, and $C(I,\R^2)$ carries the supremum metric.
Equip $\Cset$ with the smallest $\sigma$-algebra making $\Phi$ measurable.
This makes $L(\cdot)$ and $\Gamma_{(\cdot)}$ (as a set-valued map into the hyperspace of compact subsets of $\R^2$ with the Hausdorff metric) Borel-measurable~\citep{molchanov2017theory}; any functional of $C$ that depends only on the trace and length, such as $\mu_C(B)$ for fixed Borel $B$, inherits measurability through $\Phi$.

\begin{remark}[Degenerate and non-atomic cases]\label{rmk:degenerate_nonatomic}
When $L(C)=0$ ($C$ is a degenerate point-curve), set $\mu_C$ to the Dirac measure at $\Gamma_C$.
The remainder of the paper restricts attention to $C\in\Cset$ with $L(C)>0$, and assumes $\mu_C$ is non-atomic (which holds whenever $\beta_C$ is not constant on a positive-Lebesgue-measure subset of $[0,L(C)]$).
\end{remark}

\subsection{Notation summary}
Throughout the paper, we use the following notations.
\begin{center}
\begin{tabular}{ll}
\hline
$\Cset$ & curve space \\
$L(C)$ & arc length of $C\in\Cset$ \\
$\Gamma_C$ & trace (image set) of $C$ in $\R^2$ \\
$\mu_C$ & arc-length probability measure of $C$ on $\R^2$ \\
$\beta_C^{\arc}$ & arc-length representative on $[0,L(C)]$ (or its circle analogue) \\
$\beta_C$ & normalized constant-speed representative on $[0,1]$ (or $\bbS^1$) \\
$\Rband(A,B)$ & global (convex-combination) band of $(A,B)$, Equation~\eqref{eq:global_band} \\
$R_{\mathrm{FW}}(A,B)$ & fast-walk band of $(A,B)$, Equation~\eqref{eq:fw-band} \\
$\widehat\Rband$ & generic band when statements apply to either construction \\
$D_{\mathrm{int}},D_{\mathrm{inf}}$ & integral and infimal CBD with the global band \\
$D_{\mathrm{FW}},D_{\mathrm{FW}}^{\inf}$ & integral and infimal CBD with the FW band \\
$\tilde\ell$ & sample median of $\{L(C_i)\}_{i=1}^n$ \\
$\pi(C)$ & length penalty, $\min\{1,L(C)/\tilde\ell\}$ \\
\hline
\end{tabular}
\end{center}

\section{Curve band regions and Curve Band Depth}
\label{sec:cbd_def}
\subsection{Band region}
Given $A,B\in\Cset$, define the (global) band region
\begin{equation}
\label{eq:global_band}
\Rband(A,B)=\{(1-\lambda)x+\lambda y:\ x\in A,\ y\in B,\ \lambda\in[0,1]\}.
\end{equation}
Equivalently, $\Rband(A,B)$ is the union of all straight line segments joining a point on $A$ to a point on $B$; it depends only on the traces of $A$ and $B$, hence is parameterization-invariant.

Figure~\ref{fig:cbd_band} illustrates the global band under CBD as well as the corresponding FW-CBD band (which will be introduced in Section~\ref{sec:fwcbd}) on the same pair of curves. In this example, the FW band is visibly a subset of the global convex-combination band.

\begin{figure}[h]
    \centering
    \begin{subfigure}[b]{0.45\textwidth}
        \centering
        \includegraphics[width=\textwidth]{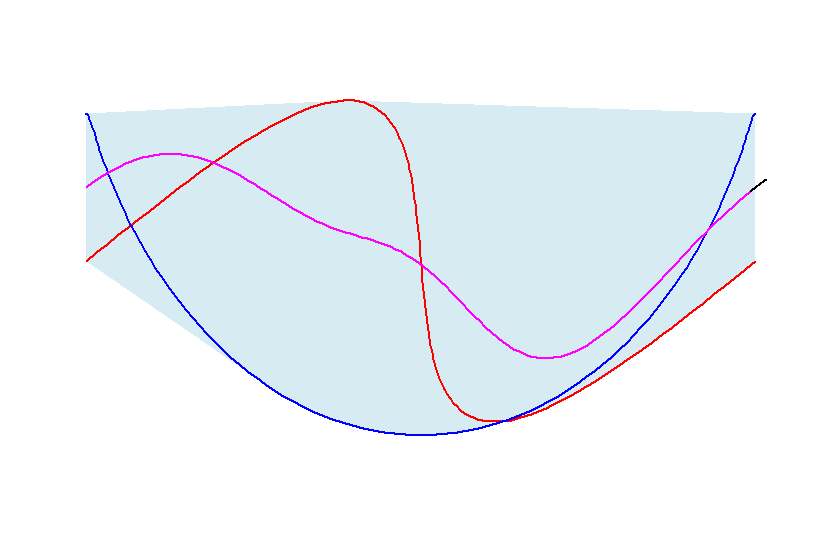}
        \caption{Global band $\Rband(A,B)$.}
    \end{subfigure}
    \hfill
    \begin{subfigure}[b]{0.45\textwidth}
        \centering
        \includegraphics[width=\textwidth]{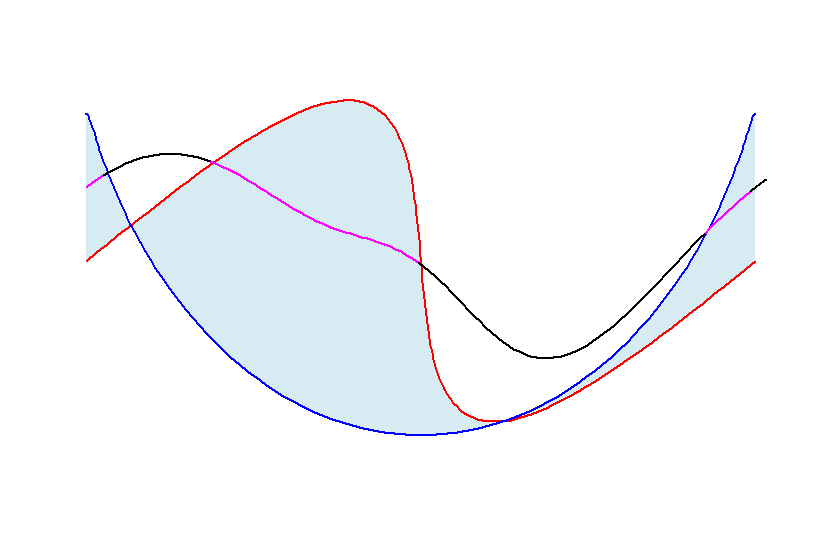}
        \caption{FW band $R_{\mathrm{FW}}(A,B)$.}
    \end{subfigure}
    \caption{The global band of \eqref{eq:global_band} and the FW band of \eqref{eq:fw-band} for the same pair of generating curves $A,B$.}
    \label{fig:cbd_band}
\end{figure}
For any $C\in\Cset$ and band region $\Rband(A,B)$, define in-band proportion as
\begin{equation}
\label{eq:inband}
P(C\mid\Rband(A,B)) := \mu_C(\Rband(A,B)) = \frac{1}{L(C)}\int_C \one_{\Rband(A,B)}(s)\,ds.
\end{equation}
\subsection{Integral and infimal CBD}
Let $\{C_1,\dots,C_n\}\subset\Cset$ be a sample with $n\ge 2$.

\begin{definition}[Integral and infimal curve band depth]
\label{def:int_inf_cbd}
Fix a band construction $\Rband_b(\cdot,\cdot)$. For any target curve $C\in\Cset$, define
\begin{align}
\text{(integral CBD)}\qquad
D_{\mathrm{int}}^{(b)}(C\mid\{C_i\})
&:=
\binom{n}{2}^{-1}
\sum_{1\le i<j\le n}
P\bigl(C\mid \Rband_b(C_i,C_j)\bigr),
\label{eq:int_cbd}
\\
\text{(infimal CBD)}\qquad
D_{\mathrm{inf}}^{(b)}(C\mid\{C_i\})
&:=
\binom{n}{2}^{-1}
\sum_{1\le i<j\le n}
\one\!\left\{\mu_C\bigl(\Rband_b(C_i,C_j)\bigr)=1\right\}.
\label{eq:inf_cbd}
\end{align}
When $\Rband_b$ is the global band $\Rband$ of \eqref{eq:global_band}, we write
$D_{\mathrm{int}}$ and $D_{\mathrm{inf}}$ and refer to them as CBD.
When $\Rband_b$ is the FW band $R_{\mathrm{FW}}$ of Section~\ref{sec:fwcbd}, we write
$D_{\mathrm{FW}}$ and $D_{\mathrm{FW}}^{\inf}$ and refer to them as FW-CBD.
Unless stated otherwise, \emph{CBD} means $D_{\mathrm{int}}$ with the global band.
\end{definition}
\begin{remark}[Why infimal CBD is usually too strict]
The indicator in \eqref{eq:inf_cbd} requires \emph{full} containment of $C$ in the band.
For complex or noisy curves, even a small excursion outside the band makes the indicator zero.
As in functional BD, this can produce many depth values near 0.
Moreover, implementations rely on sampling points along curves, so deciding ``$\mu_C(\Rband)=1$'' is sensitive to discretization.
For these reasons, the present paper focuses on integral CBD as the primary depth.
\end{remark}
\begin{remark}[Population versions]
\label{rmk:population}
If $P$ is a distribution on $\Cset$ and $A,B\stackrel{iid}{\sim}P$, the population integral CBD is
\[
D_{\mathrm{int}}(C\mid P) := \mathbb{E}\big[\mu_C(\Rband(A,B))\big],
\]
and the sample integral CBD in Definition~\ref{def:int_inf_cbd} is the standard $U$-statistic estimator of $D_{\mathrm{int}}(C\mid P)$.
Analogous definitions hold for $D_{\mathrm{inf}}(\cdot\mid P)$ and for the FW versions.
The asymptotic theory of these $U$-statistic estimators (consistency, asymptotic normality, depth-rank stability) is a natural extension of our results and is left as future work.
\end{remark}

\subsection{Fast-walk band region and fast-walk curve band depth}\label{sec:fwcbd}
We now introduce a band construction that is generically a strict subset of the global band of \eqref{eq:global_band}. The fast-walk band is the planar region enclosed by a closed walk formed from the two curves and two closure chords (boundary included). Throughout this subsection curves are assumed rectifiable.
\begin{definition}[Normalized representatives]
\label{def:normapara}
Let $C\in\Cset$ with $L(C)>0$ and let $\beta_C^{\arc}$ be its arc-length representative on $[0,L(C)]$ or $[0,L(C)]/\{0\sim L(C)\}$.
The \emph{normalized representative} is the rescaling
\[
\beta_C(t):=\beta_C^{\arc}\bigl(L(C)\,t\bigr),
\]
defined on $[0,1]$ or $\bbS^1=[0,1]/\{0\sim 1\}$; $\beta_C$ has constant speed $|\beta_C'|=L(C)$ almost everywhere.

If $C$ is open, set
\[
\beta_C^\leftarrow(t):=\beta_C(1-t),\qquad t\in[0,1],
\qquad
\mathcal G(C):=\{\beta_C,\beta_C^\leftarrow\}.
\]

If $C$ is closed, identify $\bbS^1\cong[0,1]/(0\sim1)$ and for $\theta\in\bbS^1$ define the circular shift and its reversal:
\[
\beta_C^\theta(t):=\beta_C(t+\theta \!\!\!\pmod 1),\qquad
(\beta_C^\theta)^\leftarrow(t):=\beta_C(\theta-t \!\!\!\pmod 1),
\qquad
\mathcal G(C):=\bigl\{\beta_C^\theta,\;(\beta_C^\theta)^\leftarrow:\theta\in\bbS^1\bigr\}.
\]
\end{definition}

By Section~\ref{sec:curve_space}, $\beta_C^{\arc}$ is unique up to orientation reversal in the open case and up to circular shift and reversal in the closed
case. Therefore $\beta_C$ inherits the same uniqueness, and $\mathcal G(C)$ depends only on $C\in\Cset$.

\begin{definition}[Fast-walk and enclosed region]\label{def:fwb}
Let $A$ and $B$ be two rectifiable planar curves, and fix
$\alpha\in\mathcal G(A)$ and $\beta\in\mathcal G(B)$.
Define the associated \emph{fast-walk} as the parameterized closed loop
$\gamma_{\alpha,\beta}:\bbS^1\to\R^2$ obtained by concatenating, in order, the four pieces
\begin{equation}
\label{eq:fw-walk}
\alpha\text{ on }[0,1]\;\to\;[\alpha(1),\beta(1)]\;\to\;\beta\text{ in reverse on }[0,1]\;\to\;[\beta(0),\alpha(0)],
\end{equation}
each rescaled to a quarter of $\bbS^1$, so that $\gamma_{\alpha,\beta}$ is continuous and closed.
The boundary trace is the image set
\[
W(\alpha,\beta)
:=\gamma_{\alpha,\beta}(\bbS^1)
=
\alpha([0,1])
\cup [\alpha(1),\beta(1)]
\cup \beta([0,1])
\cup [\beta(0),\alpha(0)],
\]
where $[x,y]$ denotes the closed line segment joining $x$ and $y$.

For every $p\in\R^2\setminus W(\alpha,\beta)$, the \emph{mod-$2$ winding number} of $\gamma_{\alpha,\beta}$ around $p$ is defined as
\begin{equation}
\label{eq:n2-degree}
n_2(\gamma_{\alpha,\beta};p)\;:=\;\deg\!\left(t\mapsto\frac{\gamma_{\alpha,\beta}(t)-p}{\|\gamma_{\alpha,\beta}(t)-p\|}\right)\pmod 2\;\in\;\{0,1\},
\end{equation}
where the right-hand side is the integer Brouwer degree of the indicated continuous map $\bbS^1\to\bbS^1$, reduced modulo $2$.
For polygonal or piecewise-$C^1$ loops in general position with respect to a generic ray from $p$, $n_2(\gamma_{\alpha,\beta};p)$ coincides with the parity of the number of ray-loop intersections.
The enclosed region associated with $(\alpha,\beta)$ is
\begin{equation}
\label{eq:fw-enclosed-region}
E(\alpha,\beta)
:=
W(\alpha,\beta)\cup
\bigl\{\,p\in\R^2\setminus W(\alpha,\beta):\;n_2(\gamma_{\alpha,\beta};p)=1\,\bigr\}.
\end{equation}
That is, $E(\alpha,\beta)$ is the boundary trace $W(\alpha,\beta)$ together with the parity-one complement components of $\R^2\setminus W(\alpha,\beta)$.
\end{definition}

\begin{definition}[Closure score and fast-walk band region] \label{def:closure}
For $\alpha\in\mathcal G(A)$ and $\beta\in\mathcal G(B)$, define the closure score
\[
\Delta(\alpha,\beta)
:=
\|\alpha(1)-\beta(1)\|_2+\|\alpha(0)-\beta(0)\|_2.
\]
Let
\[
\mathcal M(A,B)
:=
\arg\min_{\alpha\in\mathcal G(A),\,\beta\in\mathcal G(B)}
\Delta(\alpha,\beta)
\]
be the set of all closure-minimizing admissible pairs.
The \emph{fast-walk band region} of $A$ and $B$ is then defined by
\begin{equation}
\label{eq:fw-band}
R_{\mathrm{FW}}(A,B)
:=
\cl\!\Bigl(\bigcup_{(\alpha,\beta)\in\mathcal M(A,B)} E(\alpha,\beta)\Bigr).
\end{equation}
\end{definition}

For open curves, the above definition reduces to choosing the better of the two possible endpoint matchings. For closed curves, optimization over $\mathcal G(A)$ and $\mathcal G(B)$ automatically optimizes over all possible starting points; no additional starting-point convention is required.
If several minimizing admissible pairs exist, the definition \eqref{eq:fw-band} takes the closure of the union of the corresponding enclosed regions.

The following lemma is the key structural fact behind FW-CBD. Under the mod-$2$ winding rule, the enclosed region of any closed-curve fast-walk decomposes into a trace-only part plus a measure-zero closure chord. This collapses several invariance proofs in Section~\ref{sec:fw_props} into corollaries.
\begin{lemma}[Parity-interior decomposition for closed curves]
    \label{lemma:closed_parity_decomp}
    Let $A,B\in\Cset$ be closed rectifiable curves with $L(A),L(B)>0$.
    For a closed curve $C\in\Cset$ and $p\in\R^2\setminus\Gamma_C$, define
    \[
    n_2(C;p):=n_2(\eta;p),
    \qquad \eta\in\mathcal G(C),
    \]
    where the right-hand side is independent of the choice of $\eta$.
    Define the mod-$2$ interior of $C$ by
    \[
    I_2(C):=\{p\in\R^2\setminus\Gamma_C:\ n_2(C;p)=1\}.
    \]
    Now set
    \[
    \widehat E_2(A,B)
    :=
    \Gamma_A\cup\Gamma_B\cup\bigl(I_2(A)\triangle I_2(B)\bigr),
    \]
    where $\triangle$ denotes symmetric difference.
    Then, for every $\alpha\in\mathcal G(A)$ and $\beta\in\mathcal G(B)$,
    \[
    E(\alpha,\beta)=\widehat E_2(A,B)\cup[\alpha(0),\beta(0)].
    \]
    In particular, $\widehat E_2(A,B)$ depends only on the curve classes
    $A$ and $B$, and not on the starting points or orientations chosen in
    $\alpha$ and $\beta$.
 \end{lemma}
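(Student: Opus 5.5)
The plan is to compute the mod-$2$ winding number of the fast-walk loop directly. Since $A$ and $B$ are closed, every $\alpha\in\mathcal G(A)$ and $\beta\in\mathcal G(B)$ satisfies $\alpha(0)=\alpha(1)$ and $\beta(0)=\beta(1)$. So both closure chords in \eqref{eq:fw-walk} are the same segment $\sigma:=[\alpha(0),\beta(0)]$, traversed once in each direction. In particular
\[
W(\alpha,\beta)=\Gamma_A\cup\Gamma_B\cup[\alpha(0),\beta(0)].
\]

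\textbf{Well-definedness of $n_2(C;p)$.} For $p\notin\Gamma_C$, a circular shift $\beta_C^\theta$ is a precomposition of $\beta_C$ with a rotation of $\bbS^1$. This rotation is homotopic to the identity, so the Brouwer degree of $t\mapsto(\eta(t)-p)/\|\eta(t)-p\|$ is unchanged. A reversal precomposes with an orientation-reversing homeomorphism, which multiplies the degree by $-1$. In both cases the parity is preserved, so $n_2(C;p)$, and hence $I_2(C)$, depends only on $C$.

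\textbf{Additivity of degree under concatenation.} Fix $p\in\R^2\setminus W(\alpha,\beta)$. I would use the lifting definition of degree: the degree of a loop avoiding $p$ is the total change of a continuous angle lift divided by $2\pi$. This is additive over concatenation of paths, and rescaling each piece to a quarter of $\bbS^1$ does not affect it, since angle changes are invariant under monotone reparameterization. The angle change along $\sigma$ is exactly cancelled by the angle change along its reversal, because $p\notin\sigma$. The reversed $\beta$ contributes minus the degree of $\beta$. Hence
\[
\deg(\gamma_{\alpha,\beta};p)=\deg(\alpha;p)-\deg(\beta;p),
\]
and reducing mod $2$ gives $n_2(\gamma_{\alpha,\beta};p)=n_2(A;p)+n_2(B;p)\pmod 2$. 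Therefore, for $p\notin W(\alpha,\beta)$, we have $p\in E(\alpha,\beta)$ if and only if $p\in I_2(A)\triangle I_2(B)$.

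\textbf{Set identity.} Combining the previous step with \eqref{eq:fw-enclosed-region},
\[
E(\alpha,\beta)=W(\alpha,\beta)\cup\bigl((I_2(A)\triangle I_2(B))\setminus W(\alpha,\beta)\bigr).
\]
The points of $I_2(A)\triangle I_2(B)$ that lie in $W(\alpha,\beta)$ are already contained in $W(\alpha,\beta)$, so the set difference can be dropped. Substituting the expression for $W(\alpha,\beta)$ then gives
\[
E(\alpha,\beta)=\Gamma_A\cup\Gamma_B\cup\bigl(I_2(A)\triangle I_2(B)\bigr)\cup[\alpha(0),\beta(0)]=\widehat E_2(A,B)\cup[\alpha(0),\beta(0)].
\]
The final claim follows because $\Gamma_A$, $\Gamma_B$, $I_2(A)$ and $I_2(B)$ are intrinsic to the curve classes $A$ and $B$.

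\textbf{Main obstacle.} The delicate step is the rigorous additivity of the integer degree across the four-piece concatenation, with the chord contributions cancelling. I would handle it through path-lifting of the angle function $t\mapsto\arg(\gamma(t)-p)$. The lift exists because each piece is continuous and avoids $p$, and the total angle change of a path is invariant under monotone reparameterization and changes sign under reversal.
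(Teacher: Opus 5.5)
Your proof is correct and follows the paper's argument. Both collapse the two closure chords to the single segment $[\alpha(0),\beta(0)]$ traversed in opposite directions, use additivity of degree to get $n_2(\gamma_{\alpha,\beta};p)\equiv n_2(A;p)+n_2(B;p)\pmod 2$, and read off the set identity. The only difference is that the paper removes the chord by noting that $\sigma*\beta^{\leftarrow}*\sigma^{-1}$ is freely homotopic to $\beta^{\leftarrow}$, whereas you cancel the chord's angle change directly by path lifting, which also yields the integer identity $\deg(\gamma_{\alpha,\beta};p)=\deg(\alpha;p)-\deg(\beta;p)$.
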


\begin{proof}
We first check that $n_2(C;p)$ is well-defined. Replacing $\eta$ by a circular shift only reparameterizes the same loop, so the mod-$2$ degree is unchanged. Replacing $\eta$ by its reversal changes the integer degree by a sign, but the sign disappears modulo $2$. Hence $n_2(C;p)$ is independent of the choice of $\eta\in\mathcal G(C)$.

Fix $\alpha\in\mathcal G(A)$ and $\beta\in\mathcal G(B)$. Because $A$ and $B$ are closed, $\alpha(0)=\alpha(1)$ and $\beta(0)=\beta(1)$. The two closure chords coincide as the single segment
$\Lambda_{\alpha,\beta}:=[\alpha(0),\beta(0)],$ and $W(\alpha,\beta)=\Gamma_A\cup\Gamma_B\cup \Lambda_{\alpha,\beta}.$ Let $\sigma$ be a parameterization of $\Lambda_{\alpha,\beta}$ from $\alpha(0)$ to $\beta(0)$. Then the fast-walk loop can be written as
    \[
    \gamma_{\alpha,\beta}
    =
    \alpha * \sigma * \beta^\leftarrow * \sigma^{-1}.
    \]
Fix $p\in\R^2\setminus W(\alpha,\beta)$. By additivity of mod-$2$ degree under concatenation,
\[
    n_2(\gamma_{\alpha,\beta};p)
    \equiv
    n_2(\alpha;p)
    +
    n_2(\sigma * \beta^\leftarrow * \sigma^{-1};p)
    \pmod 2.
\]
Since $p\notin \Lambda_{\alpha,\beta}$, the path $\sigma*\sigma^{-1}$ is null-homotopic in $\R^2\setminus\{p\}$, and therefore $\sigma * \beta^\leftarrow * \sigma^{-1}$ is freely homotopic to $\beta^\leftarrow$ in $\R^2\setminus\{p\}$. Free homotopy preserves the mod-$2$ degree, and reversal does not change it.
Hence
    \[
    n_2(\sigma * \beta^\leftarrow * \sigma^{-1};p)
    \equiv n_2(\beta;p)\pmod 2.
    \]
    Consequently,
    \[
    n_2(\gamma_{\alpha,\beta};p)
    \equiv
    n_2(A;p)+n_2(B;p)
    \pmod 2.
    \]
By the definition of $I_2(A)$ and $I_2(B)$, this is equal to $1$ precisely when $p\in I_2(A)\triangle I_2(B).$ Thus
    \[
    \{p\in\R^2\setminus W(\alpha,\beta):
    n_2(\gamma_{\alpha,\beta};p)=1\}
    =
    \bigl(I_2(A)\triangle I_2(B)\bigr)\setminus W(\alpha,\beta).
    \]
    Recall Equation~\eqref{eq:fw-enclosed-region}, there is
    \[
    E(\alpha,\beta)
    =
    W(\alpha,\beta)\cup\bigl(I_2(A)\triangle I_2(B)\bigr)
    =
    \widehat E_2(A,B)\cup[\alpha(0),\beta(0)].
    \]
Since $I_2(A)$ and $I_2(B)$ are intrinsic to $A$ and $B$, the set $\widehat E_2(A,B)$ is independent of the choices of $\alpha$ and $\beta$.
\end{proof}
Figure~\ref{fig:cbdfw_starts} illustrates Lemma~\ref{lemma:closed_parity_decomp} on two nested off-centre closed curves: three different starting-point pairs produce three visibly different closure chords and three different closure scores $\Delta$, but, modulo the chord segment, the enclosed region coincides with $\widehat E_2(A,B)$ in all three. Only the first starting-point pair $(\alpha(0),\beta(0))$  attains the minimum $\Delta$ value and is admissible in $\mathcal M(A,B)$, hence is the canonical walk representative; the other two are not.

\begin{figure}[h]
    \centering
    \includegraphics[width=\textwidth]{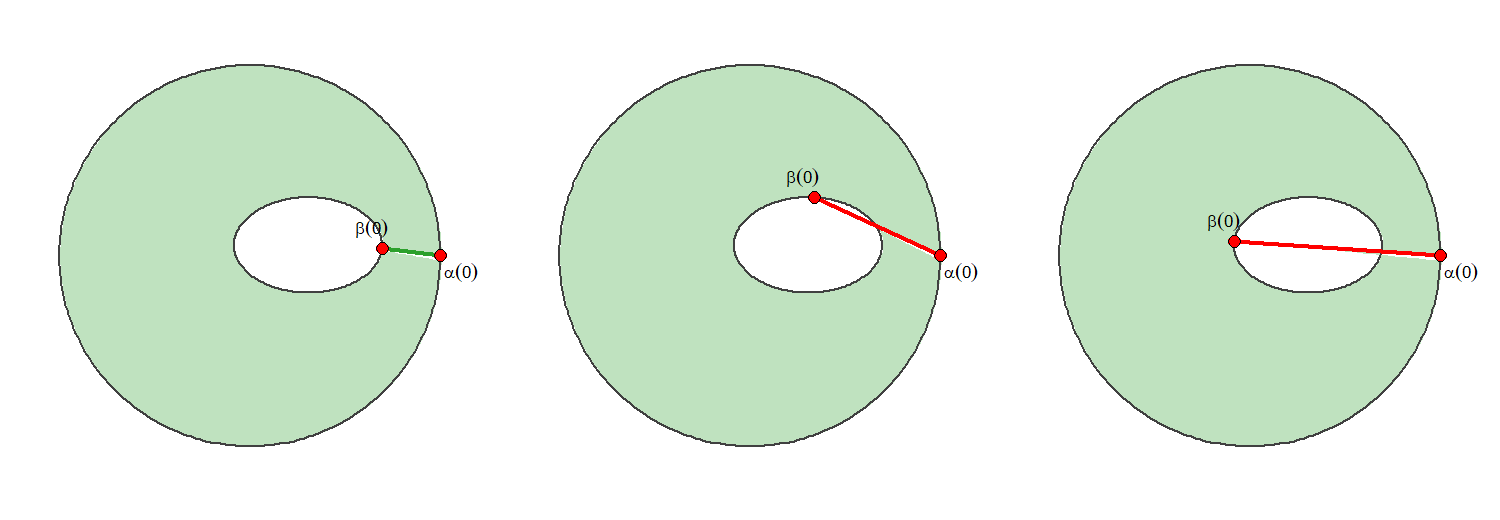}
    \caption{Closed-curve illustration of the starting-point selection in Definition~\ref{def:closure}, for nested closed curves (outer circle $A$, inner ellipse $B$).
    Each panel uses a different starting-point pair $(\alpha(0),\beta(0))$, producing a visibly different closure chord across the asymmetric annulus and a different closure score $\Delta(\alpha,\beta)$.
    Up to the measure-zero chord segment, the shaded region is the same across
    all three.}
    \label{fig:cbdfw_starts}
\end{figure}

By Definition~\ref{def:int_inf_cbd}, the FW band $R_{\mathrm{FW}}$ induces the
corresponding integral and infimal depths. In this case we write
$D_{\mathrm{FW}}$ and $D_{\mathrm{FW}}^{\inf}$ for the resulting sample depths,
and
\[
P_{\mathrm{FW}}(C\mid A,B):=\mu_C\bigl(R_{\mathrm{FW}}(A,B)\bigr)
\]
for the associated in-band proportion.

\subsection{Length-penalized CBD}
\label{sec:cbd_pen}
The integral CBD favors shorter curves: a target curve that is much shorter than typical sample curves is more likely to lie entirely inside any given band.
This is benign when curve lengths are comparable, but becomes a confound on data sets such as hand-written letters in which one class has systematically shorter trajectories than another.
We therefore introduce a multiplicative penalty based on length relative to a robust reference (i.e., the sample median).

Let $\tilde\ell$ denote the sample median of $\{L(C_i)\}_{i=1}^n$ and define the penalty factor
\[
\pi(C) := \min\{1,\ L(C)/\tilde\ell\}.
\]
The penalized CBD and penalized FW-CBD are then
\[
D^{\mathrm{pen}}_{\mathrm{int}}(C) := \pi(C)\,D_{\mathrm{int}}(C),
\qquad
D^{\mathrm{pen}}_{\mathrm{FW}}(C) := \pi(C)\,D_{\mathrm{FW}}(C).
\]
Other penalty shapes can be considered. The one above preserves similarity invariance at the dataset level (Proposition~\ref{prop:similarity}). If the same similarity transformation $g$ is applied to every curve in the sample $\{C_i\}_{i=1}^n$ and to the target curve $C$, both $L(C)$ and the median $\tilde\ell$ scale by the common factor $r$, so the ratio $L(C)/\tilde\ell$ and hence $\pi(C)$ are unchanged.
The penalized depth is therefore similarity-invariant when the sample is co-transformed with the target curve.

\begin{remark}[CBD family closure]
\label{rmk:cbd_family}
The construction of Definition~\ref{def:int_inf_cbd} extends to any band map satisfying the following two structural conditions:
(i) for every $A,B\in\Cset$, $\Rband(A,B)$ is a Borel subset of $\R^2$;
(ii) $\Rband(A,B)$ depends only on the equivalence classes $A,B\in\Cset$, i.e., is invariant under monotone reparameterization of $A$ and $B$.
Together these ensure that the in-band proportion $\mu_C(\Rband(A,B))$ is well-defined and that, under the additional hypothesis of Theorem~\ref{thm:measurability}, the resulting integral and infimal depths are Borel-measurable in $C$.
The global band $\Rband$ of \eqref{eq:global_band} satisfies (i)--(ii) trivially; the FW band of Section~\ref{sec:cbd_def} satisfies them by Propositions~\ref{prop:fw_welldef} and \ref{prop:fw_param}.
The length-penalized variants are not new bands but multiplicative adjustments of CBD by a scalar functional that is itself invariant under monotone reparameterization, so they inherit measurability and reparameterization-invariance from the underlying band.
\end{remark}

\section{Properties}
\label{sec:properties}
We state the properties of CBD and FW-CBD jointly whenever the same argument applies to both bands.
Section~\ref{sec:fw_props} lists the structural properties that are specific to the FW band.
\begin{remark}[FW-band depth prerequisites]
Propositions~\ref{prop:fw_welldef} and \ref{prop:fw_param} (proved in Section~\ref{sec:fw_props}) establish that $\mathcal M(A,B)\ne\emptyset$ and that $R_{\mathrm{FW}}(A,B)$ is a well-defined, compact, Borel function of the equivalence classes $(A,B)$.
These facts justify the use of $R_{\mathrm{FW}}(A,B)$, $\mu_C(R_{\mathrm{FW}}(A,B))$, $D_{\mathrm{FW}}$, and $D_{\mathrm{FW}}^{\mathrm{inf}}$ in Proposition~\ref{prop:boundedness} and Theorem~\ref{thm:measurability}.
\end{remark}
\subsection{Boundedness}
\begin{proposition}[Boundedness]
\label{prop:boundedness}
For any $C\in\Cset$ and any sample $\{C_i\}_{i=1}^n$, the four sample depths
\(D_{\mathrm{int}}(C\mid\{C_i\}),\;
  D_{\mathrm{inf}}(C\mid\{C_i\}),\;
  D_{\mathrm{FW}}(C\mid\{C_i\}),\;
  D_{\mathrm{FW}}^{\inf}(C\mid\{C_i\})\)
all lie in $[0,1]$.
\end{proposition}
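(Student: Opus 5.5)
The plan is to reduce everything to the elementary observation that each of the four depths is an average, with uniform weights $\binom{n}{2}^{-1}$ over the $\binom{n}{2}$ unordered pairs $\{i,j\}$, of summands that each lie in $[0,1]$. An average of numbers in $[0,1]$ again lies in $[0,1]$. So the only real work is to confirm that every summand is well-defined and takes values in $[0,1]$.

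\textbf{Step 1: the bands are Borel.} First check that, for every pair $(C_i,C_j)$, the band $\Rband(C_i,C_j)$ or $R_{\mathrm{FW}}(C_i,C_j)$ is a Borel subset of $\R^2$, so that $\mu_C$ can be evaluated on it.

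For the global band, write $\Rband(A,B)$ as the image of the compact set $\Gamma_A\times\Gamma_B\times[0,1]$ under the continuous map $(x,y,\lambda)\mapsto(1-\lambda)x+\lambda y$. The traces are compact, being continuous images of $[0,1]$ or $\bbS^1$. Hence $\Rband(A,B)$ is compact, and in particular Borel.

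For the FW band, $R_{\mathrm{FW}}(A,B)$ is a closure by its definition \eqref{eq:fw-band}, so it is closed and therefore Borel. Its well-definedness, namely that $\mathcal M(A,B)\neq\emptyset$, is supplied by Propositions~\ref{prop:fw_welldef} and~\ref{prop:fw_param}, as noted in the prerequisites remark. This dependence on those propositions is the only mildly delicate point, and I would simply invoke them.

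\textbf{Step 2: each summand lies in $[0,1]$.} Since $\mu_C$ is a probability measure on $\Borel(\R^2)$ (see \eqref{eq: arc-length}, or the Dirac convention of Remark~\ref{rmk:degenerate_nonatomic}), we have $0\le\mu_C(R)\le\mu_C(\R^2)=1$ for every Borel set $R$. This covers the integral summands $P(C\mid\Rband(C_i,C_j))$ and $P_{\mathrm{FW}}(C\mid C_i,C_j)$. The infimal summands are indicators of the events $\{\mu_C(\cdot)=1\}$, so they lie in $\{0,1\}\subset[0,1]$.

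\textbf{Step 3: conclude.} Let $a_{ij}\in[0,1]$ denote the summands. Then
\[
0\le\binom{n}{2}^{-1}\sum_{i<j}a_{ij}\le\binom{n}{2}^{-1}\binom{n}{2}=1,
\]
where $n\ge2$ ensures $\binom{n}{2}\ge1$, so the normalization makes sense. This applies verbatim to $D_{\mathrm{int}}$, $D_{\mathrm{inf}}$, $D_{\mathrm{FW}}$ and $D_{\mathrm{FW}}^{\inf}$.
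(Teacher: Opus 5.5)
Your proposal is correct and matches the paper's proof: each depth is a $\binom{n}{2}^{-1}$-weighted average of values $\mu_C(\cdot)\in[0,1]$ (because $\mu_C$ is a probability measure) or of indicators in $\{0,1\}$, so it lies in $[0,1]$. Your Step~1, which checks that the bands are Borel, only spells out what the paper leaves to Proposition~\ref{prop:fw_welldef} through its prerequisites remark.
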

\begin{proof}
For either band $\widehat\Rband\in\{\Rband,R_{\mathrm{FW}}\}$ and any pair $(i,j)$,
$0\le \mu_C(\widehat\Rband(C_i,C_j))\le 1$ because $\mu_C$ is a probability measure.
Each integral depth is a $\binom{n}{2}^{-1}$-weighted average of these quantities and hence lies in $[0,1]$.
Each infimal depth is a $\binom{n}{2}^{-1}$-weighted average of indicators in $\{0,1\}$ and hence also lies in $[0,1]$.
\end{proof}

\subsection{Vanishing at infinity}

For a fixed band construction $\Rband_b\in\{\Rband,R_{\mathrm{FW}}\}$, define
\[
S_b := \bigcup_{1\le i<j\le n}\Rband_b(C_i,C_j).
\]
Only the portion of a target curve lying in $S_b$ can contribute to the
corresponding integral depth.

\begin{proposition}[Vanishing by escape in arc-length proportion]
\label{prop:vanishing_proportion}
For any $C\in\Cset$ and any band construction
$\Rband_b\in\{\Rband,R_{\mathrm{FW}}\}$, $
D_{\mathrm{int}}(C\mid\{C_i\})
\le
\mu_C(S_b).
$ Consequently, if $\{C^{(k)}\}\subset\Cset$ satisfies $\mu_{C^{(k)}}(S_b)\to 0$, 
then $
D_{\mathrm{int}}(C^{(k)}\mid\{C_i\})\to 0.
$
Moreover, since $\mu_{C^{(k)}}(S_b)<1$ for all sufficiently large $k$, the
corresponding infimal depth satisfies $
D_{\mathrm{inf}}(C^{(k)}\mid\{C_i\})=0
$
for all sufficiently large $k$.
\end{proposition}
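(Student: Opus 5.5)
The argument is a pointwise domination followed by averaging. For each pair $i<j$ we have the inclusion $\Rband_b(C_i,C_j)\subseteq S_b$ by definition of $S_b$. Since $\mu_C$ is a (probability) measure and both sets are Borel, monotonicity gives
\[
P\bigl(C\mid \Rband_b(C_i,C_j)\bigr)=\mu_C\bigl(\Rband_b(C_i,C_j)\bigr)\le \mu_C(S_b).
\]
Borel measurability of the bands is taken for granted. For the global band it is the continuous image of the compact set $\Gamma_{C_i}\times\Gamma_{C_j}\times[0,1]$, hence compact. For the FW band it is Proposition~\ref{prop:fw_welldef}. Consequently $S_b$, a finite union of compact sets, is compact and therefore Borel. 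Averaging the displayed inequality over the $\binom{n}{2}$ pairs, as in \eqref{eq:int_cbd}, yields $D_{\mathrm{int}}^{(b)}(C\mid\{C_i\})\le\mu_C(S_b)$.

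The limit statement then follows by squeezing. Apply the bound to each $C^{(k)}$ and combine it with nonnegativity from Proposition~\ref{prop:boundedness}:
\[
0\le D_{\mathrm{int}}^{(b)}(C^{(k)}\mid\{C_i\})\le \mu_{C^{(k)}}(S_b)\to 0 .
\]

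For the infimal claim, convergence $\mu_{C^{(k)}}(S_b)\to 0$ gives a $k_0$ with $\mu_{C^{(k)}}(S_b)<1$ for all $k\ge k_0$. For such $k$ and every pair $(i,j)$, monotonicity gives $\mu_{C^{(k)}}(\Rband_b(C_i,C_j))\le\mu_{C^{(k)}}(S_b)<1$. Hence each indicator in \eqref{eq:inf_cbd} equals $0$, and so $D_{\mathrm{inf}}^{(b)}(C^{(k)}\mid\{C_i\})=0$.

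There is no genuine obstacle here. The only point requiring care is measurability of $S_b$, so that $\mu_C(S_b)$ is defined. This rests on the earlier compactness and Borel facts for each band (Proposition~\ref{prop:fw_welldef} in the FW case), which we assume.
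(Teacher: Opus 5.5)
Your proof is correct and follows the paper's argument essentially verbatim. Both use monotonicity of $\mu_C$ under $\Rband_b(C_i,C_j)\subseteq S_b$, average over the $\binom{n}{2}$ pairs, and then note that once $\mu_{C^{(k)}}(S_b)<1$ every infimal indicator vanishes; your explicit Borel-measurability check on $S_b$ and the squeeze with nonnegativity are harmless additions that the paper leaves implicit.
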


\begin{proof}
For every pair $(i,j)$,
$
\Rband_b(C_i,C_j)\subseteq S_b.
$ Hence, $
\mu_C\bigl(\Rband_b(C_i,C_j)\bigr)\le \mu_C(S_b).
$ Averaging over all unordered pairs gives
$
D_{\mathrm{int}}^{(b)}(C\mid\{C_i\})
=
\binom{n}{2}^{-1}
\sum_{1\le i<j\le n}
\mu_C\bigl(\Rband_b(C_i,C_j)\bigr)
\le
\mu_C(S_b).
$ This proves the first claim, and the convergence follows immediately. For the infimal depth, if $\mu_C(S_b)<1$, then no pairwise band
$\Rband_b(C_i,C_j)\subseteq S_b$ can satisfy
$
\mu_C\bigl(\Rband_b(C_i,C_j)\bigr)=1.
$ Thus every infimal indicator is zero, and therefore
$D_{\mathrm{inf}}^{(b)}(C\mid\{C_i\})=0$.
Applying this to $C=C^{(k)}$ gives the eventual vanishing of the infimal depth.
\end{proof}

\begin{remark}[Distance sufficient conditions] \label{rmk:vanishing_distance}
The condition $\mu_{C^{(k)}}(S_b)\to0$ is an arc-length proportion condition. A direct sufficient condition is
$ \operatorname{dist}(C^{(k)},S_b) := \inf_{x\in C^{(k)},\,z\in S_b}\|x-z\| \to\infty.$ Indeed, since $S_b$ is a finite union of compact bands, it is compact; hence $C^{(k)}\cap S_b=\varnothing$ for all sufficiently large $k$, and therefore $\mu_{C^{(k)}}(S_b)=0$ eventually.
    
A weaker-looking condition based only on a farthest point is not sufficient by itself, because the depth is based on arc-length proportion. It becomes sufficient under a uniform length bound. If
    $
    \sup_k L(C^{(k)})<\infty
    \quad\text{and}\quad
    \sup_{x\in C^{(k)}}\operatorname{dist}(x,S_b)\to\infty,
    $
    then
    $
    \operatorname{dist}(C^{(k)},S_b)\to\infty.
    $
    To see this, let $x_k\in C^{(k)}$ attain the supremum distance. For any
    $y\in C^{(k)}$,
    $
    \operatorname{dist}(y,S_b)
    \ge
    \operatorname{dist}(x_k,S_b)-\|x_k-y\|
    \ge
    \operatorname{dist}(x_k,S_b)-L(C^{(k)}).
    $
    The right-hand side tends to infinity under the stated assumptions.
\end{remark}
    
\subsection{Similarity invariance}
Let $g:\R^2\to\R^2$ be a similarity transformation $g(x)=rQx+b$ with scale $r>0$, orthogonal $Q$, and translation $b\in\R^2$.

\begin{lemma}[Band equivariance]
\label{lemma:band_equivariance}
Let $g(x)=rQx+b$ be a similarity of $\R^2$ with scale $r>0$, orthogonal $Q\in O(2)$, and translation $b\in\R^2$.
For any $A,B\in\Cset$,
\[
g(\Rband(A,B))=\Rband(g(A),g(B))
\qquad\text{and}\qquad
g(R_{\mathrm{FW}}(A,B))=R_{\mathrm{FW}}(g(A),g(B)).
\]
\end{lemma}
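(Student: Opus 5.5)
For the global band I will use the fact that a similarity $g$ is affine and bijective. An affine map preserves convex combinations: $g((1-\lambda)x+\lambda y)=(1-\lambda)g(x)+\lambda g(y)$, because $rQ$ is linear and the translation terms $(1-\lambda)b+\lambda b=b$ recombine. Here $g(A)$ means the class of $g\circ f$ for $f\in A$. This class is well defined, since composition with $g$ commutes with reparameterization, it preserves rectifiability and strict increase of arc length (lengths scale by $r$), and $\Gamma_{g(A)}=g(\Gamma_A)$. With these facts, each point of $g(\Rband(A,B))$ is a convex combination of points of $g(\Gamma_A)$ and $g(\Gamma_B)$, which gives one inclusion. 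Applying the same argument to $g^{-1}$, itself a similarity, gives the reverse inclusion.

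For the FW band I will proceed in four steps.

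\textbf{Step 1.} Show that $\mathcal G(g(A))=\{g\circ\alpha:\alpha\in\mathcal G(A)\}$. Since $L(g\circ f)=rL(f)$, the arc-length map of $g\circ f$ is $r\,s_f$. Hence $\beta_{g(A)}^{\arc}(s)=g(\beta_A^{\arc}(s/r))$, and the normalized representative is $\beta_{g(A)}=g\circ\beta_A$. Shifts and reversals act on the parameter only, so they commute with post-composition by $g$.

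\textbf{Step 2.} Show that $\mathcal M(g(A),g(B))=\{(g\circ\alpha,g\circ\beta):(\alpha,\beta)\in\mathcal M(A,B)\}$. This holds because $\Delta(g\circ\alpha,g\circ\beta)=r\,\Delta(\alpha,\beta)$ with $r>0$, so the argmin is carried bijectively.

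\textbf{Step 3.} Show that $E(g\circ\alpha,g\circ\beta)=g(E(\alpha,\beta))$ for each admissible pair. The walk satisfies $\gamma_{g\alpha,g\beta}=g\circ\gamma_{\alpha,\beta}$, since $g$ maps segments $[x,y]$ to segments $[g(x),g(y)]$. It follows that $W(g\alpha,g\beta)=g(W(\alpha,\beta))$.

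\textbf{Step 4.} Take the union over $\mathcal M$ and the closure. This uses that $g$ is a homeomorphism of $\R^2$, so $g(\cl(U))=\cl(g(U))$ and $g$ commutes with unions.

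The main obstacle is the winding-number part of Step~3. I need $n_2(g\circ\gamma;g(p))=n_2(\gamma;p)$ for $p\notin W$. I will compute
\[
\frac{g(\gamma(t))-g(p)}{\|g(\gamma(t))-g(p)\|}=Q\,\frac{\gamma(t)-p}{\|\gamma(t)-p\|},
\]
because the factor $r$ cancels. Since $Q\in O(2)$ restricts to a homeomorphism of $\bbS^1$ of degree $\det Q=\pm1$, multiplicativity of degree gives an integer degree of $\pm\deg(\cdot)$. Modulo $2$ the sign disappears, which is exactly where the mod-$2$ convention matters for reflections.

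Finally, $p\notin W(\alpha,\beta)$ if and only if $g(p)\notin W(g\alpha,g\beta)$. The parity-one sets therefore correspond under $g$, which completes $E(g\alpha,g\beta)=g(E(\alpha,\beta))$. Beyond this degree computation, the remaining care is the bookkeeping in Step~1 that the normalized representatives transform exactly, rather than only up to reparameterization.
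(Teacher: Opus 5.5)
Your proposal is correct and follows essentially the same route as the paper. For the global band, both use affine preservation of convex combinations and then the same argument for $g^{-1}$. For the FW band, both use the bijection $\alpha\mapsto g\circ\alpha$ from $\mathcal G(A)$ onto $\mathcal G(g(A))$ and the scaling $\Delta(g\circ\alpha,g\circ\beta)=r\,\Delta(\alpha,\beta)$. Both also use the identity that the normalized direction map around $g(p)$ is $Q$ composed with the one around $p$, so the degree changes only by $\det Q=\pm1$, which disappears mod $2$; and both finish by commuting $g$ with unions and closures.

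The only difference is that the paper treats the closed/closed case separately through Lemma~\ref{lemma:closed_parity_decomp}. Your direct degree computation covers all open/closed combinations at once, which is equally valid.
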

\begin{proof}
For the global band, if $z=(1-\lambda)x+\lambda y\in\Rband(A,B)$ with $x\in A,\;y\in B,\;\lambda\in[0,1]$, then
$g(z)=(1-\lambda)g(x)+\lambda g(y)\in\Rband(g(A),g(B))$ because $g$ is affine.
The converse follows by applying $g^{-1}$. The proof of FW identity is given in Appendix~\ref{app:proofs}.
\end{proof}

\begin{proposition}[Similarity invariance]
\label{prop:similarity}
For any $C\in\Cset$, any sample $\{C_i\}_{i=1}^n$, and any similarity $g$,
\[
D_{\bullet}(C\mid\{C_i\})=D_{\bullet}(g(C)\mid\{g(C_i)\})
\quad\text{for}\quad
D_{\bullet}\in\bigl\{D_{\mathrm{int}},D_{\mathrm{inf}},D_{\mathrm{FW}},D_{\mathrm{FW}}^{\inf}\bigr\}.
\]
\end{proposition}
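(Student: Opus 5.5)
The plan is to reduce everything to one pairwise identity,
\[
\mu_{g(C)}\bigl(\widehat\Rband(g(C_i),g(C_j))\bigr)=\mu_C\bigl(\widehat\Rband(C_i,C_j)\bigr),\qquad \widehat\Rband\in\{\Rband,R_{\mathrm{FW}}\},
\]
valid for every pair $(i,j)$. Once this holds, the integral depths agree because they are the same $\binom{n}{2}^{-1}$-weighted averages of equal terms. The infimal depths agree because each indicator $\one\{\mu_C(\cdot)=1\}$ is a function of the same number. Since the sample size $n$ and the index set of pairs are unchanged by $g$, no bookkeeping beyond this identity is needed.

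\textbf{Step 1: $g$ acts on $\Cset$.} If $f\in C$ is a representative, then $g\circ f$ is continuous and has total variation $rL(f)$, because $\|g(x)-g(y)\|=r\|x-y\|$. Its arc-length map is $s_{g\circ f}=r\,s_f$, which is still strictly increasing, so $g\circ f\in\mathcal F(I)$ on the same domain. If $f_1=f_2\circ\phi$, then $g\circ f_1=(g\circ f_2)\circ\phi$, so $g(C)$ is well defined, with trace $g(\Gamma_C)$ and length $L(g(C))=rL(C)$. The arc-length representative is
\[
\beta_{g(C)}^{\arc}(s)=g\bigl(\beta_C^{\arc}(s/r)\bigr),\qquad s\in[0,rL(C)],
\]
since this map has unit speed almost everywhere and traces $g\circ\beta_C$.

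\textbf{Step 2: $\mu_{g(C)}$ is the push-forward $g_\#\mu_C$.} For Borel $B$, substitute $s=ru$ in \eqref{eq: arc-length}:
\[
\mu_{g(C)}(B)=\frac{1}{rL(C)}\int_0^{rL(C)}\one_B\bigl(g(\beta_C^{\arc}(s/r))\bigr)\,ds=\frac1{L(C)}\int_0^{L(C)}\one_{g^{-1}(B)}\bigl(\beta_C^{\arc}(u)\bigr)\,du=\mu_C(g^{-1}(B)).
\]
The same computation applies on the circle in the closed case. Note that $g^{-1}(B)$ is Borel because $g$ is a homeomorphism.

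\textbf{Step 3: combine with Lemma~\ref{lemma:band_equivariance}.} Take $B=\widehat\Rband(g(C_i),g(C_j))$. The lemma gives $B=g(\widehat\Rband(C_i,C_j))$. Since $g$ is a bijection, $g^{-1}(B)=\widehat\Rband(C_i,C_j)$, and Step 2 yields the pairwise identity. The bands are Borel because the global band is compact (continuous image of $\Gamma_A\times\Gamma_B\times[0,1]$) and the FW band is a closure.

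\textbf{Main obstacle.} Step 3 depends on the FW part of Lemma~\ref{lemma:band_equivariance}, which we are allowed to assume. Within this proof the only delicate point is Step 1, namely checking that $g(C)$ lies in $\Cset$ and correctly identifying its arc-length representative. Once that is done, Step 2 is a direct change of variables.
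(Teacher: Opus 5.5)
Your proposal is correct and follows the paper's proof essentially step for step. Both arguments reduce to the pairwise identity $\mu_{g(C)}\bigl(\widehat{\Rband}(g(C_i),g(C_j))\bigr)=\mu_C\bigl(\widehat{\Rband}(C_i,C_j)\bigr)$, obtained from Lemma~\ref{lemma:band_equivariance} and the change of variables $u=s/r$ applied to the arc-length representative $g(\beta_C^{\arc}(s/r))$, and then average over pairs; your Step~1 (that $g$ acts well on $\Cset$) and the Borel check on $g^{-1}(B)$ only make explicit details the paper leaves implicit.
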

\begin{proof}
Fix any band $\widehat\Rband\in\{\Rband,R_{\mathrm{FW}}\}$.
By Lemma~\ref{lemma:band_equivariance}, $g(\widehat\Rband(C_i,C_j))=\widehat\Rband(g(C_i),g(C_j))$.
A similarity scales arc-length by $r$: $L(g(C))=r\,L(C)$, and an arc-length representative of $g(C)$ is $s\mapsto g(\beta_C^{\arc}(s/r))$ on $[0,r\,L(C)]$ (speed $|(g(\beta_C^{\arc}(s/r)))'|=r\cdot|\beta_C^{\arc}{}'(s/r)|\cdot(1/r)=1$ a.e.).
Hence for any Borel set $B\subseteq\R^2$,
\[
\beta^{\arc}_{g(C)}(s)=g\!\left(\beta_C^{\arc}(s/r)\right),\qquad s\in[0,rL(C)].
\]
Hence, for any Borel set $B\subset\R^2$,
\[
\mu_{g(C)}(B)
=\frac{1}{rL(C)}\int_0^{rL(C)}
\one_B\!\left(g\!\left(\beta_C^{\arc}(s/r)\right)\right)\,ds
=\frac{1}{L(C)}\int_0^{L(C)}
\one_{g^{-1}(B)}\!\bigl(\beta_C^{\arc}(u)\bigr)\,du
=\mu_C(g^{-1}B),
\]
where the second equality follows by the change of variables $u=s/r$.
Applying this with $B=\widehat\Rband(g(C_i),g(C_j))$ and using $g^{-1}(\widehat\Rband(g(C_i),g(C_j)))=\widehat\Rband(C_i,C_j)$, we get
\[
\mu_{g(C)}\bigl(\widehat\Rband(g(C_i),g(C_j))\bigr)=\mu_C\bigl(\widehat\Rband(C_i,C_j)\bigr),
\]
which gives the equalities for the integral and infimal depths after averaging over unordered pairs.
\end{proof}

\subsection{Measurability}
\label{sec:measurability}
Throughout this subsection, $\Cset$ carries the Borel $\sigma$-algebra induced by the embedding $\Phi$ of Section~\ref{sec:curve_space}; set operations on $C\in\Cset$ use the trace identification of Section~\ref{sec:curve_space}, and the Hausdorff metric $d_H$ on the hyperspace $(\mathcal{K},d_H)$ of nonempty compact subsets of $\R^2$ is used only when needed to metrize the trace map $C\mapsto\Gamma_C$.

\begin{theorem}[Measurability of the CBD map]
\label{thm:measurability}
Fix a finite sample $\{C_i\}_{i=1}^n\subset\Cset$ with $n\ge 2$.
Assume the arc-length probability measure $\mu_C$ of \eqref{eq: arc-length} is defined for every $C\in\Cset$, and that for every Borel set $B\subseteq\R^2$ the map $C\mapsto\mu_C(B)$ is Borel-measurable with respect to the Borel $\sigma$-algebra on $\Cset$ induced by $\Phi$.
Then all four depth maps
\[
C\mapsto D_{\bullet}(C\mid\{C_i\}),\qquad
\bullet\in\{\mathrm{int},\,\mathrm{inf},\,\mathrm{FW},\,\mathrm{FW}^{\mathrm{inf}}\},
\]
are Borel-measurable with respect to the same $\sigma$-algebra.
\end{theorem}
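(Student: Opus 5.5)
The sample $\{C_i\}_{i=1}^n$ is fixed, so every band appearing in the depths is a fixed subset of $\R^2$. The plan is to reduce each depth map to a finite combination of maps $C\mapsto\mu_C(B)$ with fixed Borel sets $B$. The hypothesis of the theorem says exactly that these maps are measurable, and finite sums, scalar multiples and indicators of measurable level sets preserve measurability.

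\textbf{Step 1: the bands are Borel sets.} For each pair $i<j$, set $B_{ij}:=\Rband(C_i,C_j)$ and $B^{\mathrm{FW}}_{ij}:=R_{\mathrm{FW}}(C_i,C_j)$.
\begin{itemize}
\item[(a)] For the global band, $\Rband(C_i,C_j)$ is the image of the compact set $\Gamma_{C_i}\times\Gamma_{C_j}\times[0,1]$ under the continuous map $(x,y,\lambda)\mapsto(1-\lambda)x+\lambda y$. It is therefore compact, hence closed, hence Borel.
\item[(b)] For the FW band, $R_{\mathrm{FW}}(C_i,C_j)$ is a closure by \eqref{eq:fw-band}, hence closed and Borel. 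Its well-definedness ($\mathcal M(C_i,C_j)\neq\emptyset$) and its dependence only on the classes $C_i,C_j$ are supplied by Propositions~\ref{prop:fw_welldef} and \ref{prop:fw_param}, as noted in the remark on FW-band prerequisites.
\end{itemize}
Only the Borel property of finitely many fixed sets is needed. No measurability of $(A,B)\mapsto$ band is required, because the generating curves do not vary.

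\textbf{Step 2: integral depths.} By the hypothesis, each map $C\mapsto\mu_C(B_{ij})$ and $C\mapsto\mu_C(B^{\mathrm{FW}}_{ij})$ is a real-valued Borel-measurable function on $\Cset$. The depths $D_{\mathrm{int}}$ and $D_{\mathrm{FW}}$ are $\binom n2^{-1}$ times finite sums of these functions, so they are measurable.

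\textbf{Step 3: infimal depths.} Write each summand as $\one\{\mu_C(B)=1\}=\one_{\{1\}}\circ h_B(C)$ with $h_B(C):=\mu_C(B)$. The singleton $\{1\}$ is Borel in $\R$ and $h_B$ is measurable, so the preimage $h_B^{-1}(\{1\})$ is a measurable subset of $\Cset$ and its indicator is measurable. Summing finitely many such indicators and multiplying by $\binom n2^{-1}$ gives the measurability of $D_{\mathrm{inf}}$ and $D_{\mathrm{FW}}^{\inf}$.

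\textbf{Main obstacle.} The only substantive point is Step 1(b), which requires the FW band to be a genuine Borel set. This rests on the earlier propositions ensuring that the argmin set is nonempty and that the enclosed regions make sense; once that is granted, the closure in \eqref{eq:fw-band} makes the Borel property automatic. I would also remark that the measurability hypothesis on $C\mapsto\mu_C(B)$ is what carries the weight. If one wanted to drop it, a monotone-class argument would be needed: first prove measurability for open $B$ via lower semicontinuity of $C\mapsto\mu_C(B)$ under sup-norm convergence of normalized representatives, then extend to all Borel $B$ by Dynkin's $\pi$–$\lambda$ theorem. That is not required for the statement as posed.
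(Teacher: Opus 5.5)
Your proposal is correct and follows the paper's proof essentially step for step. The bands for the fixed sample are Borel, the standing hypothesis makes each $C\mapsto\mu_C(B)$ measurable, and finite averages of these maps, or of the indicators $\one\{\mu_C(B)=1\}$ (preimages of $\{1\}$), remain measurable; the only differences are cosmetic: you obtain the Borel property of $R_{\mathrm{FW}}(C_i,C_j)$ directly from the closure in \eqref{eq:fw-band} rather than from the compactness in Proposition~\ref{prop:fw_welldef}, and you skip the paper's extra observation that $C\subseteq B\Leftrightarrow\mu_C(B)=1$ for closed $B$, which is not needed because the infimal depth is defined through $\one\{\mu_C(B)=1\}$ directly.
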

\begin{proof}
For fixed $i<j$, each of the two bands for the pair $(C_i,C_j)$ is a compact, hence Borel, subset of $\R^2$: the global band $\Rband(C_i,C_j)$ is the continuous image of $C_i\times C_j\times[0,1]$ under the convex-combination map, and the FW band $R_{\mathrm{FW}}(C_i,C_j)$ is compact by Proposition~\ref{prop:fw_welldef}.
By the standing assumption, $C\mapsto\mu_C(B)$ is Borel-measurable for every Borel $B$; applying this to $B=\Rband(C_i,C_j)$ and $B=R_{\mathrm{FW}}(C_i,C_j)$ gives Borel-measurability of $C\mapsto\mu_C(B)$ for each fixed pair.
The integral depths $D_{\mathrm{int}}$ and $D_{\mathrm{FW}}$ are $\binom{n}{2}^{-1}$-weighted finite sums of these measurable maps, hence Borel-measurable.
For the infimal depths, observe that for any compact (hence closed) band $B\subset\R^2$ and any $C\in\Cset$:
$C\subseteq B \Leftrightarrow \mu_C(B)=1$.
(The direction $\Rightarrow$: $\beta_C^{\arc}$ takes values in $C\subseteq B$, so the integrand $\one_B(\beta_C^{\arc}(s))=1$ a.e. The direction $\Leftarrow$: if $C\not\subseteq B$, then by continuity of $\beta_C^{\arc}$ there is an open arc of positive length mapped outside $B$, giving $\mu_C(B)<1$.)
Hence $D_{\mathrm{inf}}$ and $D_{\mathrm{FW}}^{\mathrm{inf}}$ can each be expressed using $1\{\mu_C(B)=1\}$, the indicator of the preimage of $\{1\}$ under a measurable map; such indicators are measurable, and summing and scaling preserves measurability.
\end{proof}

\begin{remark}[Scope of the measurability assumption]
\label{rmk:meas_scope}
The hypothesis that $C\mapsto\mu_C(B)$ be Borel-measurable with respect to the $\Phi$-induced Borel $\sigma$-algebra on $\Cset$ is non-trivial and is treated as a standing assumption here. Establishing this measurability in full generality remains a natural direction for future work.
\end{remark}

\subsection{FW-specific structural properties}
\label{sec:fw_props}
The propositions in this section concern structural properties of the FW band that have no direct counterpart for the global band.
Detailed proofs are deferred to Appendix~\ref{app:proofs}. As noted in Remark~\ref{rmk:degenerate_nonatomic}, we assume that $A,B\in\Cset$ satisfy $L(A),L(B)>0$.

\begin{proposition}[Well-definedness]
\label{prop:fw_welldef}
For any two rectifiable curves $A$ and $B$, the minimizing set $\mathcal M(A,B)$ is nonempty and the fast-walk band region $R_{\mathrm{FW}}(A,B)$ is compact and Borel-measurable.
Consequently, $P_{\mathrm{FW}}(C\mid A,B)$ is well-defined for every rectifiable curve $C$.
\end{proposition}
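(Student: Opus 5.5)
The plan is to handle the open and closed cases separately for nonemptiness of $\mathcal M(A,B)$, then prove compactness of each $E(\alpha,\beta)$ and boundedness of the union, and finally conclude via the closure in \eqref{eq:fw-band}.

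\emph{Nonemptiness.} If $A$ and $B$ are both open, $\mathcal G(A)\times\mathcal G(B)$ has four elements, so $\Delta$ attains its minimum trivially. If either curve is closed, parameterize its family by $\theta\in\bbS^1$ together with an orientation bit. Since $\beta_C$ is continuous on the compact $\bbS^1$, the maps $\theta\mapsto\beta_C^\theta(0)=\beta_C(\theta)$ and $\theta\mapsto\beta_C^\theta(1)$ are continuous, and similarly for the reversals. Hence $\Delta$ is a continuous function on a finite disjoint union of copies of $\bbS^1$ or $\bbS^1\times\bbS^1$. By compactness the minimum is attained, so $\mathcal M(A,B)\neq\emptyset$.

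\emph{Compactness.} Fix $(\alpha,\beta)$. The set $W(\alpha,\beta)$ is compact, being a finite union of continuous images of $[0,1]$. The mod-$2$ degree $n_2(\gamma_{\alpha,\beta};p)$ is locally constant on $\R^2\setminus W(\alpha,\beta)$ by homotopy invariance of degree, and it vanishes on the unbounded component: for $p$ far away the map $t\mapsto(\gamma(t)-p)/\|\gamma(t)-p\|$ misses a direction and is therefore null-homotopic. So $E(\alpha,\beta)$ is $W$ together with a union of bounded components of the complement. It lies inside the disk $D$ of radius $\rho=\max\{\sup\|x\|: x\in\Gamma_A\cup\Gamma_B\}$ centered at $0$; the chords lie in $D$ by convexity, and the bounded components lie in it because $\R^2\setminus D$ is connected and unbounded, hence contained in the unbounded component. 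Since $D$ depends only on $\Gamma_A\cup\Gamma_B$, the whole union over $\mathcal M(A,B)$ lies in $D$, and its closure is closed and bounded, hence compact. Because it is closed, it is Borel.

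\emph{Consequence.} $P_{\mathrm{FW}}(C\mid A,B)=\mu_C(R_{\mathrm{FW}}(A,B))$ is then well-defined because $\mu_C$ is a Borel probability measure.

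\emph{Main obstacle.} The delicate point is making the degree facts rigorous for merely rectifiable, possibly self-intersecting loops. These facts are: local constancy off $W$, and vanishing on the unbounded component. Both should follow from standard homotopy invariance of the Brouwer degree for continuous maps $\bbS^1\to\bbS^1$, using that a straight-line homotopy of $p$ within a path in the complement keeps the normalized map continuous. Note that the closure in \eqref{eq:fw-band} is what allows us to avoid showing that an uncountable union of the sets $E$ is closed.
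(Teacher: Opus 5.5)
Your proof is correct and follows essentially the same route as the paper. You get nonemptiness of $\mathcal M(A,B)$ from continuity of $\Delta$ on the compact parameter space (finite for open curves, copies of $\bbS^1$ or $\bbS^1\times\bbS^1$ with orientation bits otherwise), you get uniform boundedness because $n_2$ vanishes off a disk containing $\Gamma_A\cup\Gamma_B$, and closedness comes from the closure in the definition of $R_{\mathrm{FW}}$. The paper additionally proves that each $E(\alpha,\beta)$ is closed and that $\mathcal M(A,B)$ is compact, but as you note, neither fact is needed for the stated conclusion.
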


\begin{proposition}[Reparameterization invariance of the FW band]
\label{prop:fw_param}
For each $A,B\in\Cset$, the admissible family $\mathcal G(A)$ depends only on the equivalence class $A\in\Cset$ and not on the particular constant-speed representative chosen in Definition~\ref{def:normapara}; the same holds for $B$.
Consequently, $R_{\mathrm{FW}}(A,B)$ is a well-defined function of $(A,B)\in\Cset\times\Cset$ — equivalently, $R_{\mathrm{FW}}$ is invariant under monotone reparameterization of $A$ and $B$.
\end{proposition}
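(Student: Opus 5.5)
The claim to prove is Proposition~\ref{prop:fw_param}. The plan is to reduce everything to the uniqueness of arc-length parameterization recorded in Section~\ref{sec:curve_space}. Then we check that $\Delta$, $\mathcal M$ and $E$ only ever see the family $\mathcal G(\cdot)$ as a set.

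\emph{Step 1: $\mathcal G(A)$ depends only on $A$.} Take two representatives $f_1\sim f_2$ of $A$. Their arc-length representatives $f_k\circ s_{f_k}^{-1}$ differ by one of the four cases (0)--(iii) listed after \eqref{eq: arc-length}. Rescaling by $L(A)$, which is intrinsic by Remark~\ref{rmk:monotone_reparam}(i), the two normalized representatives $\beta_A$, $\tilde\beta_A$ of Definition~\ref{def:normapara} are related as follows.
\begin{itemize}
\item In the open case, $\tilde\beta_A\in\{\beta_A,\beta_A^\leftarrow\}$. Hence $\{\tilde\beta_A,\tilde\beta_A^\leftarrow\}=\{\beta_A,\beta_A^\leftarrow\}$, since reversal is an involution.
\item In the closed case, $\tilde\beta_A=\beta_A^{\theta_0}$ or $\tilde\beta_A=(\beta_A^{\theta_0})^\leftarrow$ for some $\theta_0$. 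The family $\{\beta^\theta,(\beta^\theta)^\leftarrow:\theta\in\bbS^1\}$ is closed under composing shifts and reversals: shifting $\beta^{\theta_0}$ by $\theta$ gives $\beta^{\theta+\theta_0}$, and shifting $(\beta^{\theta_0})^\leftarrow$ by $\theta$ gives $(\beta^{\theta_0-\theta})^\leftarrow$, with indices taken mod $1$. Therefore $\mathcal G$ computed from $\tilde\beta_A$ equals $\mathcal G$ computed from $\beta_A$.
\end{itemize}
This short index bookkeeping is the only computation in the proof. Verifying the mod-$1$ identities carefully is the main, if mild, obstacle.

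\emph{Step 2: $R_{\mathrm{FW}}$ depends only on $(A,B)$.} The closure score $\Delta(\alpha,\beta)$ is a function of the pair of maps $(\alpha,\beta)$ only. So $\mathcal M(A,B)$, as the argmin over the product set $\mathcal G(A)\times\mathcal G(B)$, depends only on that set, which by Step~1 depends only on the classes. Likewise, each $E(\alpha,\beta)$ is determined by the loop $\gamma_{\alpha,\beta}$, and hence by $(\alpha,\beta)$. Consequently the union over $\mathcal M(A,B)$ in \eqref{eq:fw-band}, and its closure, are functions of $(A,B)\in\Cset\times\Cset$ alone.

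\emph{Step 3: reparameterization invariance.} If $f_A'=f_A\circ\phi$ with $\phi$ a monotone homeomorphism of the type allowed in the definition of $\sim$, then $f_A'$ and $f_A$ lie in the same class. Step~2 then gives equality of the bands, which is the equivalent formulation in the statement.

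A brief optional remark: for closed curves, Lemma~\ref{lemma:closed_parity_decomp} gives independent confirmation that the trace-level part $\widehat E_2(A,B)$ is intrinsic. Only the chord depends on $(\alpha,\beta)$, and the chord is selected through $\mathcal M$, whose intrinsic nature is Step~2.
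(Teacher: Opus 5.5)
Your proposal is correct and follows essentially the same route as the paper. Both arguments reduce to the uniqueness of the arc-length representative, then check by mod-$1$ index bookkeeping that the family generated by $\tilde\beta$ coincides with $\mathcal G(C)$ (in the open case because reversal is an involution, in the closed case by composing shifts and reversals), and finally note that $\mathcal M(A,B)$, each $E(\alpha,\beta)$ and the closure in \eqref{eq:fw-band} depend on $A,B$ only through $\mathcal G(A)\times\mathcal G(B)$. The one small difference is that the paper writes out the reversal component $(\tilde\beta^\theta)^\leftarrow$ explicitly, whereas your version covers it implicitly through the closure of $\mathcal G$ under reversal and the symmetric relation between $\beta$ and $\tilde\beta$.
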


\begin{proposition}[Symmetry in the two generating curves]
\label{prop:fw_sym}
For any two rectifiable curves $A$ and $B$, $R_{\mathrm{FW}}(A,B)=R_{\mathrm{FW}}(B,A)$.
\end{proposition}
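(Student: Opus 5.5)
The plan is to exhibit an explicit bijection between admissible pairs for $(A,B)$ and admissible pairs for $(B,A)$. The bijection should preserve the closure score $\Delta$ and the enclosed region $E$; the equality of the bands then follows by taking unions and closures. Concretely, I will use the swap map $\Psi(\alpha,\beta):=(\beta,\alpha)$, which sends $\mathcal G(A)\times\mathcal G(B)$ bijectively onto $\mathcal G(B)\times\mathcal G(A)$. The definition of $\mathcal G$ is per-curve, so this needs no argument.

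First I check that $\Delta$ is symmetric. Since $\|\alpha(1)-\beta(1)\|_2+\|\alpha(0)-\beta(0)\|_2=\|\beta(1)-\alpha(1)\|_2+\|\beta(0)-\alpha(0)\|_2$, we have $\Delta(\alpha,\beta)=\Delta(\beta,\alpha)$. Because $\Psi$ is a bijection preserving the objective, it maps minimizers to minimizers, so $\Psi(\mathcal M(A,B))=\mathcal M(B,A)$. Nonemptiness of $\mathcal M$ comes from Proposition~\ref{prop:fw_welldef}, although it is not even needed for the set equality.

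Next I compare the two boundary traces. From Definition~\ref{def:fwb},
\[
W(\alpha,\beta)=\alpha([0,1])\cup[\alpha(1),\beta(1)]\cup\beta([0,1])\cup[\beta(0),\alpha(0)],
\]
and the same union appears for $W(\beta,\alpha)$, since segments are unordered. Hence $W(\alpha,\beta)=W(\beta,\alpha)$.

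For the parity part, I claim that $\gamma_{\beta,\alpha}$ is a reparameterization of the reversal of $\gamma_{\alpha,\beta}$. The loop $\gamma_{\alpha,\beta}$ traverses $\alpha$ forward, then the chord $\alpha(1)\to\beta(1)$, then $\beta$ backward, then the chord $\beta(0)\to\alpha(0)$. Reversing it gives the chord $\alpha(0)\to\beta(0)$, then $\beta$ forward, then the chord $\beta(1)\to\alpha(1)$, then $\alpha$ backward. Up to a circular shift of the starting point, this is exactly $\gamma_{\beta,\alpha}$, which traverses $\beta$ forward, $[\beta(1),\alpha(1)]$, $\alpha$ backward, and $[\alpha(0),\beta(0)]$. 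Quarter-rescalings are orientation-preserving homeomorphisms of $\bbS^1$ and do not affect degree. A circular shift also leaves the degree unchanged, and reversal flips its sign. Therefore for every $p\notin W$,
\[
n_2(\gamma_{\beta,\alpha};p)=n_2(\gamma_{\alpha,\beta};p),
\]
exactly as in the well-definedness step of Lemma~\ref{lemma:closed_parity_decomp}. Consequently $E(\alpha,\beta)=E(\beta,\alpha)$.

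Finally, I combine these facts:
\[
R_{\mathrm{FW}}(B,A)=\cl\Bigl(\bigcup_{(\beta,\alpha)\in\mathcal M(B,A)}E(\beta,\alpha)\Bigr)=\cl\Bigl(\bigcup_{(\alpha,\beta)\in\mathcal M(A,B)}E(\alpha,\beta)\Bigr)=R_{\mathrm{FW}}(A,B).
\]
The only step needing care is the loop-reversal identification, in particular handling the circular shift and the degree invariance under orientation-reversing reparameterization. I would settle it by writing the explicit homeomorphism $t\mapsto \tfrac14-t \pmod 1$ of $\bbS^1$ and checking it piece by piece on the four quarters. The remaining steps are bookkeeping. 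The argument covers open and closed curves uniformly, since it never uses the structure of $\mathcal G$ beyond its being defined separately for each curve.
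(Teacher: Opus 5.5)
Your proof is correct and follows the paper's argument essentially step for step: symmetry of $\Delta$ makes the swap a bijection $\mathcal M(A,B)\leftrightarrow\mathcal M(B,A)$; $W(\alpha,\beta)=W(\beta,\alpha)$ because segments are unordered; and $\gamma_{\beta,\alpha}$ is a cyclic shift of the reversal of $\gamma_{\alpha,\beta}$, so the parity sets agree and $E(\alpha,\beta)=E(\beta,\alpha)$. One small slip is in your proposed explicit check: with the paper's quarter layout the right homeomorphism is $t\mapsto \tfrac34-t \bmod 1$, i.e.\ $\gamma_{\beta,\alpha}(t)=\gamma_{\alpha,\beta}(\tfrac34-t \bmod 1)$, whereas $t\mapsto\tfrac14-t$ already fails at $t=0$, sending $\gamma_{\beta,\alpha}(0)=\beta(0)$ to $\gamma_{\alpha,\beta}(\tfrac14)=\alpha(1)$.
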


\begin{proposition}[FW band is contained in the global band]
    \label{prop:fw_subset_global}
    For any two rectifiable curves $A,B\in\Cset$,
    \[
    R_{\mathrm{FW}}(A,B)\subseteq \Rband(A,B).
    \]
\end{proposition}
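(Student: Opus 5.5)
The plan is to prove the stronger pointwise statement: for \emph{every} admissible pair $\alpha\in\mathcal G(A)$, $\beta\in\mathcal G(B)$ (not only the closure-minimizing ones), $E(\alpha,\beta)\subseteq\Rband(A,B)$. The proposition then follows by a closure argument. $\Rband(A,B)$ is the image of the compact set $\Gamma_A\times\Gamma_B\times[0,1]$ under the continuous map $(x,y,\lambda)\mapsto(1-\lambda)x+\lambda y$, so it is compact and hence closed. The union over $\mathcal M(A,B)$ of the sets $E(\alpha,\beta)$ is therefore contained in a closed set, and so is its closure $R_{\mathrm{FW}}(A,B)$.

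\textbf{The boundary trace.} First I show $W(\alpha,\beta)\subseteq\Rband(A,B)$, which is immediate from \eqref{eq:global_band}. Taking $\lambda=0$ gives $\alpha([0,1])=\Gamma_A\subseteq\Rband(A,B)$, and taking $\lambda=1$ gives $\Gamma_B\subseteq\Rband(A,B)$. The two chords $[\alpha(1),\beta(1)]$ and $[\beta(0),\alpha(0)]$ are segments joining a point of $A$ to a point of $B$, so they are also contained in the band.

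\textbf{The parity-one points.} The key device is the straight-line homotopy
\[
F:[0,1]^2\to\R^2,\qquad F(\lambda,t):=(1-\lambda)\alpha(t)+\lambda\beta(t).
\]
By construction $F([0,1]^2)\subseteq\Rband(A,B)$. Traverse $\partial[0,1]^2$ in the following order:
\begin{itemize}
\item the bottom edge $\lambda=0$, from $t=0$ to $t=1$, on which $F$ traces $\alpha$;
\item the edge $t=1$, from $\lambda=0$ to $\lambda=1$, on which $F$ traces the segment from $\alpha(1)$ to $\beta(1)$;
\item the edge $\lambda=1$, from $t=1$ back to $t=0$, on which $F$ traces $\beta$ in reverse;
\item the edge $t=0$, from $\lambda=1$ to $\lambda=0$, on which $F$ traces the segment from $\beta(0)$ to $\alpha(0)$.
\end{itemize}
Hence $F|_{\partial[0,1]^2}$, with $\partial[0,1]^2$ identified with $\bbS^1$, is exactly the loop \eqref{eq:fw-walk}, up to an orientation-preserving reparameterization of each quarter.

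Now let $p\in\R^2\setminus W(\alpha,\beta)$ and suppose $p\notin\Rband(A,B)$. Then $F$ maps the whole square into $\R^2\setminus\{p\}$. Contracting the square to a point gives a null-homotopy of $\gamma_{\alpha,\beta}$ in $\R^2\setminus\{p\}$. The integer degree of $t\mapsto(\gamma_{\alpha,\beta}(t)-p)/\|\gamma_{\alpha,\beta}(t)-p\|$ is therefore $0$, so $n_2(\gamma_{\alpha,\beta};p)=0$. Contrapositively, every $p$ with $n_2(\gamma_{\alpha,\beta};p)=1$ lies in $\Rband(A,B)$. Combined with the boundary-trace step, this gives $E(\alpha,\beta)\subseteq\Rband(A,B)$.

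This step needs no case split between open and closed curves. In the closed case we have $\alpha(0)=\alpha(1)$ and $\beta(0)=\beta(1)$, but the construction of $F$ and the boundary identification are unchanged. For closed curves one could alternatively use Lemma~\ref{lemma:closed_parity_decomp}, but the homotopy argument is uniform.

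\textbf{Main obstacle.} The delicate point is the identification of $F|_{\partial[0,1]^2}$ with $\gamma_{\alpha,\beta}$, for two reasons.
\begin{itemize}
\item The chords in \eqref{eq:fw-walk} are parameterized by some affine map rather than by $\lambda$ itself. This differs from the boundary of $F$ only by a monotone reparameterization of each quarter.
\item The quarters are rescaled to fit $\bbS^1$.
\end{itemize}
I will handle both by noting that the two loops differ by precomposition with an orientation-preserving homeomorphism of $\bbS^1$, which is homotopic to the identity. Consequently the Brouwer degree in \eqref{eq:n2-degree} is unchanged, and the null-homotopy transfers to $\gamma_{\alpha,\beta}$ itself.
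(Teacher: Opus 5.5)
Your proof is correct, but it takes a different route from the paper's.

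\textbf{The paper's route.} The paper introduces an auxiliary diagonal chord $\delta=[\beta(1),\alpha(0)]$ and splits $\gamma_{\alpha,\beta}$, up to the cancelling backtrack $\delta*\delta^{-1}$, into two loops $\ell_A=\alpha*\sigma_1*\delta$ and $\ell_B=\delta^{-1}*\beta^{\leftarrow}*\sigma_0$. These loops lie in the cones $K_A=\bigcup_{x\in\Gamma_A}[x,\beta(1)]$ and $K_B=\bigcup_{y\in\Gamma_B}[\alpha(0),y]$. Both cones are star-shaped subsets of $\Rband(A,B)$, so each loop is contracted separately. The paper then invokes additivity of the mod-$2$ winding number under concatenation.

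\textbf{Your route.} Your straight-line homotopy $F(\lambda,t)=(1-\lambda)\alpha(t)+\lambda\beta(t)$ replaces all of this with a single singular disk in $\Rband(A,B)$ whose boundary is exactly the fast walk. The only topological fact you need is that a loop extending over a disk in $\R^2\setminus\{p\}$ has degree $0$. No auxiliary chord, cone decomposition, or concatenation additivity is required.

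\textbf{What each buys.} Your argument gives a sharper inclusion for free. Since $W(\alpha,\beta)\subseteq F([0,1]^2)$ and every point $p\notin F([0,1]^2)$ has $n_2(\gamma_{\alpha,\beta};p)=0$, you get $E(\alpha,\beta)\subseteq F([0,1]^2)$. In words, the enclosed region is covered by the ``synchronized'' chords $[\alpha(t),\beta(t)]$ joining points at equal normalized arc length, which can be considerably smaller than $\Rband(A,B)$.

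The paper's cone argument instead uses only the traces $\Gamma_A,\Gamma_B$ and the endpoints $\alpha(0),\beta(1)$, not a common parameterization of $\alpha$ and $\beta$. It yields the different refinement $E(\alpha,\beta)\subseteq K_A\cup K_B$.

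Your handling of the quarter-wise reparameterization and of the final closure step is fine.
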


\section{Implementation}\label{sec:implementation}

In practice, each curve is approximated by $m$ points sampled at roughly equal
arc-length intervals. Denote by
\[
\tilde C_i=\{x_{i,1},\dots,x_{i,m}\}
\]
the resulting $m$-point discretization of $C_i$.
The same sampling scheme is used for target curves.

\subsection{Global band approximation}
\label{sec:impl_global}

\paragraph{Tiling construction.}
The most direct discrete realization of the global band
$\Rband(\tilde C_i,\tilde C_j)$ tiles the region between the two discretized curves
with the quadrilaterals
\[
\bigl[x_{i,v},x_{i,v+1},x_{j,u+1},x_{j,u}\bigr],
\qquad v,u=1,\dots,m-1,
\]
splits ill-defined quadrilaterals into triangles, and forms the resulting
(possibly non-convex) band polygon by polygonal union.
For a target curve $\tilde C_k$, the in-band fraction is then computed by
point-in-polygon tests for the $m$ sampled points $\{x_{k,q}\}_{q=1}^m$.
This realization follows Definition~\ref{def:int_inf_cbd} directly and is a
natural discrete approximation to $\mu_C(\Rband(C_i,C_j))$ as the sampling
resolution $m$ increases. However, the
polygon-union step makes it too costly, so we do
not use it as the primary estimator.

\paragraph{Chord-raster approximation.}
The practical estimator used in our experiments for the global band is a
pixel-stamp approximation of $\Rband(\tilde C_i,\tilde C_j)$.
We discretize the joint bounding box of the reference set on a uniform
$G\times G$ grid.
For each pair $(i,j)$, we draw $N$ chord endpoints
$\{(u_s,v_s)\}_{s=1}^N$ uniformly from $\{1,\dots,m\}^2$, rasterize each chord
$[x_{i,u_s},x_{j,v_s}]$ onto the grid by integer line interpolation, and stamp a
disk of integer radius $\rho$ pixels around every chord pixel.
Let $S_{ij}\subset\{1,\dots,G\}^2$ denote the set of stamped cells.
We then approximate
\[
\Rband(\tilde C_i,\tilde C_j)
\;\approx\;
\bigcup_{(p_x,p_y)\in S_{ij}} \mathrm{cell}(p_x,p_y).
\]
For a target curve $\tilde C_k$, the corresponding in-band fraction is
\[
\widehat P_{\mathrm{cr}}(\tilde C_k\mid\Rband(\tilde C_i,\tilde C_j))
=
\frac{1}{m}\sum_{q=1}^m
\one\!\bigl(\mathrm{cell}(x_{k,q})\in S_{ij}\bigr).
\]
The continuous definition satisfies $A\subseteq\Rband(A,B)$ and $B\subseteq\Rband(A,B)$ exactly, but the pixel-stamp approximation can miss isolated sampled points of the generating curves due to grid discretization.
We therefore override the estimator for the two generating indices, setting
$\widehat P_{\mathrm{cr}}(\tilde C_k\mid\Rband(\tilde C_i,\tilde C_j))=1$ whenever $k\in\{i,j\}$, so that self-inclusion holds exactly for the discrete estimator.

\subsection{FW band approximation}
\label{sec:impl_fw}
The FW estimator realizes the theoretical band $R_{\mathrm{FW}}(A,B)$ of
\eqref{eq:fw-band} through an aligned walk-polygon approximation.
Given arc-length samples
\[
\tilde C_i=\{x_{i,1},\dots,x_{i,m}\},
\qquad
\tilde C_j=\{x_{j,1},\dots,x_{j,m}\},
\]
we enumerate the discrete family induced by $\tilde C_j$, $\mathcal G(\tilde C_j)$. For each candidate
\[
\tilde C_j^{(c)}=\{y_1^{(c)},\dots,y_m^{(c)}\},
\]
we compute the discrete closure score
\[
\widehat\Delta\bigl(\tilde C_i,\tilde C_j^{(c)}\bigr)
:=
\|x_{i,m}-y_m^{(c)}\|_2+\|y_1^{(c)}-x_{i,1}\|_2,
\]
which is the polygonal analogue of the continuous closure score.
Let $\mathcal M^\ast$ denote the set of all discrete minimizers. For each $c\in\mathcal M^\ast$, we form the closed-walk polygon
\begin{equation}
\label{eq:fw-impl-poly}
P_{ij}^{(c)}
:=
\bigl(x_{i,1},\dots,x_{i,m},\;y_m^{(c)},\dots,y_1^{(c)},\;x_{i,1}\bigr),
\end{equation}
whose closing edges are the two closure chords.
We then define $\widehat R_{\mathrm{FW}}(\tilde C_i,\tilde C_j)$ as the union of the polygonal regions enclosed by all $P_{ij}^{(c)}$, and determine target-point membership via \texttt{sp::point.in.polygon} in \texttt{R} \citep{sp:bivand2013applied}.

\subsection{In-band proportions}
\label{sec:impl_inband}

Given any of the discrete band approximations above, the in-band fraction for a
target curve $\tilde C_k$ is computed by point-in-set tests on the $m$ sampled
points,
\[
\widehat P(\tilde C_k\mid\widehat\Rband(\tilde C_i,\tilde C_j))
=
\frac{1}{m}\sum_{q=1}^m
\one\!\bigl(x_{k,q}\in\widehat\Rband(\tilde C_i,\tilde C_j)\bigr).
\]
The corresponding sample depth is then obtained by averaging $\widehat P$ over all
unordered pairs $(i,j)$.

\subsection{Approximation accuracy and discretization effects}
\label{sec:impl_accuracy}

The discrete estimators above approximate the arc-length probability
$\mu_C(\Rband(A,B))$ or $\mu_C(R_{\mathrm{FW}}(A,B))$ by evaluating membership on an equi-arc-length discretization of the target curve.
Under deterministic equal-arc-length sampling, this is a Riemann-type approximation to the corresponding line integral.
Accordingly, increasing $m$ stabilizes the estimated in-band fraction for both the global-band and FW-band constructions.

The chord-raster approximation introduces an additional source of error that does not arise in the tiling or FW implementations.
Two effects are relevant. First, the stamp set $S_{ij}$ is built by union, so once a pixel cell is marked it remains marked, and additional sampled chords can only enlarge the apparent band.
Second, the stamp radius $\rho$ is implemented on the discrete grid, which induces a resolution floor of order $1/G$ in physical units.
Together, these effects produce a small upward bias that does not disappear merely by increasing the number of sampled chords.

To quantify these approximation effects, we consider a toy configuration for which the finite-sample CBD can be computed exactly.
Specifically, we generate $50$ unit-length rays emanating from a common origin, with directions given by $25$ random angles in $(0,\pi/4)$ together with their antipodal counterparts.
Each curve is therefore a line segment from the origin to a point on the unit circle.

This configuration is convenient because, for any pair of rays, the global
band $\Rband(C_i,C_j)$ is exactly the filled triangle spanned by the common origin and the two endpoints.
Consequently, for any target ray $C_k$, the in-band proportion
$\mu_{C_k}(\Rband(C_i,C_j))$ admits a closed-form expression determined by planar triangle geometry, and the resulting finite-sample CBD values can therefore be computed exactly.
In this unit-ray setting, the FW band coincides with the global band, because the fast-walk boundary formed by two rays together with the closure chords encloses exactly the same triangular region as the global band. Thus both the FW implementation and the chord-raster implementation can be benchmarked against the same exact finite-sample CBD values. Figure~\ref{fig:unit-ray-config} displays one realization of this toy configuration.

\begin{figure}[htbp]
\centering
\includegraphics[width=.62\textwidth]{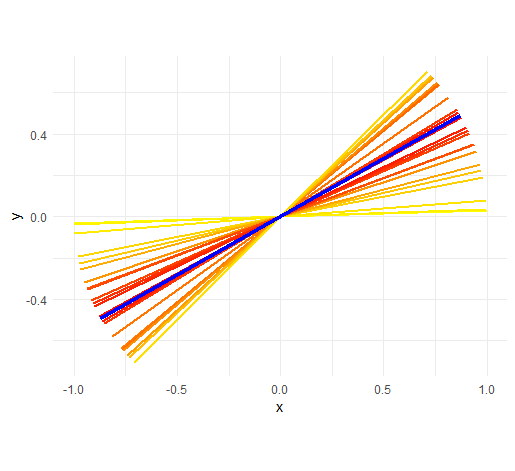}
\caption{A realization of the unit-ray toy configuration used for the approximation study.
Each curve is a unit-length ray emanating from the common origin. Colors indicate the
depth ranking, from yellow (shallower) to red (deeper), and the deepest curve is shown
in blue.}
\label{fig:unit-ray-config}
\end{figure}

For each random seed, we compute the absolute deviation
\(
\bigl|\widehat D(C_k)-D_{\mathrm{CBD}}^{\mathrm{theo}}(C_k)\bigr|
\)
for all $50$ rays, and then pool these deviations over $10$ random seeds.
Table~\ref{tab:cbd_bias} reports the pooled mean absolute errors for the FW estimator and for the chord-raster estimator under several discretization settings.

\begin{table}[h!]
\centering
\caption{Pooled mean absolute error of the FW and chord-raster estimators against the theoretical CBD on the unit-ray toy configuration, aggregated over $10$ random seeds and $50$ curves. For the chord-raster estimator, the pixel grid is fixed at $G=600$ and the stamp radius at $\rho=1$ pixel.}
\label{tab:cbd_bias}
\begin{tabular}{lccc}
\hline
\textbf{Estimator} & $m_{\mathrm{eval}}$ & $N$ chords & Mean abs.\ error \\
\hline
FW                 & 120  & --   & $5.92\times10^{-3}$ \\
FW                 & 500  & --   & $1.43\times10^{-3}$ \\
FW                 & 1000 & --   & $7.13\times10^{-4}$ \\
chord-raster       & 120  & 1000 & $3.08\times10^{-2}$ \\
chord-raster       & 500  & 1000 & $2.85\times10^{-2}$ \\
chord-raster       & 1000 & 1000 & $2.83\times10^{-2}$ \\
chord-raster       & 500  & 3000 & $3.07\times10^{-2}$ \\
chord-raster       & 500  & 5000 & $3.13\times10^{-2}$ \\
\hline
\end{tabular}
\end{table}

Table~\ref{tab:cbd_bias} shows two things.
First, the FW approximation error decreases rapidly as the arc-length sampling
resolution $m_{\mathrm{eval}}$ increases, because in this geometry the FW polygon is a direct discretization of the exact triangular band.
Second, the chord-raster error stabilizes around $3\times 10^{-2}$ and does not
improve when more chords are sampled. The fixed pixel grid imposes a resolution
floor, and the stamp set $S_{ij}$ is built by union. Once a pixel cell is marked, it remains marked, so additional chords can only preserve or enlarge the apparent band. Thus, after the relevant cells have been saturated, additional sampled chords add little new geometric information and may even slightly increase the error through extra over-coverage. In this example, the dominant source of chord-raster error is therefore the grid resolution rather than Monte Carlo variability.
\subsection{Computational complexity}
\label{sec:impl_complexity}
Let $n$ denote the sample size, $m$ the number of equi-arc-length sample points per
curve, and $N$ the number of sampled chords in the chord-raster approximation.
Since all depths average over unordered curve pairs, the number of generating pairs is
$\binom{n}{2}=O(n^2)$.

At the pairwise band-construction level, the dominant costs are: tiling, $O(m^2)$ before polygonal union, where the union step is the main computational bottleneck in practice; chord-raster, $O(N)$ when the grid resolution and stamp radius are treated as fixed; FW, $O(m)$ for constructing the walk polygon when both curves are open (only two orientations need be checked), and up to $O(m^2)$ when at least one curve is closed if all circular shifts are enumerated. In all three cases, one must subsequently evaluate band membership for the $m$ sampled points on each target curve in order to compute the in-band proportion.

In practice, exact tiling is the slowest because of the polygonal union overhead, whereas FW is typically the fastest for open curves.

\section{Numerical illustrations and experiments}
\label{sec:experiments}
We first illustrate the proposed depths on hand-crafted toy curves for which the band geometry is fully transparent. We then evaluate them on three tasks: outlier detection on hand-written letters from \texttt{mrfDepth}~\citep{mrfD}, curve classification on hand-written letters and MNIST digit trajectories~\citep{deng2012mnist, de2021depth}, and a one-class geometric screening task on the \texttt{MOBISIG} online-signature dataset~\citep{antal2018mobisig}. Finally, we give a small exploratory clustering analysis on UCI Character Trajectories~\citep{williams2006character}.
Throughout, CBD is computed with the chord-raster estimator at $G=600$ and stamp radius $\rho=1$ pixel, and the halfspace curve depth (HCD) of \citet{de2021depth} serves as the comparison method.

\subsection{Toy examples}
We compare CBD, FW-CBD, and HCD on four toy configurations: heterogeneous parallel segments, an equal-length star, a restricted-angle star, and concentric circles.

The four columns of Figure~\ref{fig:cbdtoy} illustrate where the three methods agree and where they diverge. For parallel segments, all three favor curves near the middle of the sample. Without length adjustment, however, CBD and FW-CBD also prefer shorter segments, because shorter curves are more easily covered by pairwise bands. In the equal-length star, all three assign essentially the same depth to every curve; this is forced by the symmetry of the configuration together with similarity invariance. In the restricted-angle star, all three again favor rays near the angular center of the observed wedges, with qualitatively similar rankings.

The clearest disagreement is on concentric circles. CBD ranks the innermost circles as deepest, since every pairwise band contains the inner ones. FW-CBD picks a circle whose radius is roughly in the middle of the observed range, because the FW band behaves like a radial interval between two circles. HCD also attains its maximum at an interior radius, between the two but visually closer to the FW-CBD choice.

We now move to real data.

\begin{figure}[h]
    \begin{subfigure}[b]{0.24\textwidth}
        \centering
        \includegraphics[width=\textwidth]{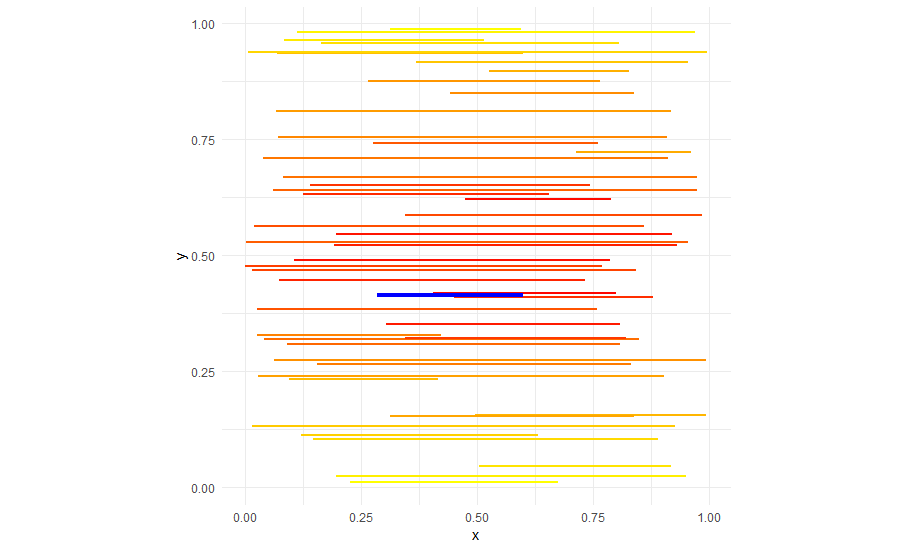}
        \caption{}
    \end{subfigure}
    \hfill
    \begin{subfigure}[b]{0.24\textwidth}
        \centering
        \includegraphics[width=\textwidth]{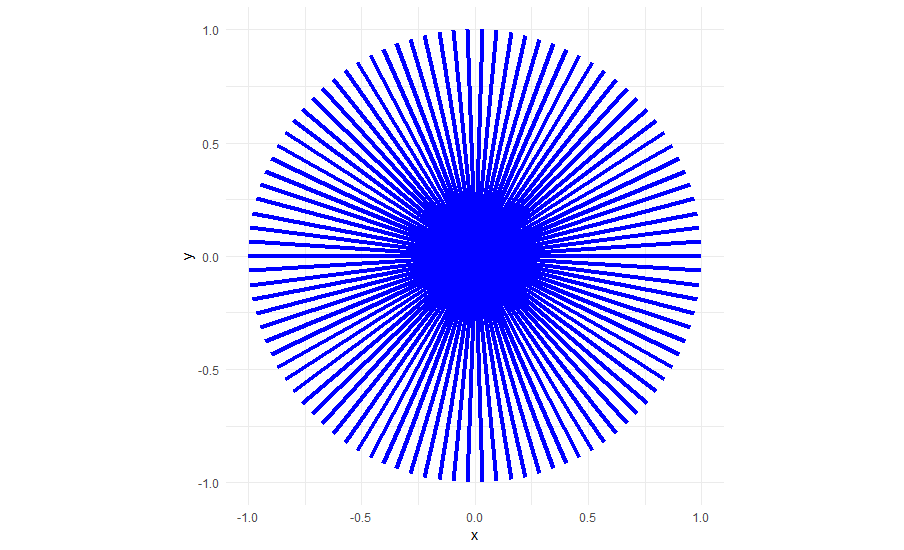}
        \caption{}
    \end{subfigure}
    \hfill
    \begin{subfigure}[b]{0.24\textwidth}
        \centering
        \includegraphics[width=\textwidth]{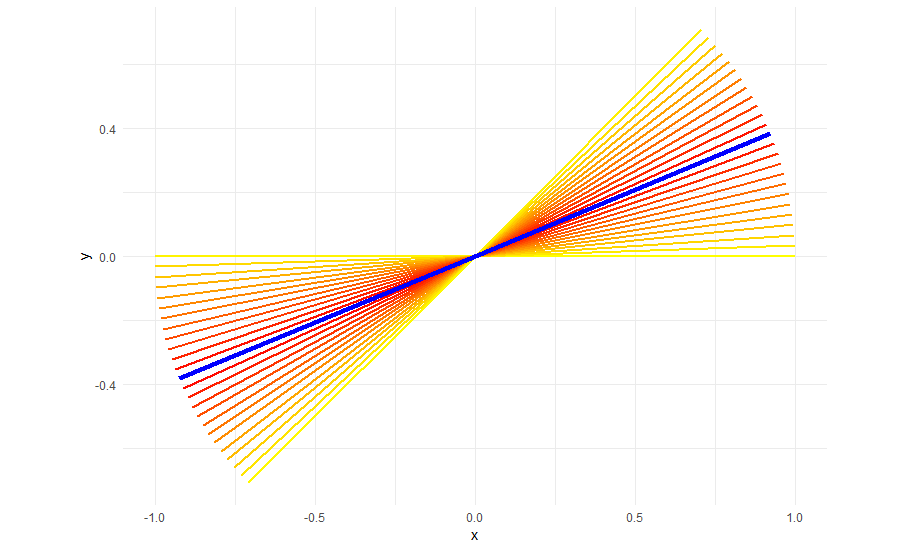}
        \caption{}
    \end{subfigure}
      \hfill
    \begin{subfigure}[b]{0.24\textwidth}
        \centering
        \includegraphics[width=\textwidth]{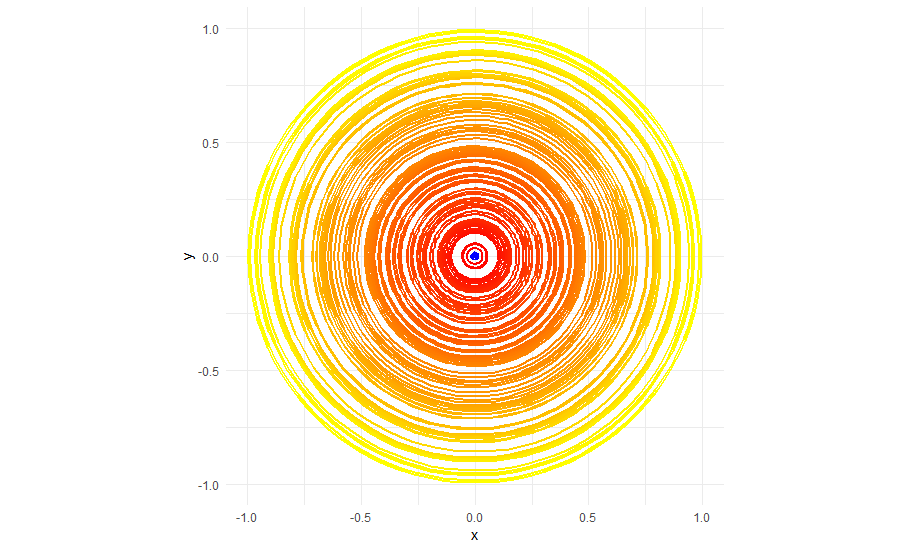}
        \caption{}
    \end{subfigure}

    \begin{subfigure}[b]{0.24\textwidth}
        \centering
        \includegraphics[width=\textwidth]{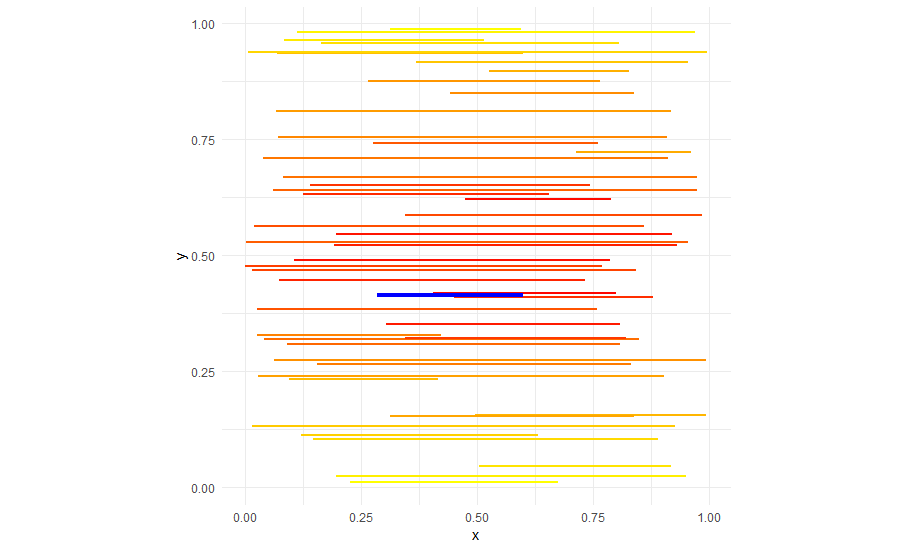}
        \caption{}
    \end{subfigure}
    \hfill
    \begin{subfigure}[b]{0.24\textwidth}
        \centering
        \includegraphics[width=\textwidth]{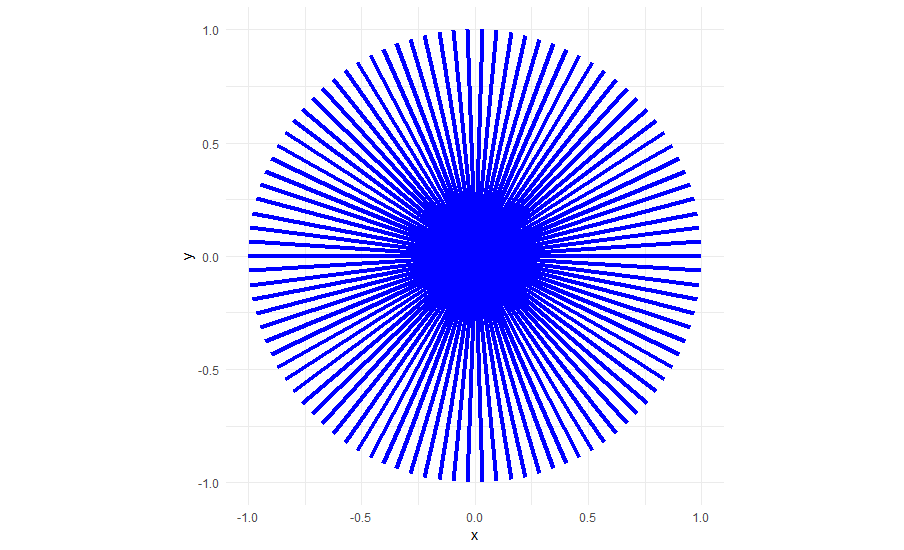}
        \caption{}
    \end{subfigure}
    \hfill
    \begin{subfigure}[b]{0.24\textwidth}
        \centering
        \includegraphics[width=\textwidth]{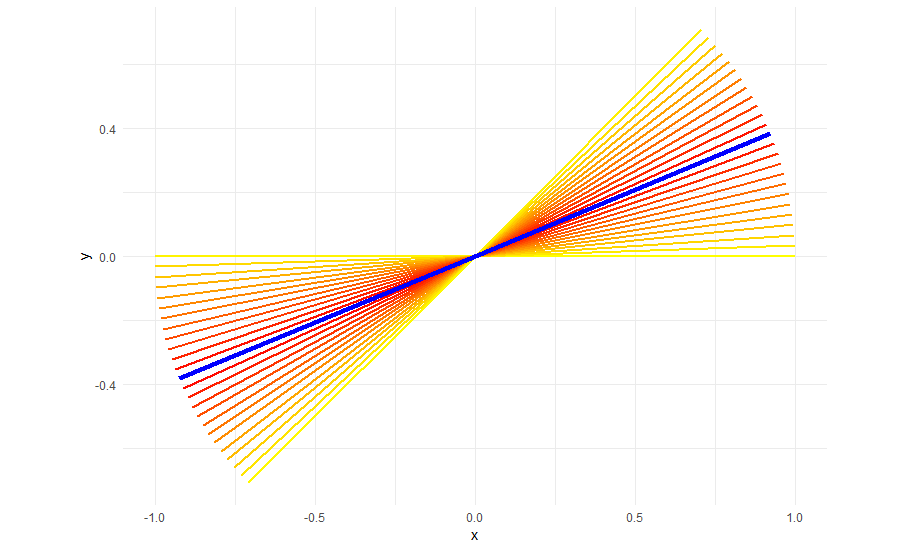}
        \caption{}
    \end{subfigure}
      \hfill
    \begin{subfigure}[b]{0.24\textwidth}
        \centering
        \includegraphics[width=\textwidth]{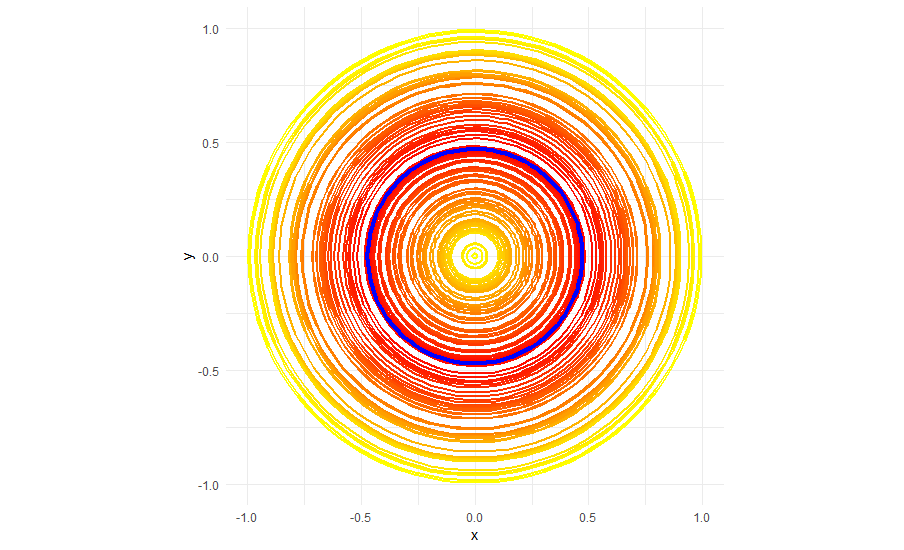}
        \caption{}
    \end{subfigure}

    \begin{subfigure}[b]{0.24\textwidth}
        \centering
        \includegraphics[width=\textwidth]{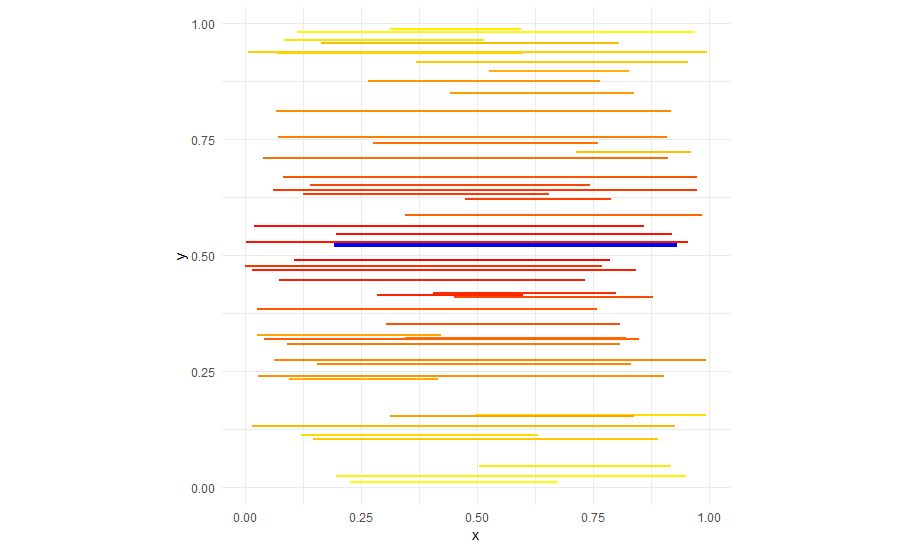}
        \caption{}
    \end{subfigure}
    \hfill
        \begin{subfigure}[b]{0.24\textwidth}
        \centering
        \includegraphics[width=\textwidth]{cbdfw_star.png}
        \caption{}
    \end{subfigure}
    \hfill
    \begin{subfigure}[b]{0.24\textwidth}
        \centering
        \includegraphics[width=\textwidth]{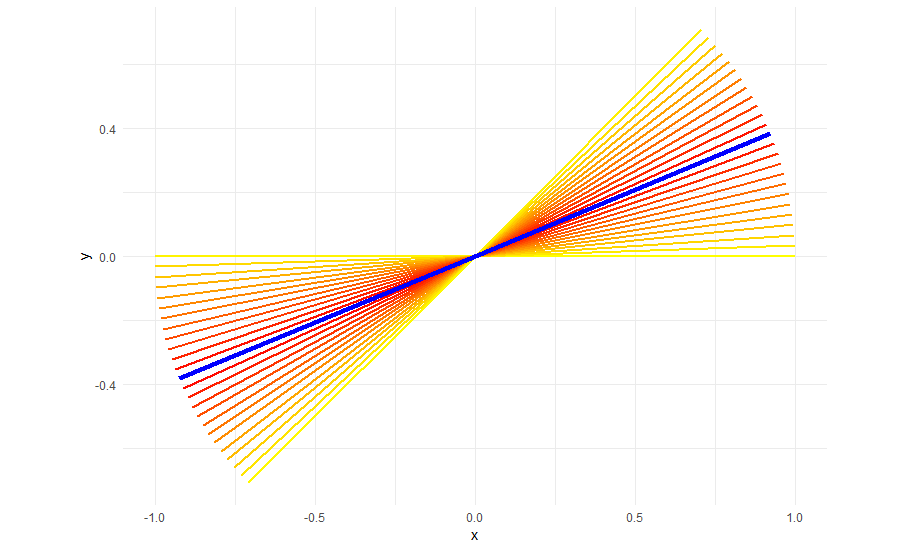}
        \caption{}
    \end{subfigure}
      \hfill
    \begin{subfigure}[b]{0.24\textwidth}
        \centering
        \includegraphics[width=\textwidth]{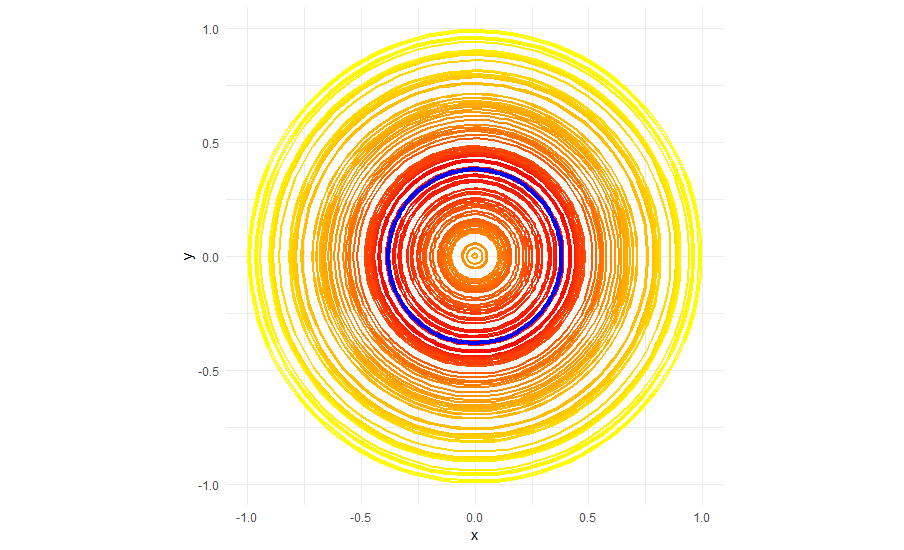}
        \caption{}
    \end{subfigure}
    \caption{Toy-example rankings under three methods: CBD (top row), FW-CBD
(middle row), and HCD (bottom row). The four columns correspond to heterogeneous
parallel segments, an equal-length star, a restricted-angle star, and concentric
circles. In each panel the deepest curve is shown in blue, and depth increases
from yellow to red.}
    \label{fig:cbdtoy}
\end{figure}
\subsection{Outlier detection}\label{sec:OD}
We construct two contamination experiments from the hand-written letters
\texttt{a} and \texttt{i} in the \texttt{R} package \texttt{mrfDepth}.
The first data set consists of $100$ randomly drawn \texttt{a} trajectories together with $10$ randomly drawn \texttt{i} trajectories; the second reverses the roles, with $100$ \texttt{i} trajectories as the majority class and $10$ \texttt{a} trajectories as the outliers.
All curves are centered and rescaled before depth computation.

Figures~\ref{fig:ai_graph}--\ref{fig:ia_order} summarize the resulting depth
rankings under CBD, length-adjusted CBD, FW-CBD, and HCD.
For the task of detecting \texttt{i} outliers among majority \texttt{a} trajectories, length-adjusted CBD is the only method that separates the outliers almost perfectly. The reason is length heterogeneity. Even after scale normalization, \texttt{a} and \texttt{i} trajectories still have systematically different arc lengths.
A typical \texttt{a} traces a loop and a descender, whereas a typical \texttt{i} is essentially a single vertical stroke, so the median \texttt{i} trajectory is noticeably shorter than the median \texttt{a} trajectory. Without a penalty, a short \texttt{i} curve embedded among majority \texttt{a} trajectories lies almost entirely inside a band generated by two \texttt{a} curves, and hence receives high CBD; the length adjustment in Section~\ref{sec:cbd_pen} corrects this. Among the remaining methods, FW-CBD recovers part of the \texttt{i} outliers, while raw CBD and HCD perform poorly.

The second task, detecting \texttt{a} outliers among majority \texttt{i} trajectories, is shown in Figures~\ref{fig:ia_graph} and \ref{fig:ia_order}.
Here FW-CBD recovers a visible portion of the \texttt{a} outliers, while the other three estimators show little separation. The FW-CBD panel in Figure~\ref{fig:ia_graph} is the only one in which the \texttt{a}-shaped outliers remain clearly visible among the lowest-depth curves (yellow \texttt{a}).
Taken together, the two contamination experiments suggest that length-adjusted CBD is most useful when contamination is mainly a length effect, and that FW-CBD is preferable when classes differ in local geometric shape rather than in global band enclosure.

\begin{figure}[h]
    \begin{subfigure}[b]{0.48\textwidth}
        \centering
        \includegraphics[width=\textwidth]{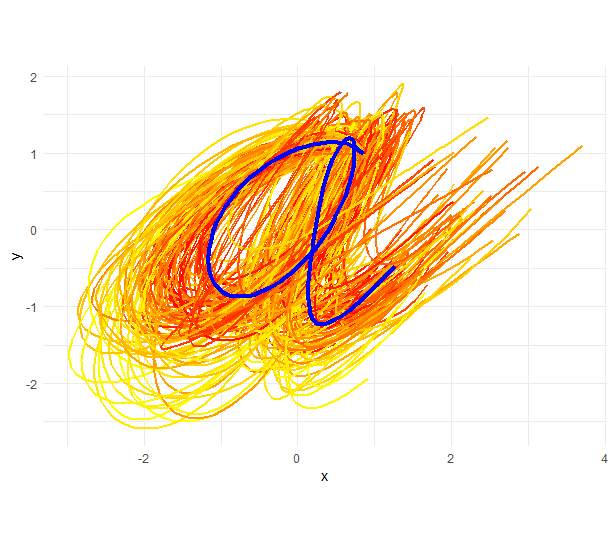}
        \caption{CBD}
    \end{subfigure}
    \hfill
    \begin{subfigure}[b]{0.48\textwidth}
        \centering
        \includegraphics[width=\textwidth]{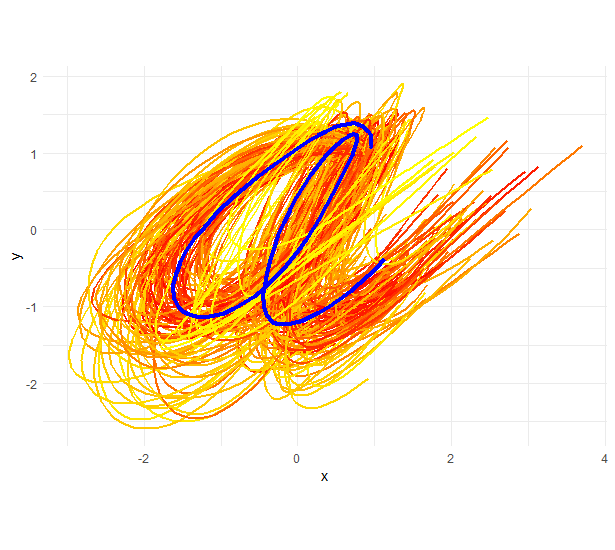}
        \caption{CBD (length adjusted)}
    \end{subfigure}

    \begin{subfigure}[b]{0.48\textwidth}
        \centering
        \includegraphics[width=\textwidth]{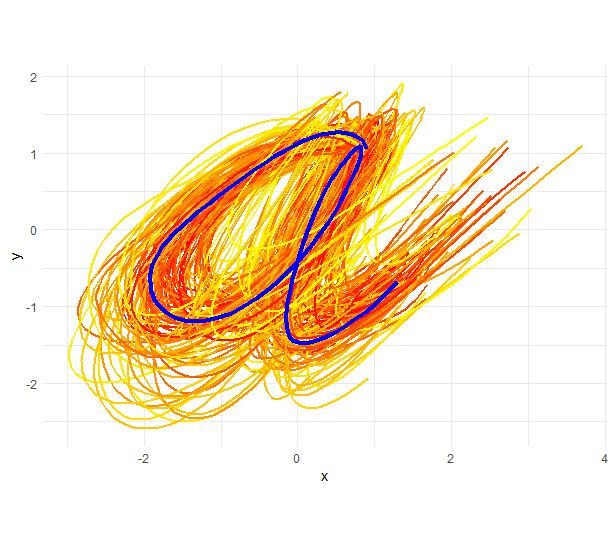}
        \caption{FW-CBD}
    \end{subfigure}
    \hfill
    \begin{subfigure}[b]{0.48\textwidth}
        \centering
        \includegraphics[width=\textwidth]{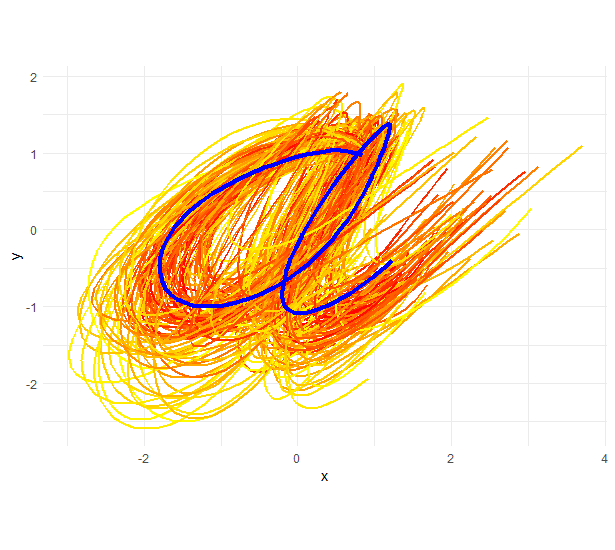}
        \caption{HCD}
    \end{subfigure}
    \caption{Depth-colored curve plots for the first contamination experiment
    ($100$ \texttt{a} trajectories and $10$ \texttt{i} trajectories).}
    \label{fig:ai_graph}
\end{figure}

\begin{figure}[h]
    \begin{subfigure}[b]{0.48\textwidth}
        \centering
        \includegraphics[width=\textwidth]{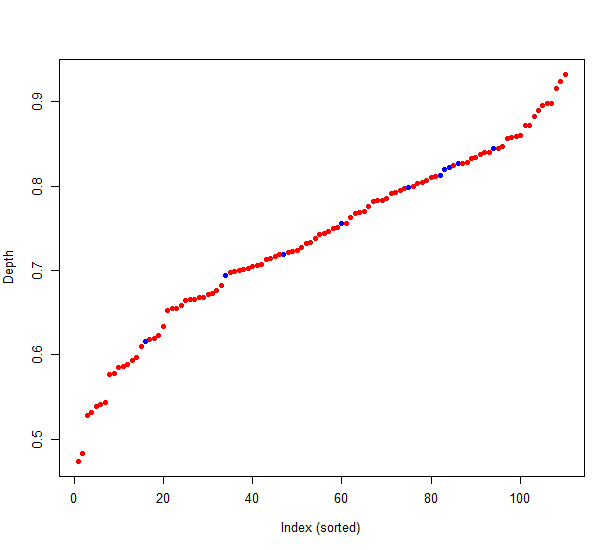}
        \caption{CBD}
    \end{subfigure}
    \hfill
    \begin{subfigure}[b]{0.48\textwidth}
        \centering
        \includegraphics[width=\textwidth]{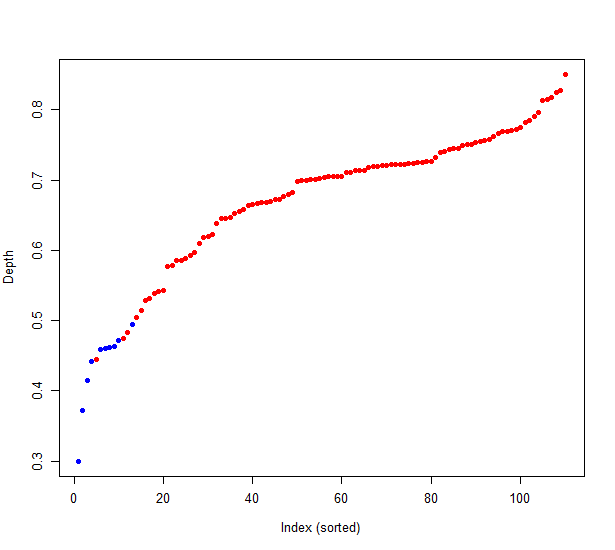}
        \caption{CBD (length adjusted)}
    \end{subfigure}

    \begin{subfigure}[b]{0.48\textwidth}
        \centering
        \includegraphics[width=\textwidth]{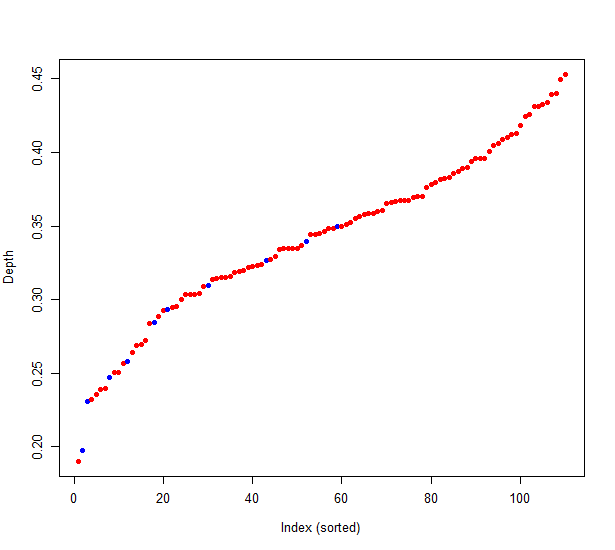}
        \caption{FW-CBD}
    \end{subfigure}
    \hfill
    \begin{subfigure}[b]{0.48\textwidth}
        \centering
        \includegraphics[width=\textwidth]{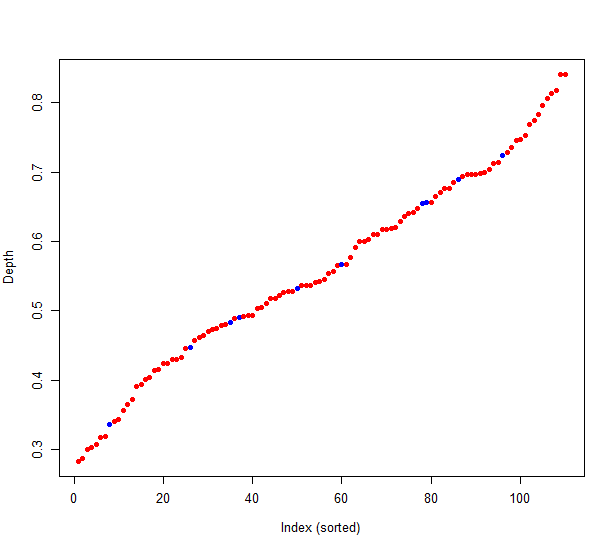}
        \caption{HCD}
    \end{subfigure}

    \caption{Ordered depth values for the first contamination experiment,
    where blue points correspond to the \texttt{i} outliers.}
    \label{fig:ai_order}
\end{figure}

\begin{figure}[h]
    \begin{subfigure}[b]{0.48\textwidth}
        \centering
        \includegraphics[width=\textwidth]{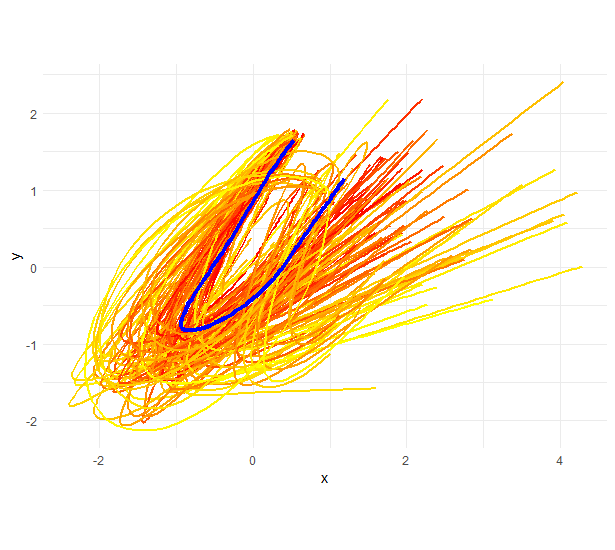}
        \caption{CBD}
    \end{subfigure}
    \hfill
    \begin{subfigure}[b]{0.48\textwidth}
        \centering
        \includegraphics[width=\textwidth]{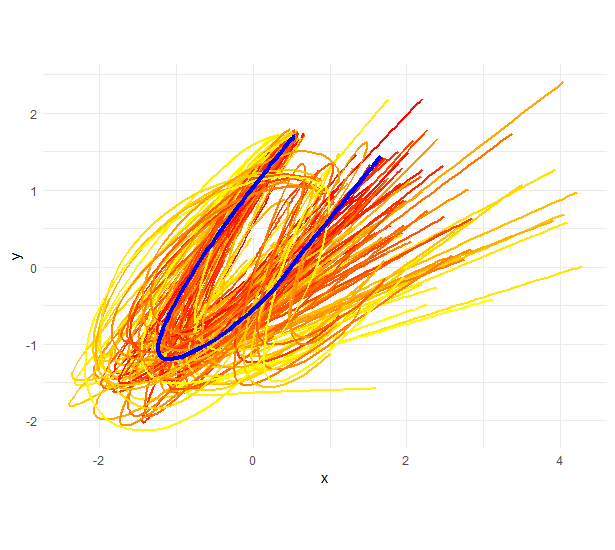}
        \caption{CBD (length adjusted)}
    \end{subfigure}

    \begin{subfigure}[b]{0.48\textwidth}
        \centering
        \includegraphics[width=\textwidth]{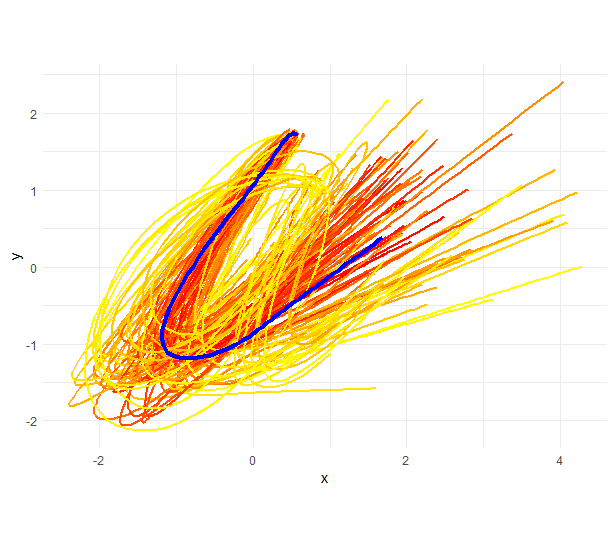}
        \caption{FW-CBD}
    \end{subfigure}
    \hfill
    \begin{subfigure}[b]{0.48\textwidth}
        \centering
        \includegraphics[width=\textwidth]{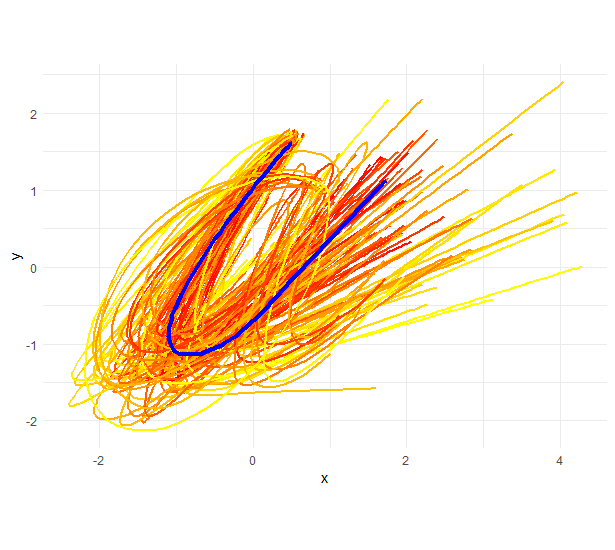}
        \caption{HCD}
    \end{subfigure}

    \caption{Depth-colored curve plots for the second contamination experiment
    ($100$ \texttt{i} trajectories and $10$ \texttt{a} trajectories).}
    \label{fig:ia_graph}
\end{figure}

\begin{figure}[h]
    \begin{subfigure}[b]{0.48\textwidth}
        \centering
        \includegraphics[width=\textwidth]{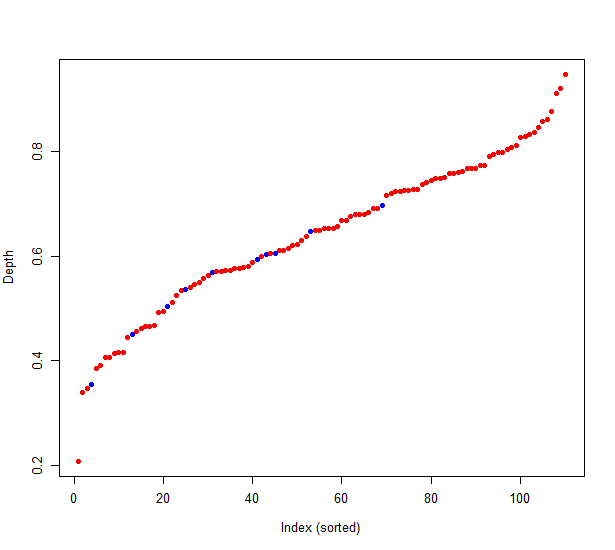}
        \caption{CBD}
    \end{subfigure}
    \hfill
    \begin{subfigure}[b]{0.48\textwidth}
        \centering
        \includegraphics[width=\textwidth]{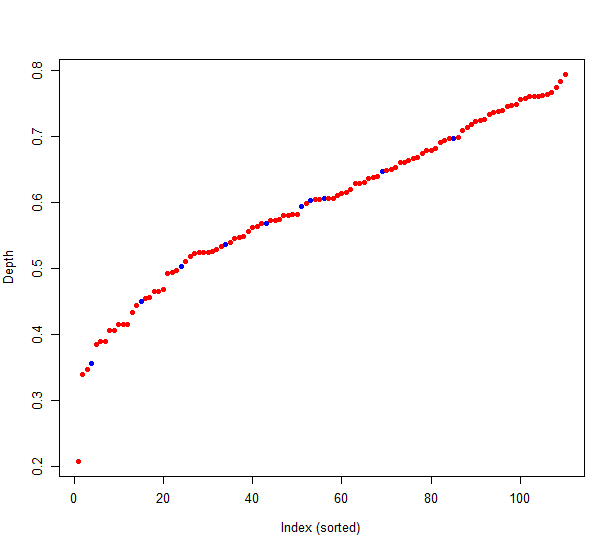}
        \caption{CBD (length adjusted)}
    \end{subfigure}

    \begin{subfigure}[b]{0.48\textwidth}
        \centering
        \includegraphics[width=\textwidth]{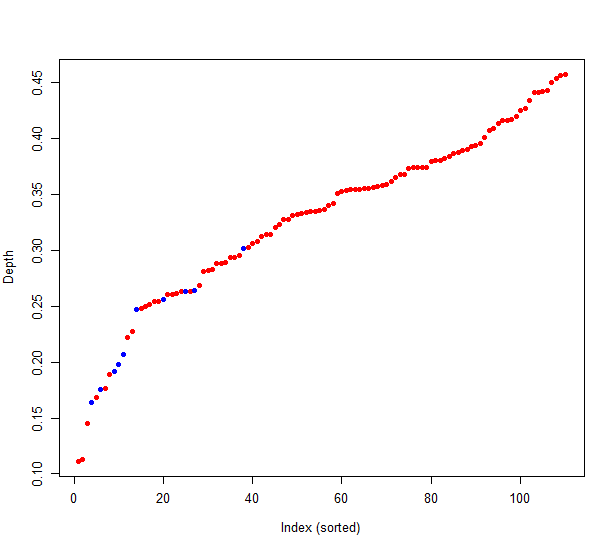}
        \caption{FW-CBD}
    \end{subfigure}
    \hfill
    \begin{subfigure}[b]{0.48\textwidth}
        \centering
        \includegraphics[width=\textwidth]{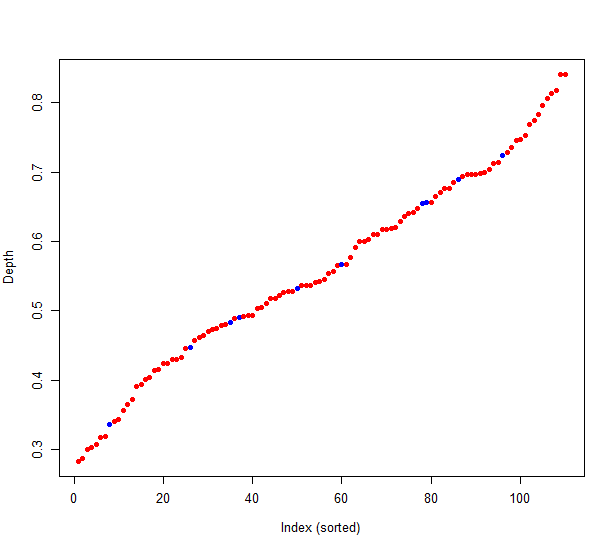}
        \caption{HCD}
    \end{subfigure}
    \caption{Ordered depth values for the second contamination experiment,
    where blue points correspond to the \texttt{a} outliers.}
    \label{fig:ia_order}
\end{figure}
\clearpage
\subsection{Classification}
We next study whether the same geometric differences among the depth notions
translate into classification performance.
For both data sets, we use the depth values with respect to the two training
classes as a two-dimensional depth representation and apply the DD$\alpha$
classifier~\citep{pokotylo2019depth} to the resulting points.
The first task uses the hand-written letters \texttt{a} and \texttt{i} as in Section~\ref{sec:OD}; the second uses curve representations of the MNIST
digits \texttt{1} and \texttt{7}, using the pre-processed data of~\citet{de2021depth}.

Table~\ref{tab:acc_ai_depth_methods} reports the results for the \texttt{a}/\texttt{i} task.
Length adjustment helps considerably: mean accuracy rises from $0.749$ for raw CBD to $0.955$ for length-adjusted CBD, a gain of about $0.21$ per split.
FW-CBD is second, with HCD and raw CBD clearly worse.
This matches the outlier-detection results above and the length-heterogeneity explanation. Even after scale normalization, \texttt{a} and \texttt{i} still differ systematically in arc length, so unpenalized CBD assigns overly large depth to short \texttt{i}-type trajectories embedded in the \texttt{a} class.

The MNIST \texttt{1}/\texttt{7} task in Table~\ref{tab:acc_mnist17_depth_methods} shows a different pattern.
All four methods perform well, and the gain from length adjustment is much smaller. The arc-length distributions of the two digit classes are closer, so the length penalty has less to do. Length-adjusted CBD still attains the best mean accuracy, with FW-CBD again second.
\begin{table}[th]
\centering
\caption{Classification accuracy (mean $\pm$ SD over $10$ random splits) for
handwritten letters \texttt{a} and \texttt{i}. In each split, $100$ curves per
class are used for training and the remaining curves ($71$ \texttt{a} and
$74$ \texttt{i}) form the test set. Ranks are computed within each split across
the four methods ($1=\text{best}$), and reported as mean ranks over splits.}
\label{tab:acc_ai_depth_methods}
\begin{tabular}{lcc}
\hline
\textbf{Method} & \textbf{Accuracy (mean $\pm$ SD)} & \textbf{Mean rank} \\
\hline
CBD (raw) & $0.749 \pm 0.046$ & 3.7 \\
CBD (length-adjusted) & $0.955 \pm 0.032$ & 1.0 \\
FW-CBD & $0.891 \pm 0.019$ & 2.0 \\
HCD & $0.776 \pm 0.044$ & 3.3 \\
\hline
\end{tabular}
\end{table}

\begin{table}[th]
\centering
\caption{Classification accuracy (mean $\pm$ SD over $10$ random splits) for MNIST
digit curves \texttt{1} and \texttt{7}. In each split, $50$ curves per class are
used for training and $50$ per class form the test set. Ranks are computed within
each split across the four methods ($1=\text{best}$), and reported as mean ranks
over splits.}
\label{tab:acc_mnist17_depth_methods}
\begin{tabular}{lcc}
\hline
\textbf{Method} & \textbf{Accuracy (mean $\pm$ SD)} & \textbf{Mean rank} \\
\hline
CBD (raw) & $0.922 \pm 0.041$ & 3.1 \\
CBD (length-adjusted) & $0.975 \pm 0.018$ & 1.1 \\
FW-CBD & $0.955 \pm 0.013$ & 2.2 \\
HCD & $0.938 \pm 0.010$ & 2.9 \\
\hline
\end{tabular}
\end{table}
\subsection{Online signature screening on MOBISIG}
\label{sec:mobisig}

We next consider a one-class curve-screening task on the \texttt{MOBISIG} online-signature dataset.
Each signature is treated as a planar trajectory, and the goal is to decide whether a new signature is geometrically consistent with a small reference set of genuine signatures from the same user.
A depth-based approach fits this problem directly because low depth against the genuine reference set flags a potential forgery.

The dataset contains $83$ users, each with $45$ genuine signatures and
$20$ skilled forgeries.
We retain only the two-dimensional $(x,y)$ trajectory, remove non-finite rows
and consecutive duplicates, center each trajectory, and isotropically rescale
it so that the longer side of its bounding box equals $1$.
Depth is then computed on the arc-length-resampled representation.

We evaluate two one-class enrollment protocols. Protocol~A uses a single
random seed for each user and takes
$n_{\mathrm{ref}}\in\{10,20,30\}$ genuine signatures as the reference set; the
remaining genuine signatures and all skilled forgeries targeting the same user
form the test set. In Protocol~B, the first $15$ genuine signatures are used as the reference set, and the remaining genuine signatures together with all forgeries form the test set, thereby probing robustness to session shift. For every test signature
we compute CBD, FW-CBD, the length-adjusted versions of both of them and HCD.
Performance is summarized in Table~\ref{tab:mobisig_results} by per-user area
under curve (AUC) and equal error rate (EER), aggregated across users by median
and interquartile range (IQR); the Protocol~A and Protocol~B per-user
distributions appear in Figure~\ref{fig:mobisig_box}.

The results give a consistent geometric message.  FW-CBD substantially improves
over raw CBD, and the length-adjusted FW-CBD score is the strongest variant in
all Protocol~A enrollment regimes and in the Protocol~B session-shift setting.
The length adjustment also helps CBD relative to raw CBD, but the gap between length adjusted CBD and length
adjusted FW-CBD remains large.  This supports the interpretation that the
narrower FW band captures a stricter notion of signature-shape agreement, while
the length penalty corrects a separate nuisance effect caused by heterogeneous
trajectory lengths.

\begin{table}[t!]
\centering
\small
\caption{MOBISIG single-split signature-screening results. Entries are median $\pm$ IQR across the $83$ users. Mean ranks are computed within each user across the five methods (1 = best) and then averaged across users.}
\label{tab:mobisig_results}
\begin{tabular}{llccc}
\hline
\textbf{Enrollment} & \textbf{Method} & \textbf{AUC (median $\pm$ IQR)} & \textbf{EER (median $\pm$ IQR)} & \textbf{Mean rank} \\
\hline
\multicolumn{5}{l}{\emph{Protocol A: random enrollment, $n_{\mathrm{ref}}=10$}} \\
 & CBD & $0.744 \pm 0.321$ & $0.307 \pm 0.234$ & 4.181 \\
 & CBD (length-adjusted) & $0.801 \pm 0.213$ & $0.293 \pm 0.146$ & 3.476 \\
 & FW-CBD & $0.909 \pm 0.175$ & $0.161 \pm 0.146$ & 2.012 \\
 & FW-CBD (length-adjusted) & $\mathbf{0.931 \pm 0.120}$ & $\mathbf{0.146 \pm 0.139}$ & \textbf{1.590} \\
 & HCD & $0.763 \pm 0.247$ & $0.307 \pm 0.234$ & 3.741 \\
\hline
\multicolumn{5}{l}{\emph{Protocol A: random enrollment, $n_{\mathrm{ref}}=20$}} \\
 & CBD & $0.718 \pm 0.353$ & $0.310 \pm 0.300$ & 4.265 \\
 & CBD (length-adjusted) & $0.806 \pm 0.206$ & $0.245 \pm 0.200$ & 3.542 \\
 & FW-CBD & $0.918 \pm 0.150$ & $0.155 \pm 0.135$ & 1.946 \\
 & FW-CBD (length-adjusted) & $\mathbf{0.938 \pm 0.111}$ & $\mathbf{0.155 \pm 0.110}$ & \textbf{1.554} \\
 & HCD & $0.782 \pm 0.299$ & $0.290 \pm 0.222$ & 3.693 \\
\hline
\multicolumn{5}{l}{\emph{Protocol A: random enrollment, $n_{\mathrm{ref}}=30$}} \\
 & CBD & $0.723 \pm 0.337$ & $0.342 \pm 0.317$ & 4.301 \\
 & CBD (length-adjusted) & $0.810 \pm 0.213$ & $0.258 \pm 0.171$ & 3.542 \\
 & FW-CBD & $0.923 \pm 0.135$ & $0.142 \pm 0.142$ & 1.952 \\
 & FW-CBD (length-adjusted) & $\mathbf{0.943 \pm 0.088}$ & $\mathbf{0.142 \pm 0.142}$ & \textbf{1.518} \\
 & HCD & $0.803 \pm 0.270$ & $0.317 \pm 0.200$ & 3.687 \\
\hline
\multicolumn{5}{l}{\emph{Protocol B: session-shift enrollment,$n_{\mathrm{ref}}=15$}} \\
 & CBD & $0.683 \pm 0.432$ & $0.400 \pm 0.338$ & 3.964 \\
 & CBD (length-adjusted) & $0.722 \pm 0.314$ & $0.358 \pm 0.279$ & 3.361 \\
 & FW-CBD & $0.837 \pm 0.246$ & $0.258 \pm 0.200$ & 2.157 \\
 & FW-CBD (length-adjusted) & $\mathbf{0.872 \pm 0.183}$ & $\mathbf{0.242 \pm 0.183}$ & \textbf{1.934} \\
 & HCD & $0.683 \pm 0.326$ & $0.358 \pm 0.258$ & 3.584 \\
\hline
\end{tabular}
\end{table}

\begin{figure}[t!]
    \centering
    \begin{subfigure}[b]{\textwidth}
        \centering
        \includegraphics[width=\textwidth]{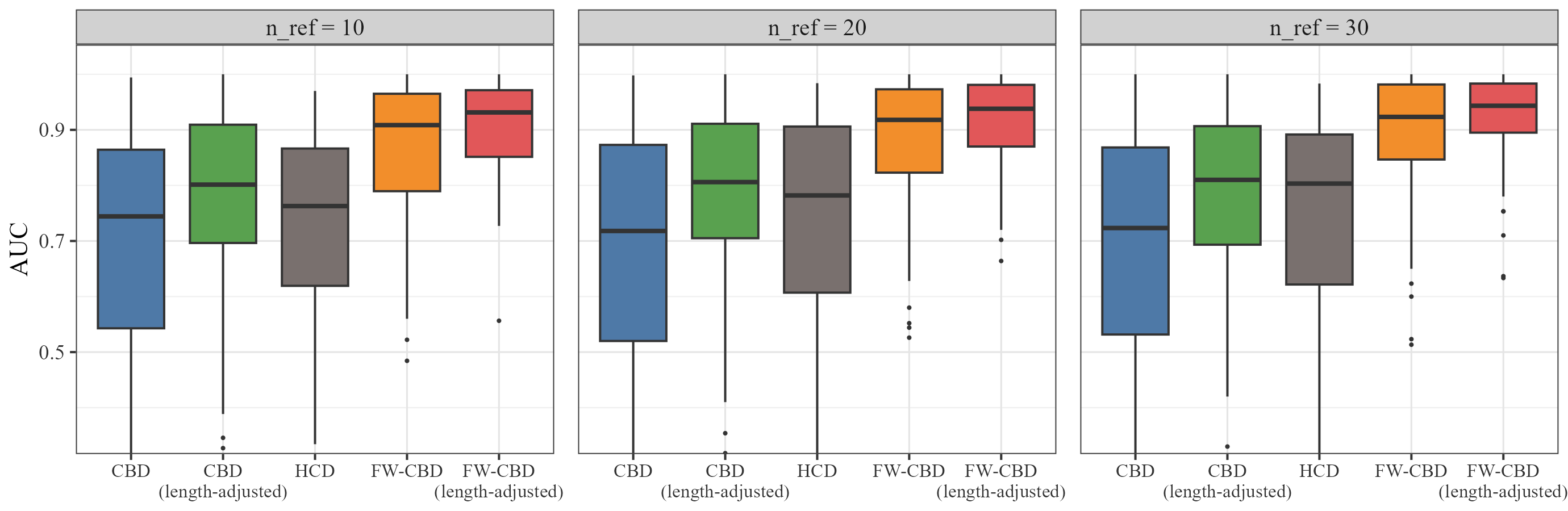}
        \caption{Protocol~A AUC.}
    \end{subfigure}

    \vspace{0.5em}
    \begin{subfigure}[b]{\textwidth}
        \centering
        \includegraphics[width=\textwidth]{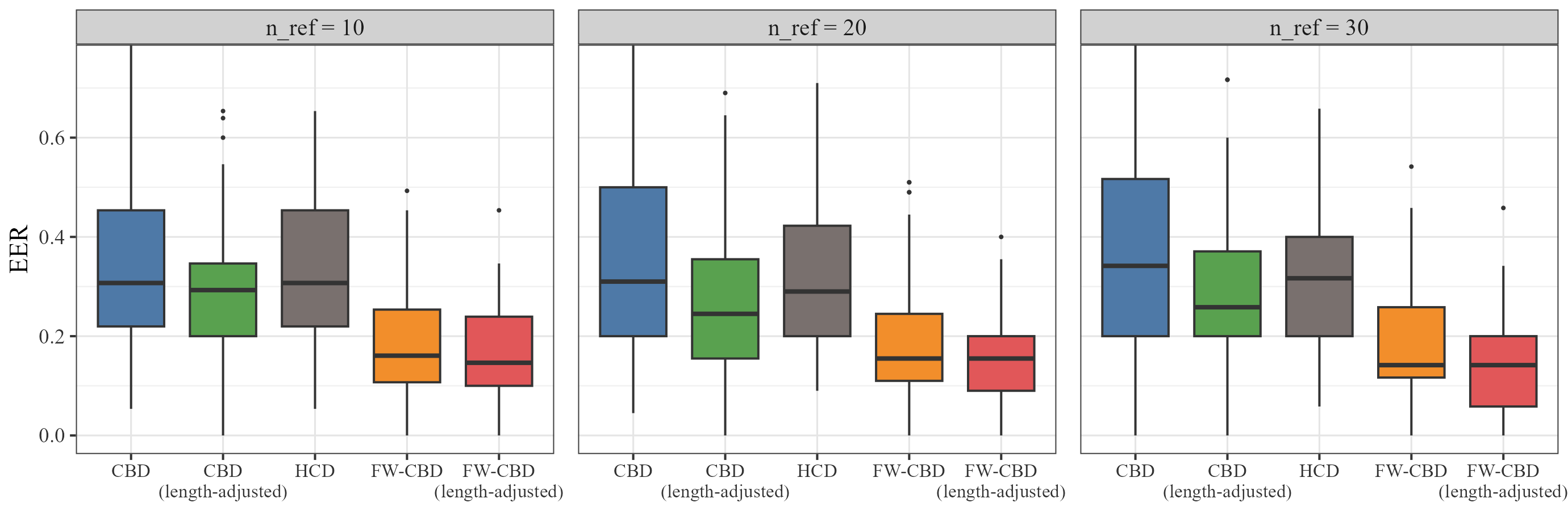}
        \caption{Protocol~A EER.}
    \end{subfigure}

    \vspace{0.5em}
    \begin{subfigure}[b]{0.49\textwidth}
        \centering
        \includegraphics[width=\textwidth]{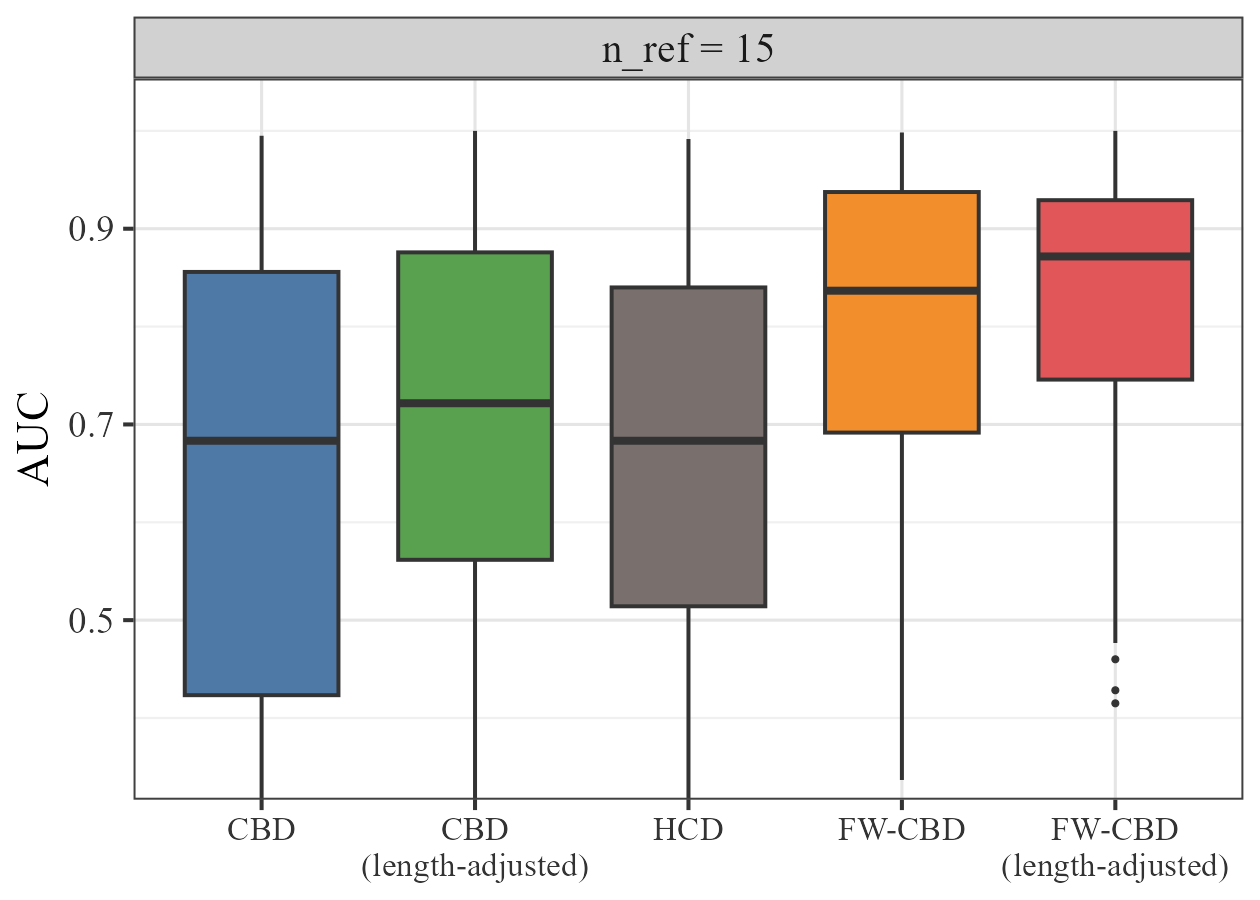}
        \caption{Protocol~B AUC.}
    \end{subfigure}
    \hfill
    \begin{subfigure}[b]{0.49\textwidth}
        \centering
        \includegraphics[width=\textwidth]{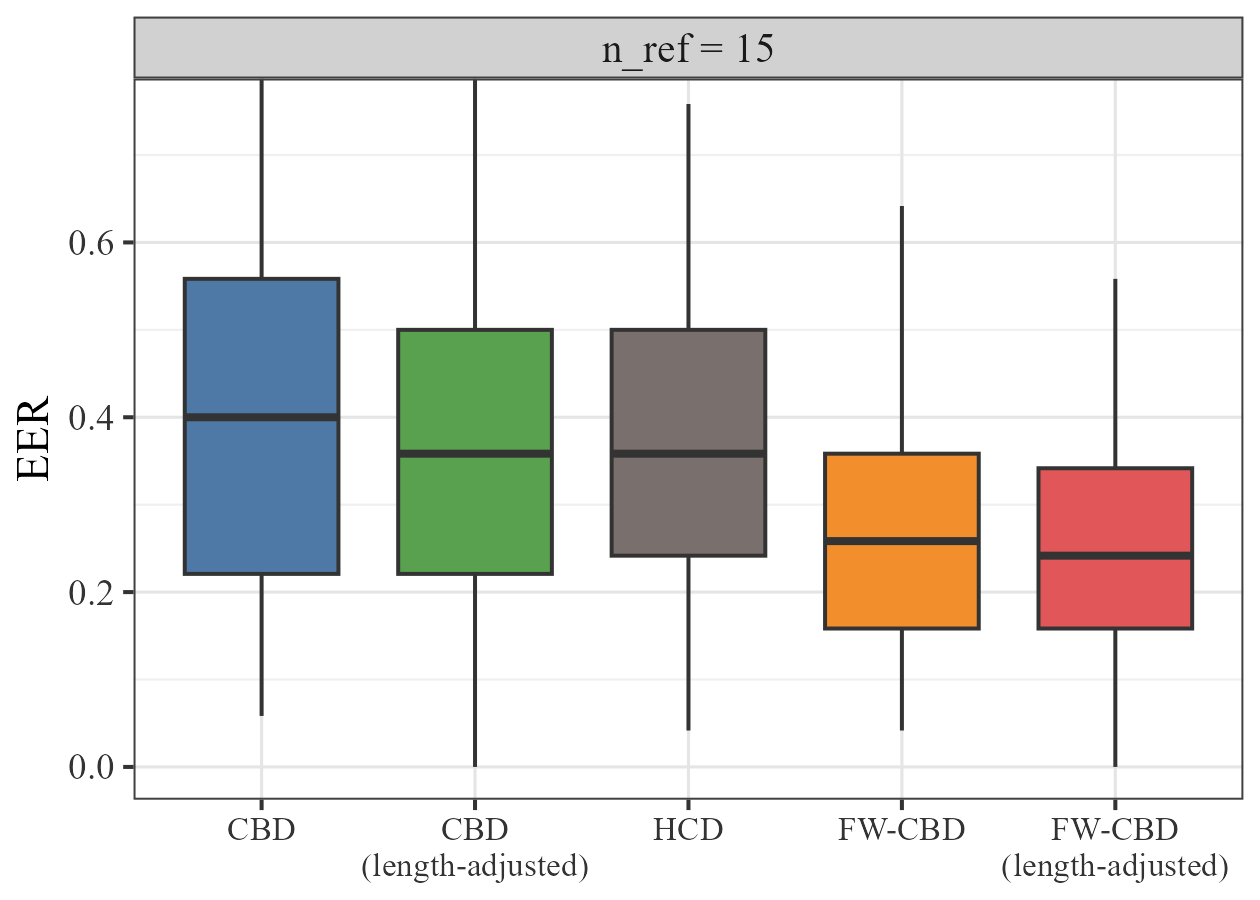}
        \caption{Protocol~B EER.}
    \end{subfigure}
    \caption{Per-user AUC and EER distributions for the five depth scores on MOBISIG. Protocol~A uses random enrollment with $n_{\mathrm{ref}}\in\{10,20,30\}$; Protocol~B uses the first-session genuine signatures as the reference set. Each box aggregates $83$ per-user values.}
    \label{fig:mobisig_box}
\end{figure}

\FloatBarrier
\subsection{Clustering from decomposed band support}
\label{sec:uci_contribution_clustering}
The scalar CBD of a target curve is an average of its in-band support over all bands generated by pairs of sample curves.  To use CBD values in clustering tasks, we assign the support contributed by each band back to the two curves that generate the band, thereby producing for each target curve a contribution profile over the observed curves.  These profiles are then symmetrized into an affinity matrix and clustered by Ward's minimum-variance agglomerative hierarchical clustering.  Unless otherwise stated, all hierarchical clustering results in this subsection use this same agglomerative criterion.

We first define the auxiliary curve distance used for attribution.  Let $I=[0,1]$ for open curves and $I=\bbS^1$ for closed curves.  For two curve classes $A,B\in\Cset$ of the same type, define the Fr\'echet-type distance
\begin{equation}
\label{eq:clust_curve_distance}
\delta(A,B)
:=
\inf_{\alpha\in\mathcal G(A),\,\beta\in\mathcal G(B)}
\inf_{\phi\in\mathcal H_I}
\sup_{t\in I}
\|\alpha(t)-\beta(\phi(t))\|_2,
\end{equation}
where $\mathcal G(\cdot)$ is the family of admissible normalized representatives from Definition~\ref{def:normapara}, and $\mathcal H_I$ is the set of orientation-preserving homeomorphisms of $I$.  In the numerical implementation, $\delta$ is computed by \texttt{dist.curves} from the \texttt{curveDepth} package~\citep{curveDepth2025}.

Let $r\in\{\mathrm{CBD},\mathrm{FW}\}$ denote the band type, with $\Rband_{\mathrm{CBD}}=\Rband$ and $\Rband_{\mathrm{FW}}=R_{\mathrm{FW}}$.  For a target curve $C_k$ and a band generated by $C_a$ and $C_c$, $a<c$, define the band-support contribution
$
q_{k,ac}^{(r)}
:=
\mu_{C_k}\!\left(\Rband_r(C_a,C_c)\right).
$
Thus the integral depth of $C_k$ with respect to the sample can be written as
\begin{equation}
\label{eq:clust_depth_decomp}
D_{\mathrm{int}}^{(r)}(C_k\mid\{C_i\}_{i=1}^n)
=
\binom{n}{2}^{-1}
\sum_{1\le a<c\le n}q_{k,ac}^{(r)}.
\end{equation}
Equation~\eqref{eq:clust_depth_decomp} shows that CBD first forms band-level supports and then averages them.  The clustering construction below keeps the band-level supports and attributes them to source curves.

For a fixed pair $(a,c)$, the support $q_{k,ac}^{(r)}$ is split between $C_a$ and $C_c$ according to the relative position of $C_k$ on the distance chord joining them.  Let $\delta_{uv}:=\delta(C_u,C_v)$ and define
$
\operatorname{clip}_{[0,1]}(x):=\min\{1,\max\{0,x\}\}.
$
The weight assigned to $C_c$ is
\begin{equation}
\label{eq:clust_weight_c}
\omega_{k\to c}^{(a,c)}
:=
\begin{cases}
\displaystyle
\operatorname{clip}_{[0,1]}\!\left(
\frac{\delta_{ka}^2+\delta_{ac}^2-\delta_{kc}^2}{2\delta_{ac}^2}
\right), & \delta_{ac}>0,\\[1.2em]
1/2, & \delta_{ac}=0,
\end{cases}
\end{equation}
and $\omega_{k\to a}^{(a,c)}:=1-\omega_{k\to c}^{(a,c)}$.  Hence a target curve closer to $C_c$ assigns more of the band support to $C_c$, and analogously for $C_a$.

The directed contribution matrix $\mathsf B^{(r)}=(\mathsf B_{kj}^{(r)})$ is defined by
\begin{equation}
\label{eq:clust_directed_contribution}
\mathsf B_{kj}^{(r)}
:=
\sum_{1\le a<c\le n}
q_{k,ac}^{(r)}
\left[
\one\{j=a,\ j\ne k\}\omega_{k\to a}^{(a,c)}
+
\one\{j=c,\ j\ne k\}\omega_{k\to c}^{(a,c)}
\right].
\end{equation}
The condition $j\ne k$ removes self-contribution, since the profile is used to measure the relation of $C_k$ to other curves rather than to explain its self-support.  We row-normalize $\mathsf B^{(r)}$ to obtain a directed contribution profile
\begin{equation}
\label{eq:clust_profile}
\mathsf P_{kj}^{(r)}
:=
\begin{cases}
\displaystyle
\mathsf B_{kj}^{(r)}\Big/\sum_{\ell\ne k}\mathsf B_{k\ell}^{(r)},
& \sum_{\ell\ne k}\mathsf B_{k\ell}^{(r)}>0,\\[0.8em]
0, & \sum_{\ell\ne k}\mathsf B_{k\ell}^{(r)}=0.
\end{cases}
\end{equation}
Finally, the directed profiles are symmetrized into an affinity matrix
\begin{equation}
\label{eq:clust_affinity}
\mathsf A_{ij}^{(r)}
:=
\frac{\mathsf P_{ij}^{(r)}+\mathsf P_{ji}^{(r)}}{2},
\qquad
\mathsf A_{ii}^{(r)}:=0.
\end{equation}
After rescaling $\mathsf A^{(r)}$ by its largest off-diagonal entry, we use
\begin{equation}
\label{eq:clust_dissimilarity}
\mathsf M_{ij}^{(r)}:=1-\mathsf A_{ij}^{(r)},\quad i\ne j,
\qquad
\mathsf M_{ii}^{(r)}:=0
\end{equation}
as the dissimilarity matrix for hierarchical clustering.  The full algorithm is summarized in Algorithm~\ref{alg:contribution_affinity_clustering}.

\begin{algorithm}[ht]
\caption{Contribution-affinity clustering from decomposed CBD}
\label{alg:contribution_affinity_clustering}
\begin{algorithmic}[1]
\Function{ContributionAffinityClustering}{$C_1,\ldots,C_n$, $K$, $r$}
\State Compute $\delta_{ij}=\delta(C_i,C_j)$ using \eqref{eq:clust_curve_distance}.
\State Compute the band supports $q_{k,ac}^{(r)}$ for all $k$ and all $a<c$.
\State Form $\mathsf B^{(r)}$ using \eqref{eq:clust_directed_contribution}.
\State Row-normalize $\mathsf B^{(r)}$ to obtain $\mathsf P^{(r)}$ using \eqref{eq:clust_profile}.
\State Symmetrize $\mathsf P^{(r)}$ to obtain $\mathsf A^{(r)}$ using \eqref{eq:clust_affinity}.
\State Rescale $\mathsf A^{(r)}$ by its largest off-diagonal entry and form $\mathsf M^{(r)}$ using \eqref{eq:clust_dissimilarity}.
\State Apply hierarchical clustering to $\mathsf M^{(r)}$ and cut the dendrogram into $K$ clusters.
\State \Return cluster labels $\hat z_1,\ldots,\hat z_n$.
\EndFunction
\end{algorithmic}
\end{algorithm}
\begin{table}[t!]
\centering
\small
\caption{Clustering comparisons on UCI Character Trajectories. Accuracy is computed after optimal label matching. The CBD and FW-CBD columns use Algorithm~\ref{alg:contribution_affinity_clustering} with the global band $\Rband$ and the fast-walk band $R_{\mathrm{FW}}$, respectively.}
\label{tab:uci_depth_clustering}
\begin{tabular}{lccc}
\toprule
\textbf{Letters} & \textbf{Distance} & \textbf{CBD affinity} & \textbf{FW-CBD affinity} \\
\midrule
\texttt{aho} & 0.511 & 0.844 & 0.844 \\
\texttt{mwz} & 0.533 & 0.933 & 0.956 \\
\texttt{nor} & 0.511 & 0.867 & 0.911 \\
\texttt{uvw} & 0.867 & 0.933 & 0.889 \\
\texttt{bgy} & 0.889 & 0.867 & 0.889 \\
\midrule
\text{Mean} & 0.662 & 0.889 & 0.898 \\
\bottomrule
\end{tabular}
\end{table}
Table~\ref{tab:uci_depth_clustering} reports a small analysis on UCI Character Trajectories.  We consider five three-letter subsets designed to be nontrivial for clustering: \texttt{aho}, \texttt{mwz}, \texttt{nor}, \texttt{uvw}, and \texttt{bgy}.  For each task, we randomly select $15$ curves from each letter class using seed $42$, yielding $45$ curves in total.  The original velocity records are integrated into planar paths, centered, scaled, and resampled to $80$ points.  The baseline applies hierarchical clustering directly to the distance matrix $\delta$.  The contribution-affinity construction improves substantially on \texttt{aho}, \texttt{mwz}, and \texttt{nor}; it remains competitive on \texttt{uvw}, where the direct distance baseline is already strong.  In \texttt{bgy}, depth information does not improve the result.  These results are intended as an exploratory application rather than a comprehensive clustering study.  They indicate that the decomposed CBD and FW-CBD representations may provide complementary information to the curve distance.

\section{Conclusion and future work}
\label{sec:discussion}
The CBD family gives a parameterization-invariant, band-based notion of curve centrality for unparameterized planar curves.
Compared with halfspace-based curve depths, it measures how much of a curve lies inside the region between two others, and so orders curves by band occupancy rather than by halfspace separability.
Empirically, FW-CBD tends to improve discrimination on highly overlapping classes, while length penalization helps when the competing classes differ systematically in arc length.

Two limitations deserve mention.
The present development is essentially planar. While the convex-combination band
extends naturally to $\R^d$, the FW band relies on planar enclosure geometry
and would need to be reformulated for $d\ge 3$.
CBD and its variants also require $O(n^2)$ pairwise band constructions, which limits scalability for large samples.
Higher-dimensional extensions of the band construction and scalable approximations to the pairwise depth computation are therefore the natural next steps.

\appendix
\section{Proofs for Section~\ref{sec:fw_props}}
\label{app:proofs}

This appendix collects the proofs referenced in Section~\ref{sec:fw_props} and the FW half of Lemma~\ref{lemma:band_equivariance}.
\begin{proof}[Proof of Proposition~\ref{prop:fw_welldef} (Well-definedness)]
\emph{Nonemptiness of $\mathcal M(A,B)$.}
For an open $C\in\Cset$, $\mathcal G(C)=\{\beta_C,\beta_C^\leftarrow\}$ is finite.
For a closed $C\in\Cset$, $\mathcal G(C)$ is the image of the compact parameter space $\bbS^1\times\{\pm 1\}$ under the shift-and-orientation map; equipped with the quotient topology induced by that map, $\mathcal G(C)$ is a compact topological space, and the endpoint map $\alpha\mapsto(\alpha(0),\alpha(1))$ is continuous on it.
In all four mixed cases (open/open, open/closed, closed/open, closed/closed), the product $\mathcal G(A)\times\mathcal G(B)$ is compact, and $\Delta:\mathcal G(A)\times\mathcal G(B)\to\R_{\ge 0}$ is continuous (the endpoint maps $(\alpha,\beta)\mapsto(\alpha(0),\alpha(1),\beta(0),\beta(1))$ are continuous in each component).
Hence $\Delta$ attains its minimum, and $\mathcal M(A,B)\neq\varnothing$.
Moreover $\mathcal M(A,B)$ is closed because it is the preimage of the singleton $\{\min\Delta\}$ under the continuous map $\Delta$, and is therefore compact.

\emph{Each $E(\alpha,\beta)$ is closed and Borel.}
By Definition~\ref{def:fwb}, $W(\alpha,\beta)=\gamma_{\alpha,\beta}(\bbS^1)$ is compact.
The mod-$2$ winding number $n_2(\gamma_{\alpha,\beta};\cdot)$ defined in \eqref{eq:n2-degree} is a continuous integer-valued function on each connected component of $\R^2\setminus W$, hence locally constant; reducing mod $2$, $n_2$ is constant on each component.
On the unique unbounded component $U_\infty$ of $\R^2\setminus W$, $n_2$ vanishes.
Indeed, $n_2$ is constant on $U_\infty$, and for $p$ sufficiently far from $W$ the map $(\gamma_{\alpha,\beta}(t)-p)/\|\gamma_{\alpha,\beta}(t)-p\|$ is contained in an arbitrarily small arc of $\bbS^1$, so its degree is $0$.
Write
\[
P_1:=\bigl\{p\in\R^2\setminus W:\,n_2(\gamma_{\alpha,\beta};p)=1\bigr\},
\qquad
E(\alpha,\beta)=W\cup P_1.
\]
$P_1$ is open because it is a union of open connected components of $\R^2\setminus W$.
If $x\in\R^2\setminus(W\cup P_1)$, then $x$ is in a parity-$0$ component of $\R^2\setminus W$, which is open, so $x$ has an open neighborhood disjoint from $W\cup P_1$.
Therefore the complement of $W\cup P_1$ is open, i.e.\ $E(\alpha,\beta)$ is closed and hence Borel.

\emph{Uniform boundedness and compactness of $R_{\mathrm{FW}}$.}
Let $K\subset\R^2$ be a closed Euclidean ball containing $\Gamma_A\cup\Gamma_B$.
Since $K$ is convex, every closure chord between endpoints of admissible representatives lies in $K$, so $W(\alpha,\beta)\subseteq K$ for every admissible pair.
The unbounded component of $\R^2\setminus W(\alpha,\beta)$ then contains $\R^2\setminus K$.
Since $n_2$ vanishes on the unbounded component, all parity-$1$ components are contained in $K$; together with $W(\alpha,\beta)\subseteq K$, this gives $E(\alpha,\beta)\subseteq K$. It follows that
$
\bigcup_{(\alpha,\beta)\in\mathcal M(A,B)}E(\alpha,\beta)\subseteq K,
$
so
$
R_{\mathrm{FW}}(A,B)
=
\cl\!\Bigl(\bigcup_{(\alpha,\beta)\in\mathcal M(A,B)}E(\alpha,\beta)\Bigr)
\subseteq K.
$
Thus $R_{\mathrm{FW}}(A,B)$ is closed by construction and bounded, hence compact and Borel-measurable.
Therefore $P_{\mathrm{FW}}(C\mid A,B)=\mu_C(R_{\mathrm{FW}}(A,B))$ is well-defined for every $C\in\Cset$.
\end{proof}

\begin{proof}[Proof of Proposition~\ref{prop:fw_param} (Reparameterization invariance)]
We show that $\mathcal G(C)$ is intrinsic to $C\in\Cset$; the conclusion for $R_{\mathrm{FW}}(A,B)$ then follows because Definition~\ref{def:normapara} and \eqref{eq:fw-band} use $A$ and $B$ only through $\mathcal G(A)$ and $\mathcal G(B)$.

Fix $C\in\Cset$ and let $\beta_C^{\arc},\widetilde\beta_C^{\arc}$ be two arc-length representatives of $C$.
By Theorem 2.7.6 of \citet{burago2001course} the arc-length representative is unique up to a precise list of admissible reparameterizations:
\begin{itemize}
\item For $C$ open: $\widetilde\beta_C^{\arc}=\beta_C^{\arc}$ or $\widetilde\beta_C^{\arc}(s)=\beta_C^{\arc}(L(C)-s)$.
\item For $C$ closed: $\widetilde\beta_C^{\arc}(s)=\beta_C^{\arc}(s+\theta\bmod L(C))$ or $\widetilde\beta_C^{\arc}(s)=\beta_C^{\arc}(\theta-s\bmod L(C))$ for some $\theta\in[0,L(C))$.
\end{itemize}
The corresponding normalized representatives $\widetilde\beta_C(t)=\widetilde\beta_C^{\arc}(L(C)t)$ therefore lie in $\mathcal G(C)$ as defined from $\beta_C$.
We verify that the family generated by $\widetilde\beta_C$ via Definition~\ref{def:normapara} equals $\mathcal G(C)$.

\emph{Open case.}
If $\widetilde\beta_C=\beta_C$ the families are identical. If $\widetilde\beta_C=\beta_C^\leftarrow$, then $(\widetilde\beta_C)^\leftarrow=(\beta_C^\leftarrow)^\leftarrow=\beta_C$, so the generated family $\{\widetilde\beta_C,(\widetilde\beta_C)^\leftarrow\}=\{\beta_C^\leftarrow,\beta_C\}=\mathcal G(C)$.

\emph{Closed case, shift.}
If $\widetilde\beta_C=\beta_C^\phi$ for some $\phi\in[0,1)$, we apply Definition~\ref{def:normapara} with $\widetilde\beta_C$ in place of $\beta_C$: the shifts are $\widetilde\beta_C^\theta(t):=\widetilde\beta_C(t{+}\theta\bmod 1)$ and their reversals $(\widetilde\beta_C^\theta)^\leftarrow(t):=\widetilde\beta_C(\theta{-}t\bmod 1)$.
Computing:
\[
\widetilde\beta_C^\theta(t)=\beta_C(t{+}\theta{+}\phi\bmod 1)=\beta_C^{\theta+\phi\bmod 1}(t),
\quad
(\widetilde\beta_C^\theta)^\leftarrow(t)=\beta_C(\phi{+}\theta{-}t\bmod 1)=(\beta_C^{\phi+\theta\bmod 1})^\leftarrow(t).
\]
As $\theta$ ranges over $[0,1)$, so does $\theta{+}\phi\bmod 1$, so the shift-family and reversal-family generated by $\widetilde\beta_C$ are precisely $\{\beta_C^\psi:\psi\in[0,1)\}$ and $\{(\beta_C^\psi)^\leftarrow:\psi\in[0,1)\}$ respectively.
Hence $\mathcal G_{\widetilde\beta_C}(C)=\mathcal G_{\beta_C}(C)$.

\emph{Closed case, reversal.}
If $\widetilde\beta_C=(\beta_C^\phi)^\leftarrow$, applying Definition~\ref{def:normapara} to $\widetilde\beta_C$ gives
\[
\widetilde\beta_C^\theta(t)=\beta_C(\phi{-}\theta{-}t\bmod 1)=(\beta_C^{\phi-\theta\bmod 1})^\leftarrow(t),
\quad
(\widetilde\beta_C^\theta)^\leftarrow(t)=\beta_C^{\phi-\theta\bmod 1}(t).
\]
Note that when $\widetilde\beta_C$ is itself a reversal, shifts and reversals swap roles in the generated family: shifts of $\widetilde\beta_C$ land in the reversal component of $\mathcal G(C)$, and reversals of $\widetilde\beta_C$ land in the shift component.
As $\theta$ ranges over $[0,1)$, so does $\phi{-}\theta\bmod 1$, so both components of $\mathcal G_{\widetilde\beta_C}(C)$ match those of $\mathcal G_{\beta_C}(C)$.
Hence $\mathcal G_{\widetilde\beta_C}(C)=\mathcal G_{\beta_C}(C)$.

Hence $\mathcal G(C)$ is intrinsic to $C\in\Cset$.
Consequently $R_{\mathrm{FW}}(A,B)$ is intrinsic to $(A,B)\in\Cset\times\Cset$, since it is defined through $\mathcal G(A)$, $\mathcal G(B)$, and the closure operation, all of which depend only on the equivalence classes $A$ and $B$.
\end{proof}

\begin{proof}[Proof of Proposition~\ref{prop:fw_sym} (Symmetry)]
Fix any admissible pair $(\alpha,\beta)\in\mathcal G(A)\times\mathcal G(B)$.
Since line segments are symmetric as sets, $[x,y]=[y,x]$, the boundary trace satisfies $W(\alpha,\beta)=W(\beta,\alpha)$.

For the mod-$2$ winding interior, reversing the orientation of the closed walk does not change its mod-$2$ winding number around any point.
Swapping $(\alpha,\beta)\mapsto(\beta,\alpha)$ in Definition~\ref{def:fwb} yields the loop $\beta\ast[\beta(1),\alpha(1)]\ast\alpha^\leftarrow\ast[\alpha(0),\beta(0)]$, which is obtained from $\gamma_{\alpha,\beta}$ by a cyclic re-parameterization composed with overall reversal; both operations preserve mod-$2$ winding number around every point of $\R^2\setminus W$.
Hence
\[
\{p\notin W(\alpha,\beta):\,n_2(\gamma_{\alpha,\beta};p)=1\}
=\{p\notin W(\beta,\alpha):\,n_2(\gamma_{\beta,\alpha};p)=1\},
\]
and $E(\alpha,\beta)=E(\beta,\alpha)$.

The closure score is also symmetric, $\Delta(\alpha,\beta)=\Delta(\beta,\alpha)$, so the swap $(\alpha,\beta)\mapsto(\beta,\alpha)$ defines a bijection $\mathcal M(A,B)\leftrightarrow\mathcal M(B,A)$.
Therefore $\bigcup_{\mathcal M(A,B)} E(\alpha,\beta)=\bigcup_{\mathcal M(B,A)} E(\beta,\alpha)$, and taking closures gives $R_{\mathrm{FW}}(A,B)=R_{\mathrm{FW}}(B,A)$.
\end{proof}

\begin{proof}[Proof of Proposition~\ref{prop:fw_subset_global} (FW band is contained in the global band)]
Fix $\alpha\in\mathcal{G}(A)$ and $\beta\in\mathcal{G}(B)$, and let
$\sigma_1$ and $\sigma_0$ parameterize the closure chords
$[\alpha(1),\beta(1)]$ from $\alpha(1)$ to $\beta(1)$ and
$[\beta(0),\alpha(0)]$ from $\beta(0)$ to $\alpha(0)$, respectively.
Let $\delta$ parameterize $[\beta(1),\alpha(0)]$ from $\beta(1)$ to
$\alpha(0)$, and define
\[
\ell_A:=\alpha*\sigma_1*\delta,
\qquad
\ell_B:=\delta^{-1}*\beta^{\leftarrow}*\sigma_0.
\]
Consider
\[
K_A:=\bigcup_{x\in\Gamma_A}[x,\beta(1)],
\qquad
K_B:=\bigcup_{y\in\Gamma_B}[\alpha(0),y].
\]
By the definition of the global band,
\[
K_A,K_B\subseteq\mathcal{R}(A,B).
\]
Moreover, $K_A$ is star-shaped with respect to $\beta(1)$ and
$K_B$ with respect to $\alpha(0)$, while
$\ell_A(S^1)\subseteq K_A$ and $\ell_B(S^1)\subseteq K_B$.
Now let $p\notin\mathcal{R}(A,B)$. Since $K_A$ and $K_B$ are star-shaped and avoid $p$, the loops $\ell_A$ and $\ell_B$ can each be continuously contracted to a point without passing through $p$. By homotopy invariance of the mod-$2$ winding number,
\[
n_2(\ell_A;p)=n_2(\ell_B;p)=0.
\]
Furthermore,
\[
\ell_A*\ell_B
=
\alpha*\sigma_1*\delta*\delta^{-1}*
\beta^{\leftarrow}*\sigma_0.
\]
Since $\delta\subseteq\mathcal{R}(A,B)$, the backtracking path
$\delta*\delta^{-1}$ is null-homotopic in
$\mathbb{R}^2\setminus\{p\}$. Thus $\ell_A*\ell_B$ is homotopic to $\gamma_{\alpha,\beta}$. By homotopy invariance and additivity of the mod-$2$ winding number,
\[
n_2(\gamma_{\alpha,\beta};p)
=
n_2(\ell_A;p)+n_2(\ell_B;p)
=
0.
\]
Therefore
$
\{p\in\mathbb{R}^2\setminus W(\alpha,\beta):
n_2(\gamma_{\alpha,\beta};p)=1\}
\subseteq\mathcal{R}(A,B).
$ Since also
$W(\alpha,\beta)\subseteq\mathcal{R}(A,B)$, Definition~\ref{def:fwb} gives
\[
E(\alpha,\beta)\subseteq\mathcal{R}(A,B).
\]
This holds for every admissible pair, and hence
\[
\bigcup_{(\alpha,\beta)\in\mathcal{M}(A,B)}
E(\alpha,\beta)
\subseteq\mathcal{R}(A,B).
\]
Finally, $\mathcal{R}(A,B)$ is compact, being the continuous image of
$\Gamma_A\times\Gamma_B\times[0,1]$ under the convex-combination map,
and is therefore closed. Taking closures yields
$
\mathcal{R}_{\mathrm{FW}}(A,B)\subseteq\mathcal{R}(A,B).
$
\end{proof}

\begin{proof}[Proof of the FW half of Lemma~\ref{lemma:band_equivariance}]
Let $g(x)=rQx+b$ with $r>0$ and $Q$ orthogonal.
We first verify that $g$ induces a bijection
$\mathcal G(A)\to\mathcal G(g(A))$ for every $A\in\Cset$.
If $\alpha\in\mathcal G(A)$ is a constant-speed representative of $A$ with speed
$L(A)$, then $g\circ\alpha$ is continuous with
\[
|(g\circ\alpha)'(t)|
=
r\,|\alpha'(t)|
=
r\,L(A)
=
L(g(A))
\]
almost everywhere, since a similarity scales Euclidean speeds and arc length by
$r$.
By the arc-length uniqueness described in Section~\ref{sec:curve_space},
constant-speed representatives of $g(A)$ with speed $L(g(A))$ are unique up to
orientation reversal in the open case and up to circular shift and reversal in
the closed case. Hence $g\circ\alpha\in\mathcal G(g(A))$.
Conversely, for any $\widetilde\alpha\in\mathcal G(g(A))$,
$g^{-1}\circ\widetilde\alpha\in\mathcal G(A)$ by the same argument applied to
$g^{-1}$.
The map $\alpha\mapsto g\circ\alpha$ is therefore a bijection
$\mathcal G(A)\to\mathcal G(g(A))$, and likewise for $B$.
Thus the product map
\[
T_g:(\alpha,\beta)\mapsto(g\circ\alpha,g\circ\beta)
\]
is a bijection from $\mathcal G(A)\times\mathcal G(B)$ to
$\mathcal G(g(A))\times\mathcal G(g(B))$.

We next prove that, for every admissible pair $(\alpha,\beta)$,
\[
g(E(\alpha,\beta))=E(g\circ\alpha,g\circ\beta).
\]
First suppose that both $A$ and $B$ are closed.
By Lemma~\ref{lemma:closed_parity_decomp},
\[
E(\alpha,\beta)
=
\widehat E_2(A,B)\cup[\alpha(0),\beta(0)].
\]
We claim that
\[
g(\widehat E_2(A,B))=\widehat E_2(g(A),g(B)).
\]
Indeed, $g(\Gamma_A)=\Gamma_{g(A)}$ and $g(\Gamma_B)=\Gamma_{g(B)}$.
Moreover, for any closed curve $C$ and any $p\in\R^2\setminus\Gamma_C$,
the same mod-$2$ degree argument gives
\[
n_2(g(C);g(p))=n_2(C;p).
\]
To see this, if $\eta\in\mathcal G(C)$ and
\[
\varphi_p(t)
:=
\frac{\eta(t)-p}{\|\eta(t)-p\|},
\]
then the corresponding normalized direction map for $g\circ\eta$ around
$g(p)$ is $Q\varphi_p(t)$.
Composition with $Q\in O(2)$ multiplies the integer degree by
$\det Q\in\{+1,-1\}$, and this sign disappears modulo $2$.
Hence $g(I_2(C))=I_2(g(C))$.
Since $g$ is bijective and preserves symmetric difference,
\[
g\bigl(I_2(A)\triangle I_2(B)\bigr)
=
I_2(g(A))\triangle I_2(g(B)).
\]
Thus $g(\widehat E_2(A,B))=\widehat E_2(g(A),g(B))$.
Also,
\[
g([\alpha(0),\beta(0)])
=
[g(\alpha(0)),g(\beta(0))].
\]
Applying Lemma~\ref{lemma:closed_parity_decomp} to
$g\circ\alpha\in\mathcal G(g(A))$ and
$g\circ\beta\in\mathcal G(g(B))$ gives
\[
g(E(\alpha,\beta))=E(g\circ\alpha,g\circ\beta).
\]

It remains to consider the cases in which at least one of $A$ and $B$ is open.
For every admissible pair $(\alpha,\beta)$, $g$ sends curves to curves and
segments to segments, so
\[
g\bigl(W(\alpha,\beta)\bigr)=W(g\circ\alpha,g\circ\beta),
\qquad
g\circ\gamma_{\alpha,\beta}
=
\gamma_{g\circ\alpha,g\circ\beta}.
\]
We verify directly that the mod-$2$ degree is preserved.
Fix $p\in\R^2\setminus W(\alpha,\beta)$.
Since
\[
g\bigl(W(\alpha,\beta)\bigr)=W(g\circ\alpha,g\circ\beta),
\]
we have $g(p)\notin W(g\circ\alpha,g\circ\beta)$.
Let
\[
\varphi_p(t)
:=
\frac{\gamma_{\alpha,\beta}(t)-p}
{\|\gamma_{\alpha,\beta}(t)-p\|},
\qquad
\varphi'_{g(p)}(t)
:=
\frac{(g\circ\gamma_{\alpha,\beta})(t)-g(p)}
{\|(g\circ\gamma_{\alpha,\beta})(t)-g(p)\|}.
\]
Since $g(y)-g(p)=rQ(y-p)$ and
$\|g(y)-g(p)\|=r\|y-p\|$, we have
\[
\varphi'_{g(p)}(t)=Q\varphi_p(t).
\]
Left composition with $Q\in O(2)$ multiplies the integer Brouwer degree by
$\det Q\in\{+1,-1\}$.
Since $-1\equiv 1\pmod 2$, its reduction modulo $2$ is unchanged.
Therefore
\[
n_2(\gamma_{g\circ\alpha,g\circ\beta};g(p))
=
\deg(\varphi'_{g(p)})\bmod 2
=
\deg(\varphi_p)\bmod 2
=
n_2(\gamma_{\alpha,\beta};p).
\]
Since $g$ is bijective and maps $W(\alpha,\beta)$ onto
$W(g\circ\alpha,g\circ\beta)$, it maps the parity-$1$ subset of
$\R^2\setminus W(\alpha,\beta)$ bijectively onto the parity-$1$ subset of
$\R^2\setminus W(g\circ\alpha,g\circ\beta)$.
Hence again
\[
g(E(\alpha,\beta))=E(g\circ\alpha,g\circ\beta).
\]
For the closure score,
$
\Delta(g\circ\alpha,g\circ\beta)
=
r\,\Delta(\alpha,\beta).
$
Combined with the bijection above, this implies that the product map
$T_g$ sends $\mathcal M(A,B)$ bijectively onto
$\mathcal M(g(A),g(B))$.
Taking the union over $\mathcal M(A,B)$ and applying $g$ pointwise gives
\[
g\!\Bigl(\bigcup_{(\alpha,\beta)\in\mathcal M(A,B)}E(\alpha,\beta)\Bigr)
=
\bigcup_{(\alpha,\beta)\in\mathcal M(A,B)}
E(g\circ\alpha,g\circ\beta)
=
\bigcup_{(\alpha',\beta')\in\mathcal M(g(A),g(B))}
E(\alpha',\beta').
\]
Since $g$ is a homeomorphism, $g(\cl S)=\cl(g(S))$ for every set
$S\subseteq\R^2$.
Applying this to
\[
S=\bigcup_{(\alpha,\beta)\in\mathcal M(A,B)}E(\alpha,\beta)
\]
yields
\[
g\bigl(R_{\mathrm{FW}}(A,B)\bigr)
=
g\!\Bigl(\cl\!\Bigl(\bigcup_{\mathcal M(A,B)}E(\alpha,\beta)\Bigr)\Bigr)
=
\cl\!\Bigl(\bigcup_{\mathcal M(g(A),g(B))}E(\alpha',\beta')\Bigr)
=
R_{\mathrm{FW}}(g(A),g(B)).
\]
\end{proof}

\bibliographystyle{chicago}
\bibliography{references}

\end{document}